\DeclareFontFamily{OT1}{pzc}{}
\DeclareFontShape{OT1}{pzc}{m}{it}{<-> s * [1.10] pzcmi7t}{}
\DeclareMathAlphabet{\mathscr}{OT1}{pzc}{m}{it}
\renewcommand{\subsection}[1]{\hspace{-\parindent}\refstepcounter{subsection}{\bf
(\arabic{section}\alph{subsection}) #1.}}
\numberwithin{equation}{section}
\newcommand{\R}{\mathbb{R}}
\newcommand{\C}{\mathbb{C}}
\newcommand{\Z}{\mathbb{Z}}
\newcommand{\g}{\mathfrak{g}}
\newcommand{\OO}{\mathcal{O}}
\newcommand{\iso}{\cong}
\newcommand{\htp}{\simeq}
\newcommand{\pabla}{\nabla\mkern-13mu/\mkern3mu}
\newcommand{\BM}{\mathbf{M}}
\newcommand{\Bphi}{\boldsymbol\phi}
\newcommand{\Bmod}{\mathbf{mod}}
\newcommand{\Bperf}{\mathbf{perf}}
\newcommand{\BPerf}{\mathbf{Perf}}
\newcommand{\BK}{\mathbf{K}}
\newcommand{\BD}{\mathbf{D}}
\newtheorem{definition}{Definition}[section]
\newtheorem{lemma}[definition]{Lemma}
\newtheorem{proposition}[definition]{Proposition}
\newtheorem{corollary}[definition]{Corollary}
\newtheorem{remark}[definition]{Remark}
\newtheorem{example}[definition]{Example}
\newtheorem{theorem}[definition]{Theorem}
\begin{document}

\title{Lagrangian homology spheres in $(A_m)$ Milnor fibres\\ via $\C^*$-equivariant $A_\infty$-modules}
\author{Paul Seidel}
\date{}

\maketitle
\begin{abstract}\hspace{-2em}
We establish restrictions on Lagrangian embeddings of spheres, and more generally rational homology spheres, into certain open symplectic manifolds, namely the $(A_m)$ Milnor fibres of odd complex dimension. This relies on general considerations about equivariant objects in module categories (which may be applicable in other situations as well), as well as results of Ishii-Ueda-Uehara concerning the derived categories of coherent sheaves on the resolutions of $(A_m)$ surface singularities.
\end{abstract}

\section{Introduction}

\subsection{Lagrangian spheres}
By the $n$-dimensional $(A_m)$ Milnor fibre, we mean the complex hypersurface $Q_m^n \subset \C^{n+1}$ defined by the equation
\begin{equation} \label{eq:an-equation}
x_1^2 + \cdots + x_n^2 + x_{n+1}^{m+1} = 1.
\end{equation}
The topology of these manifolds is described by classical singularity theory (see for instance \cite{milnor68} or the more recent \cite{russian}). $Q_m^n$ is homotopy equivalent to a wedge of $m$ spheres of dimension $n$. The intersection pairing on $H_n(Q_m^n) \iso \Z^m$ is given by
\begin{equation} \label{eq:intersection-form}
(-1)^{n/2} \begin{pmatrix}
2 & -1 \\ -1 & 2 & -1 \\ & -1 & 2 & -1 & \\
&&-1 & \dots 
\end{pmatrix} 
\end{equation}
if $n$ is even, respectively
\begin{equation} \label{eq:skew-form}
\begin{pmatrix} 0 & 1 \\ -1 & 0 & 1 \\ & -1 & 0 & 1 \\ && -1 & \dots &
\end{pmatrix}
\end{equation}
if $n$ is odd. Now let's turn $Q_m^n$ into a (real) symplectic $2n$-manifold in the standard way, by using the restriction of the constant K{\"a}hler form on $\C^{n+1}$. These manifolds have become a test case for general techniques in symplectic topology. We will not consider the results that have been obtained specifically for $m = 1$, since those belongs to the context of cotangent bundles, $Q_1^n \iso T^*\!S^n$. With that excluded, relevant papers are \cite{seidel98b, khovanov-seidel98, seidel04, ritter09, evans11, abouzaid-smith11}. Continuing that tradition, we will derive a topological restriction on Lagrangian submanifolds:

\begin{theorem} \label{th:primitive}
Let $L \subset Q_m^n$, $n \geq 2$, be a Lagrangian submanifold which is a rational homology sphere and {\em Spin}. Then its homology class $[L] \in H_n(Q_m^n;\Z) \iso \Z^m$ is primitive (nonzero and not a multiple).
\end{theorem}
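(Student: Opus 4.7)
When $n$ is even the result is essentially topological: the symplectic normal bundle of an oriented Lagrangian $L$ is isomorphic to $T^*L$, so $[L]\cdot[L] = \pm\chi(L)$; for a rational homology $n$-sphere with $n$ even this is $\pm 2$. Writing $[L]=kv$ with $v$ primitive and $k\ge 1$, the pairing \eqref{eq:intersection-form} yields $k^2(v\cdot v)=\pm 2$, forcing $k=1$. In odd dimensions the form \eqref{eq:skew-form} vanishes on the diagonal, so a genuine symplectic-topological argument is required; this is the substantial case.

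For $n$ odd I would work inside the Fukaya category $\mathcal F(Q_m^n)$, which by \cite{seidel04} is split-generated by the standard $A_m$-chain $V_1,\ldots,V_m$ of vanishing cycles. Set $A_n:=\mathrm{End}_{\mathcal F}(V_1\oplus\cdots\oplus V_m)$, a $\Z$-graded $A_\infty$-algebra; then the Fukaya category embeds fully faithfully in twisted complexes over $A_n$, and $K_0$ of the latter recovers $H_n(Q_m^n;\Z)\iso\Z^m$. Because $L$ is a rational homology sphere with $n\ge 2$, any primitive of the symplectic form restricts to an exact form on $L$, so $L$ is automatically exact; because $L$ is Spin, Floer theory is well defined over $\mathbb{Q}$ and yields $HF^*(L,L;\mathbb{Q})\iso H^*(L;\mathbb{Q})\iso\mathbb{Q}\oplus\mathbb{Q}[-n]$. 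Thus $L$ represents a \emph{spherical object} $\mathcal L$ over $A_n$, and the task reduces to showing that the class of every such object is primitive in $\Z^m$.

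The crux, and the main obstacle, is to reduce this classification of spherical objects over $A_n$ to the corresponding two-dimensional problem. Here the paper's $\C^*$-equivariant $A_\infty$-module formalism enters: $A_n$ should be describable as (a small model of) the $n=2$ algebra $A_2$ with a formal generator of suitable degree adjoined, so that $A_n$-modules correspond to $\C^*$-equivariant $A_2$-modules, and spherical objects correspond to $\C^*$-equivariant spherical ones. Homological mirror symmetry for the $(A_m)$ surface case identifies $A_2$-modules with $D^b(\mathrm{Coh}\,\widetilde X)$ for $\widetilde X$ the minimal resolution of the $(A_m)$ surface singularity, and Ishii--Ueda--Uehara have classified the spherical objects on $\widetilde X$: up to shift they all lie in the braid-group orbit (under spherical twists) of the exceptional $(-2)$-curves. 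Lifting their classification through the equivariant formalism, every spherical object over $A_n$ is, up to shift, in the braid-group orbit of $V_1,\ldots,V_m$.

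To conclude, the braid group acts on $K_0\iso H_n(Q_m^n;\Z)\iso\Z^m$ via the reflection representation of the Weyl group of the root system $A_m$; this group permutes roots, and every root of $A_m$ is primitive (being an integer conjugate of a simple root). Since $[V_i]$ is a simple root, $[L]=[\mathcal L]$ is a root, hence primitive. The principal difficulty to expect is in the equivariant transport of Ishii--Ueda--Uehara's theorem: one must verify that every $\C^*$-equivariant spherical object genuinely arises from a non-equivariant spherical object on $\widetilde X$ with a compatible equivariant structure, and that the braid-group autoequivalences lift compatibly along the equivariance; the compatibility of gradings and twists there is the technical heart of the argument.
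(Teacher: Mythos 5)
Your overall strategy -- reduce to the Fukaya-categorical statement, make the spherical object $\C^*$-equivariant, transfer to the $n=2$ algebra, and quote Ishii--Ueda--Uehara -- is indeed the paper's strategy, but two of your steps are misstated in ways that hide real gaps. First, it is not true that ``$A_n$-modules correspond to $\C^*$-equivariant $A_2$-modules'': only the \emph{bigraded} (equivariant) module categories over $A_m^n$ and $A_m^2$ are identified, essentially by rescaling the extra grading (Remark \ref{th:scaling}), while a general object of $A_m^{n,\mathit{perf}}$ admits no equivariant structure at all. The substantive point is to show that the particular module coming from $L$ \emph{can} be made equivariant, and this is exactly where the hypotheses enter: rational homology sphere plus {\em Spin} give $\mathit{HF}^*(L,L)\iso H^*(S^n;\C)$ via \eqref{eq:pss}, i.e.\ the object is rigid and simple, and then Theorem \ref{th:main4} (the paper's main technical result, proved through weak actions, the group-cohomology obstruction theory, and strictification) produces the bigraded lift; one must also verify that the lift is perfect (Lemma \ref{th:detect-perfect}, using the central braid element acting by a shift) before transferring to $n=2$. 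Your proposal treats this as a formal dictionary, whereas it is the heart of the matter and the place where the hypotheses on $L$ are actually used.

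Second, your endgame is both stronger than necessary and, as stated, incorrect for the only nontrivial case. Upgrading the IUU classification to ``every spherical object over $A_n$ lies, up to shift, in the braid orbit of the $V_k$'' requires lifting the isomorphism $\phi(\bar M_0)\iso \bar P_1[s]$ from the collapsed category back to the bigraded one (uniqueness of the equivariant structure on an indecomposable up to shift), an argument you do not supply. And for odd $n$ the braid group does not act on $H_n(Q_m^n;\Z)$ through the Weyl group of the $A_m$ root system: the intersection form is the skew form \eqref{eq:skew-form} and the twists act by transvections; primitivity does survive, because these are automorphisms of $\Z^m$ and shifts act by $\pm 1$, but the ``every root is primitive'' argument belongs to the even case. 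Moreover the identification of the geometric class $[L]$ with the $K_0$-class of its module written in the basis $[V_k]$ is asserted rather than proved, and for odd $m$ the skew pairing is degenerate, so it cannot be recovered from intersection numbers with the $V_k$ alone. The paper sidesteps all of this with a weaker conclusion (Lemma \ref{th:algebraic-1}): it produces a single test sphere $L_1$ in the braid orbit of $V_1$ with $\dim_\C \mathit{HF}^*(L,L_1)=1$, and then the elementary Euler characteristic formula \eqref{eq:euler} gives $[L]\cdot[L_1]=\pm 1$, which already forces $[L]$ to be primitive. Your argument can be repaired along the same lines; note also that $m=1$ needs separate treatment (e.g.\ embedding $Q_1^n\hookrightarrow Q_2^n$), since the classification input assumes $m\neq 1$.
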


We also have an intersection statement, like a form of the Arnol'd conjecture but with very weak assumptions:

\begin{theorem} \label{th:arnold}
Let $L_0,L_1$ be Lagrangian submanifolds as in Theorem \ref{th:primitive}. If $[L_0] = [L_1]$ mod $2$, then necessarily $L_0 \cap L_1 \neq \emptyset$.
\end{theorem}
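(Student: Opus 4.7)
The plan is to argue by contradiction: assuming $L_0 \cap L_1 = \emptyset$, the Floer cohomology $HF^*(L_0, L_1)$ vanishes after a small Hamiltonian perturbation, so the task reduces to producing a non-zero class in $HF^*(L_0, L_1)$ under the stated hypotheses. The strategy is to leverage the same algebraic machinery that yields Theorem \ref{th:primitive}: via the $\C^*$-equivariant $A_\infty$-module formalism and the Ishii-Ueda-Uehara description, one associates to each Lagrangian $L_i$ satisfying the hypotheses of Theorem \ref{th:primitive} a spherical object $E_i$ in a suitable algebraic replacement of the Fukaya category (morally, the derived category of coherent sheaves on the resolution of the $(A_m)$ surface singularity, in its $\C^*$-equivariant form), in such a way that $HF^*(L_0, L_1) \iso \mathrm{Ext}^*(E_0, E_1)$ and $[L_i]$ corresponds to the numerical class $[E_i]$, which lives in a lattice isomorphic to the $A_m$ root lattice.

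With this translation in place, Theorem \ref{th:primitive} becomes the statement that each $[E_i]$ is a primitive vector in the root lattice, equivalently an actual root. The combinatorial ingredient to exploit is that in the $A_m$ root system, a root is determined up to sign by its reduction modulo $2$: in the simple-root basis, positive roots have $\{0,1\}$-valued coordinates supported on an interval, so two positive roots with the same mod-$2$ reduction share the same support interval and therefore coincide, while negative roots contribute only the overall sign ambiguity. Hence $[L_0] \equiv [L_1] \pmod 2$ forces $[E_0] = \pm [E_1]$, and after replacing $E_1$ by $E_1[1]$ if necessary we may assume $[E_0] = [E_1]$.

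The decisive step---and the main obstacle---is to upgrade the equality of numerical classes to an isomorphism $E_1 \iso E_0[k]$ in the derived category, after which sphericalness gives $\mathrm{Ext}^*(E_0, E_1) \iso H^*(S^n; \C)[-k] \neq 0$, contradicting the vanishing of Floer cohomology. Here one would lean hardest on the Ishii-Ueda-Uehara classification, which presents every spherical object as a shift of a braid-group translate of the standard spherical generators attached to the exceptional curves; from this description it should follow with a modest amount of additional work that two spherical objects with the same numerical class differ only by an even shift. A subsidiary technical point to monitor throughout is that the \emph{Spin} hypothesis on the $L_i$ is precisely what lets the comparison to the $\C^*$-equivariant algebraic side carry through with matching grading conventions, so that the Floer-Ext identification is legitimately applied.
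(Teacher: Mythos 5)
Your overall skeleton (translate to the $n=2$ Ishii--Ueda--Uehara model, compare numerical classes, deduce nonvanishing of morphisms contradicting $L_0 \cap L_1 = \emptyset$) is the paper's, but the step you yourself flag as decisive is false as stated. Two spherical objects of $D$ with the same numerical class need \emph{not} differ by a shift: the twist functors act on numerical classes only through the Weyl group $S_{m+1}$, so any pure-braid translate, e.g.\ $T_{Z_2}^2(Z_1)$, has the same class as $Z_1$, yet it is not isomorphic to any shift of $Z_1$ (on the mirror symplectic side, $\tau_{V_2}^2(V_1)$ is not isotopic to $V_1$, since their geometric intersection number is $2\,i(V_1,V_2)^2 \neq 0$; equivalently this reflects the faithfulness of the braid group action). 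So ``same class $\Rightarrow$ same object up to even shift'' cannot be extracted from the classification. Fortunately it is also unnecessary: once one knows $[E_0] \equiv [E_1]$ mod $2$, the evenness and definiteness of the form \eqref{eq:intersection-form} already give $[E_0]\cdot[E_1] \equiv 2 \bmod 4$ as in \eqref{eq:mod8}, and after using Proposition \ref{th:ishii-ueda-uehara} to represent the objects by Lagrangian spheres in $Q_m^2$, the Euler characteristic formula \eqref{eq:euler} turns this nonvanishing intersection number into $\mathit{HF}^* \neq 0$. This purely numerical argument is exactly what Lemma \ref{th:algebraic-2} does, and it also renders your root-system combinatorics (a root is determined up to sign by its mod $2$ reduction) superfluous.

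There is a second gap in the transfer of classes. The assertion that ``$[L_i]$ corresponds to the numerical class $[E_i]$'' is stronger than what the equivariance machinery provides: for odd $n$ (the only nontrivial case) the reduction to $n=2$ goes through bigraded lifts and the regrading of Remark \ref{th:scaling}, which preserves only \emph{total dimensions} of morphism spaces via \eqref{eq:collapse-2}, not Euler characteristics. Hence all that survives of $[L_i]$ is the collection of parities $\dim \mathit{HF}^*(V_k,L_i) \equiv [V_k]\cdot[L_i] \bmod 2$. To convert ``$[V_k]\cdot[L_0] \equiv [V_k]\cdot[L_1] \bmod 2$ for all $k$'' into ``$[E_0] \equiv [E_1] \bmod 2$'' one needs the intersection form to be nondegenerate mod $2$, i.e.\ its determinant $\pm(m+1)$ to be odd, i.e.\ $m$ even; for odd $m$ the paper first embeds $Q_m^n \hookrightarrow Q_{m+1}^n$ using Lemma \ref{th:plumbing}. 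Your proposal does not address this parity issue at all. Finally, the aside ``primitive, equivalently an actual root'' is incorrect (primitive vectors of the $A_m$ lattice need not be roots) and is not what is used: on the $n=2$ side one gets $[E_i]\cdot[E_i] = -2$ directly from the Euler characteristic, which is what forces the class to be a root; but as explained above, even this is not needed once one argues numerically with \eqref{eq:mod8}.
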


This yields an upper bound (depending on $m$) for the number of pairwise disjoint Lagrangian submanifolds of the relevant type. Note that for even $n$, both statements have elementary proofs. The first one holds because then, $[L] \cdot [L] = (-1)^{n/2} \chi(L) = (-1)^{n/2} 2$. For the second one, note that \eqref{eq:intersection-form} is even. Hence, if $[L_1] = [L_0]$ mod $2$, then
\begin{equation} \label{eq:mod8}
0 = ([L_1] - [L_0]) \cdot ([L_1] - [L_0]) = (-1)^{n/2} 4 - 2 [L_0] \cdot [L_1] \text{ mod } 8,
\end{equation} 
which implies that $[L_0] \cdot [L_1] = 2$ mod $4$. One can use the definiteness of \eqref{eq:intersection-form} to analyze the possible homology classes in more detail, but that is not required for our purpose.

More importantly, for $m = 2$ and any $n \geq 2$, Theorem \ref{th:primitive} is a consequence of the following stronger result of Abouzaid-Smith \cite[Corollary 1.5]{abouzaid-smith11}:

\begin{theorem}[Abouzaid-Smith] \label{th:abouzaid-smith}
Any closed Lagrangian submanifold $L \subset Q_2^n$, $n \geq 2$, whose Maslov class vanishes is an integral homology sphere, and has primitive homology class.
\end{theorem}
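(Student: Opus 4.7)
The plan is to study the Fukaya category $\mathcal{F}(Q_2^n)$ and exploit the fact that, for $m = 2$, it is extremely rigid. First, I would establish that $\mathcal{F}(Q_2^n)$ is split-generated by the two Lagrangian spheres $V_1, V_2$ arising as vanishing cycles of the Lefschetz fibration obtained by projecting \eqref{eq:an-equation} onto the coordinate $x_{n+1}$. This is the symplectic-geometric input, and typically follows from the long exact sequence for Dehn twists together with a generation criterion for Fukaya categories of Lefschetz fibrations (in the spirit of Seidel and of Abouzaid).

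Next, I would identify the $A_\infty$-endomorphism algebra $\bigoplus_{i,j} HF^*(V_i, V_j)$ with a known algebraic object. Up to grading shifts dictated by $n$, this algebra is an $A_\infty$-enhancement of the preprojective algebra of the $A_2$ quiver; by work of Ishii--Ueda--Uehara, its perfect derived category is equivalent to the bounded derived category of coherent sheaves on the minimal resolution $Y$ of the $(A_2)$ surface singularity. Combined with split-generation, this yields an $A_\infty$-equivalence between the split-closed derived Fukaya category of $Q_2^n$ and $D^b\mathrm{Coh}(Y)$.

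With this equivalence in hand, a closed exact Lagrangian $L \subset Q_2^n$ with vanishing Maslov class defines an object $\mathcal{L}$ on the mirror. Its self-Floer cohomology $HF^*(L,L) \iso H^*(L;\Z)$ is identified with $\bigoplus_k \mathrm{Ext}^k(\mathcal{L},\mathcal{L})$, and its homology class $[L] \in H_n(Q_2^n;\Z) \iso \Z^2$ maps to the numerical class $[\mathcal{L}]$ in the Grothendieck group of $Y$, the two $\Z$-modules being matched compatibly with the respective Euler forms. The theorem then reduces to an algebraic assertion: any object of $D^b\mathrm{Coh}(Y)$ whose graded endomorphism algebra is isomorphic to $H^*(S^n;\Z)$ is a spherical object in the sense of Seidel--Thomas, has primitive numerical class, and its underlying total cohomology realises that of a homology sphere. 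This would follow from the classification by Ishii--Ueda--Uehara, which shows all such objects lie in the braid-group orbit of the structure sheaves $\OO_{C_1}, \OO_{C_2}$ of the exceptional $(-2)$-curves under the associated spherical twists.

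The hard part is twofold. First, to obtain an \emph{integral} (rather than rational or $\Z/2$) conclusion, one must work with integer-coefficient Floer theory, which requires careful attention to signs and demands orientability of $L$ (automatic here from Maslov-zero and $n \geq 2$); upgrading the mirror equivalence to one defined over $\Z$, so that torsion in $H^*(L;\Z)$ is detected by the algebraic side, is the main technical task on the symplectic front. Second, the algebraic classification of sphere-like objects on $Y$ must be robust enough to exclude extensions and twisted complexes whose graded endomorphism algebra happens to look like $H^*(S^n;\Z)$ but whose numerical class is divisible or zero; establishing this rigidity is essentially the content of the Ishii--Ueda--Uehara analysis, and is the step least amenable to soft arguments.
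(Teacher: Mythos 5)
You should first be aware that this statement is not proved in the paper at all: it is quoted from Abouzaid--Smith \cite[Corollary 1.5]{abouzaid-smith11} as background, and their proof is genuinely different from the route you sketch (it does not pass through Ishii--Ueda--Uehara). The closest internal analogue is Theorem \ref{th:primitive}, which is strictly weaker (it assumes $L$ is a rational homology sphere and {\em Spin}), and comparing your outline with how that theorem is proved exposes two real gaps in your plan. First, the equivalence you invoke between the split-closed derived Fukaya category of $Q_2^n$ and $D^b\mathit{Coh}(Y)$ of the minimal resolution only exists for $n=2$. For general $n$ the endomorphism algebra of $V_1\oplus V_2$ is $A_2^n$, with morphisms in degrees $0$ and $n$, and the categories $A_2^{n,\mathit{perf}}$ for different $n$ are {\em not} related ``up to grading shifts'': as Remark \ref{th:scaling} stresses, there is no direct relationship between dg modules over a graded algebra and over its degree-rescaled version; Lemma \ref{th:mckay} identifies the sheaf category with $A_2^{2,\mathit{perf}}$ only. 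Bridging $n$ and $2$ is precisely the content of the $\C^*$-equivariance machinery (Theorem \ref{th:main4}, lifting an object to the bigraded category $A_2^{n,\Bperf}$ and then rescaling), and that lifting itself requires the object to be rigid and simple, i.e.\ to already have two-dimensional endomorphisms.

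Second, and more fundamentally, your reduction is circular with respect to the main claim. The hard content of the Abouzaid--Smith theorem is that an {\em arbitrary} Maslov-zero closed Lagrangian $L$ is an integral homology sphere. Since $\mathit{HF}^*(L,L)\iso H^*(L)$, your ``algebraic assertion'' about objects whose graded endomorphism algebra is $H^*(S^n;\Z)$ presupposes exactly this conclusion; nothing in your outline bounds $\dim \mathit{HF}^*(L,L)$ or the Ext-algebra of the corresponding sheaf. The Ishii--Ueda--Uehara classification (Proposition \ref{th:ishii-ueda-uehara}) applies only to objects with total endomorphism dimension $\leq 2$, so it cannot supply that bound --- which is why the paper's own Theorem \ref{th:primitive} takes the rational-homology-sphere condition as a hypothesis, and why Abouzaid--Smith's proof of the stronger statement proceeds by a different argument (classification of modules over the relevant algebra, not the sheaf-theoretic classification). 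In addition, to get conclusions over $\Z$ you need $L$ to define an object of a Fukaya category with $\Z$ coefficients, hence a (relative) {\em Spin} structure, which the hypotheses do not provide; establishing orientability/Spin and integrality is part of what Abouzaid--Smith actually prove, not a technical footnote. As it stands, your proposal would at best reprove a weakened form of Theorem \ref{th:primitive} for $m=2$, and even for that it is missing the equivariant lifting step.
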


Abouzaid-Smith's results are actually even sharper, and also imply the $m = 2$ case of Theorem \ref{th:arnold}. We should say that our point of view is fundamentally similar to theirs, in that we approach the problem via Fukaya categories (in other respects, our argument is quite different). One useful feature of the Fukaya category is that it admits objects which are Lagrangian submanifolds equipped with flat complex vector bundles. By applying the same ideas to those, one obtains the following:

\begin{theorem} \label{th:abelian}
Let $L \subset Q_m^n$, $n \geq 2$, be a Lagrangian rational homology sphere which is {\em Spin}. Then there is no homomorphism $\rho: \pi_1(L) \rightarrow \C^*$ such that the associated twisted cohomology is acyclic, $H^*(L;\rho) = 0$.
\end{theorem}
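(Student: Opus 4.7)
The plan is to parallel the argument used for Theorem~\ref{th:primitive}, replacing the object $L$ by $(L,\rho)$ in the Fukaya-categorical setup.

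First, I would regard $(L,\rho)$ as a nonzero object in an enlarged Fukaya category $\mathcal{F}(Q_m^n)$ whose objects are Spin Lagrangians equipped with flat complex line bundles. Because $L$ is a rational homology sphere, a localization argument via a Morse function on $L$ (combined with vanishing of Floer differentials for Spin and dimensional reasons) identifies $HF^*((L,\mathbf{1}),(L,\rho))$, up to a shift, with the twisted cohomology $H^*(L;\rho)$, which vanishes by assumption. Observe that $(L,\rho)$ is still nonzero in $\mathcal{F}(Q_m^n)$: its endomorphism algebra remains $H^*(L;\C)\neq 0$.

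Next, by the Ishii-Ueda-Uehara identification, the subcategory of $\mathcal{F}(Q_m^n)$ generated by the vanishing cycles $V_1,\dots,V_m$ is quasi-equivalent to $\mathbf{perf}(A)$ for an $A_\infty$-algebra $A$ quasi-isomorphic to the endomorphism algebra of a tilting object on the minimal resolution $\tilde X_m$. The algebra $A$ carries a natural weight grading, inducing a $\C^*$-action on its module category. Both $(L,\mathbf{1})$ and $(L,\rho)$ correspond to perfect modules $M_\mathbf{1}$ and $M_\rho$, and the Floer vanishing above becomes $\mathrm{Hom}^*_{\mathbf{perf}(A)}(M_\mathbf{1},M_\rho)=0$. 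In parallel to the proof of Theorem~\ref{th:primitive}, I would construct a $\C^*$-equivariant enhancement $\mathbf{M}_\rho$ of $M_\rho$, obtained by combining the weight grading on $A$ with a suitable $A_\infty$-automorphism of $M_\mathbf{1}$ associated to the character $\rho$. The general theory of $\C^*$-equivariant $A_\infty$-modules developed earlier in the paper should then force a weight-graded version of $\mathrm{Hom}(\mathbf{M}_\mathbf{1},\mathbf{M}_\rho)$ to detect a nontrivial numerical invariant --- essentially a refinement of the pairing of $[L]\in H_n(Q_m^n;\Z)$ with the classes of the $V_i$ --- and total vanishing of the ungraded $\mathrm{Hom}$ would then be incompatible with non-vanishing of one of its weight components, yielding the desired contradiction.

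The main obstacle is the construction of the equivariant enhancement $\mathbf{M}_\rho$. Since $H_1(L;\Z)$ is torsion, $\rho$ is a character of finite order and there is no honest one-parameter family of local systems on $L$ to use as input. The equivariance must instead be reconstructed intrinsically, from the weight grading on $A$ together with a choice of lift of $\rho$ to an $A_\infty$-automorphism of the underlying module. Once that lift is in place, I expect the rest to follow from the same general tools about $\C^*$-equivariant $A_\infty$-modules that drive the proof of Theorem~\ref{th:primitive}; Theorem~\ref{th:abelian} is, in effect, the application of that machinery to a Lagrangian decorated with a nontrivial flat line bundle, and the finite-order nature of $\rho$ is precisely what the equivariant framework is designed to handle.
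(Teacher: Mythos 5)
Your starting point matches the paper's: compare the two objects $L_0=(L,\mathbf{1})$ and $L_1=(L,\rho)$ of $\mathit{Fuk}(Q_m^n)$ and aim to contradict $\mathit{HF}^*(L_0,L_1)\iso H^*(L;\rho)=0$, which is exactly \eqref{eq:pss}. The gap lies in how you propose to force this group to be nonzero. First, the equivariant enhancement is not the obstacle you make it out to be: since $\rho$ has rank one and $L$ is a rational homology sphere, both $M_{\mathbf{1}}$ and $M_\rho$ have endomorphism ring $H^*(S^n;\C)$, hence are rigid and simple, and Theorem \ref{th:main4} applies to each of them separately; no lift of $\rho$ to an $A_\infty$-automorphism, and no appeal to $\rho$ having finite order, is needed or used --- the relevant $\C^*$-action comes from the internal grading of $A_m^n$ (Example \ref{th:homogeneous}) and has nothing to do with $\rho$. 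Second, and more seriously, your proposed contradiction --- that total vanishing of the ungraded Hom is incompatible with nonvanishing of some weight component --- cannot work: by \eqref{eq:collapse-2} the bigraded morphism groups inject into (and, for perfect modules, sum to) the ungraded one, so vanishing of the latter forces vanishing of every weight piece; equivariance by itself creates no tension. Nor can a ``numerical invariant'' refining the pairing of $[L]$ with the $[V_i]$ suffice, since for odd $n$ one has $\chi(\mathit{HF}^*(L_0,L_1))=\pm\,[L]\cdot[L]=0$.

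What actually yields the nonvanishing in the paper is Lemma \ref{th:algebraic-2}, the algebraic form of Theorem \ref{th:arnold}: equivariance together with Lemma \ref{th:detect-perfect} and Remark \ref{th:scaling} reduces both spherical objects to the case $n=2$; Proposition \ref{th:ishii-ueda-uehara} (Ishii--Ueda--Uehara) identifies them, up to the braid group action, with honest Lagrangian spheres in $Q_m^2$; the hypothesis enters through \eqref{eq:euler}, giving $\dim\mathit{HF}^*(V_k,L_0)\equiv\dim\mathit{HF}^*(V_k,L_1)$ mod $2$ for all $k$, and since the form \eqref{eq:intersection-form} has odd determinant for even $m$, one gets $[L_0]\equiv[L_1]$ mod $2$ and then $[L_0]\cdot[L_1]\neq 0$ via \eqref{eq:mod8}; odd $m$ is handled first by embedding $Q_m^n\hookrightarrow Q_{m+1}^n$ (Lemma \ref{th:plumbing}). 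None of these steps --- reduction to $n=2$, appeal to the classification, the parity-of-determinant argument, the reduction to even $m$ --- appears in your proposal, and without them (or without the alternative route through the Cardy condition, Proposition \ref{th:cardy} and Lemma \ref{th:integral}, sketched in Section \ref{sec:alternative}) the claimed nonvanishing of $\mathit{HF}^*(L_0,L_1)$ is unsupported.
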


\begin{theorem} \label{th:nonabelian}
Let $L \subset Q_m^n$, $n \geq 2$, be a closed Lagrangian submanifold which is {\em Spin}. Then there is no homomorphism $\rho: \pi_1(L) \rightarrow \mathit{GL}(r,\C)$, $r>1$, such that
\begin{equation} \label{eq:twisted-endomorphisms}
H^k(L;\mathit{End}(\rho)) = \begin{cases} \C & k = 0,n, \\ 0 & \text{otherwise.}
\end{cases}
\end{equation}
\end{theorem}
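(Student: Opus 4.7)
The plan closely parallels the proofs of Theorems \ref{th:primitive} and \ref{th:abelian}, with the extra ingredient that a nonabelian local system comes equipped with a scalar $\C^*$-symmetry that I would exploit through the equivariant module machinery announced in the title. First I would associate to $(L,\rho)$ an object $E$ in a suitable version of the Fukaya category $\mathcal{F}(Q_m^n)$, with the Spin structure and a grading making $\rho$ into valid Floer-theoretic data. The hypothesis \eqref{eq:twisted-endomorphisms} then becomes
\[
HF^*(E,E) \iso H^*(L;\mathit{End}(\rho)) \iso H^*(S^n;\C),
\]
so $E$ is spherical. The $H^0$ condition is Schur irreducibility of $\rho$, so the center $\C^* \subset \mathit{GL}(r,\C)$ acts faithfully by automorphisms of $\rho$, equipping $E$ with a canonical $\C^*$-equivariant structure.

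The next step is to transport this picture across the mirror. Following Ishii-Ueda-Uehara and the surrounding Fukaya-categorical computations, $D^\pi\mathcal{F}(Q_m^n)$ (or its relevant subcategory) should be equivalent to the derived category of $A_\infty$-modules over an $(A_m)$-type quiver algebra. The object $E$ corresponds to a spherical $A_\infty$-module $M$ carrying a $\C^*$-action. The paper's general theorem on equivariant $A_\infty$-modules should then decompose $M$ into $\C^*$-weight components, with the rank-$r$ structure of $\rho$ forcing the underlying non-equivariant module to split into $r$ nonzero summands whose sum retains the spherical pattern. Applying the Ishii-Ueda-Uehara classification of spherical objects on the minimal resolution of the $(A_m)$ surface singularity---all of which are, up to shift, rank-one sheaves on chains of $(-2)$-curves---then excludes the case $r>1$, giving the theorem.

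The main obstacle is the calibration of the equivariant/mirror framework: one has to show that the linear-algebraic rank of $\rho$ is faithfully reflected as a $\C^*$-weight multiplicity on the $A_\infty$-module side, and that the spherical condition survives the equivariant decomposition in a form strong enough to contradict the rigidity of spherical objects on the $(A_m)$ resolution. Once this bridge is in place, the contradiction with the Ishii-Ueda-Uehara classification is essentially formal. A secondary concern, shared with Theorems \ref{th:primitive} and \ref{th:abelian}, is the dimensional mismatch between the $2n$-real-dimensional Milnor fibre and the two-complex-dimensional mirror on which the classification lives, but I would take this to be handled by whichever suspension or bulk-deformation mechanism makes the earlier theorems of the paper go through.
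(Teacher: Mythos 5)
Your first step is sound: by \eqref{eq:pss}, the hypothesis \eqref{eq:twisted-endomorphisms} makes the object $(L,\rho)$ of the Fukaya category spherical, and the equivariance machinery (Theorem \ref{th:main4}) together with the Ishii--Ueda--Uehara classification is indeed what gets applied to it. But the mechanism you propose for excluding $r>1$ does not work. The central $\C^*\subset \mathit{GL}(r,\C)$ acts on the object only by scalar automorphisms, which every object possesses; it has nothing to do with the relevant $\C^*$-action, which lives on the algebra $A_m^n$ (rescaling its grading, as in Example \ref{th:homogeneous}) and for which equivariance of the module is obtained from Theorem \ref{th:main4} using only rigidity and simplicity \eqref{eq:exceptional} --- exactly as available when $r=1$. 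Worse, the claimed splitting of the underlying module into $r$ nonzero summands contradicts simplicity: $H^0$ of the endomorphisms being $\C$ means the object is indecomposable, so no weight decomposition can exhibit the rank. And the rank of $\rho$ leaves no trace on the quasi-isomorphism class of the module at all: the object $(L,\rho)$ satisfying \eqref{eq:twisted-endomorphisms} is simply another spherical object, and Proposition \ref{th:ishii-ueda-uehara} places it in the same braid-group orbit as the rank-one objects rather than excluding it. So no contradiction can be extracted from the classification alone, which is precisely the ``calibration'' you flag as the main obstacle; that bridge does not exist.

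Where the rank actually enters in the paper is through a numerical invariant, not an equivariant one. Lemma \ref{th:algebraic-1} (whose proof lifts the module via Theorem \ref{th:main4}, passes to $n=2$ by rescaling the second grading as in Remark \ref{th:scaling} --- not by any suspension of the geometry --- applies Proposition \ref{th:ishii-ueda-uehara} there, and transfers back) produces a Lagrangian sphere $L_1$, obtained from the $V_k$ by the braid action, with $\dim \mathit{HF}^*((L,\rho),L_1)=1$. The Euler characteristic formula \eqref{eq:euler} then gives $\chi(\mathit{HF}^*((L,\rho),L_1)) = \pm\, r\,[L]\cdot[L_1]$, which is divisible by $r$, while a one-dimensional group has $\chi=\pm 1$; this is the contradiction for $r>1$. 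In short: the classification is used to manufacture a test object with odd, in fact unit, total Floer dimension, and the rank is detected by the intersection-number/Euler-characteristic identity \eqref{eq:euler}, not by any $\C^*$-weight multiplicity.
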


Both statements are empty for even $n$, because the conditions imposed can't be satisfied for Euler characteristic reasons. Also, even though we have not stated that explicitly, Theorem \ref{th:nonabelian} is again only a statement about rational homology spheres, since 
$H^*(L;\mathit{End}(\rho))$ always contains $H^*(L;\C)$ as a direct summand.

\begin{example}
Consider the lowest nontrivial dimension $n = 3$. There, Theorems \ref{th:abelian} and \ref{th:nonabelian} together say that $L$ can't be a spherical space form, which means $S^3/\pi$ for finite $\pi \subset \mathit{SO}(4)$ acting freely (the {\em Spin} assumption being automatically true in this dimension). Namely, if $\pi$ is abelian then $L$ would be a lens space, to which Theorem \ref{th:abelian} applies; and if $\pi$ is nonabelian, it has an irreducible representation of rank $>1$, to which Theorem \ref{th:nonabelian} applies. 
\end{example}

\begin{example}
Continuing the previous discussion, Theorem \ref{th:nonabelian} also rules out all rational homology $3$-spheres which are hyperbolic. To see that, recall that the hyperbolic structure on a closed three-manifold $L$ gives rise to a map $\bar\rho: \pi_1(L) \rightarrow \mathit{PSL}(2,\C)$ which can be lifted to $\rho: \pi_1(L) \rightarrow \mathit{SL}(2,\C)$. Then
%\begin{equation}
$H^*(L;\mathit{End}(\rho)) \iso H^*(L;\C) \oplus H^*(L;\mathit{Ad}(\bar\rho))$,
%\end{equation}
and the second summand vanishes by Weil's infinitesimal precursor of Mostow rigidity (see for instance \cite{mueller10} for a statement and further references). However, this particular consequence is not new: since the complex three-dimensional $(A_m)$ Milnor fibre admits a dilation \cite[Example 7.4]{seidel-solomon10}, it can't contain any closed Lagrangian submanifold which is a $K(\pi,1)$ \cite[Corollary 6.3]{seidel-solomon10}.
\end{example}

We will now explain the overall nature of our argument. Let $\mathit{Fuk}(Q_m^n)$ be the Fukaya category. It is known (see \cite[Section 20]{seidel04}, which is largely based on \cite{khovanov-seidel98,seidel-thomas99}) that a certain formal enlargement, the split-closed derived category, which we here denote by $\mathit{Fuk}(Q_m^n)^{\mathit{perf}}$, can be explicitly described, as the derived category $A_m^{n,\mathit{perf}}$ of perfect dg modules over a certain finite-dimensional algebra $A_m^n$. Problems about Lagrangian spheres can therefore be approached in a purely algebraic way by looking at spherical objects in $A_m^{n,\mathit{perf}}$. For $n = 2$, those objects have been completely classified by Ishii, Ueda and Uehara \cite{ishii-uehara05, ishii-ueda-uehara06} (for another application of this classification in symplectic topology, see the very recent \cite{new}). Unfortunately, there is no straightforward relation between the categories $A_m^{n,\mathit{perf}}$ for different $n$. In contrast, if one considers the derived categories $A_m^{n,\Bmod}$ of {\em complexes of graded modules} over $A_m^n$, and appropriate subcategories $A_m^{n,\Bperf}$ of perfect objects inside those, the dependence on $n$ is minimal. Finally, concerning the relation between $A_m^{n,\Bperf}$ and $A_m^{n,\mathit{perf}}$, it is an elementary algebraic observation that the first category can be viewed as ``bigraded'' or ``equivariant'' (for a certain action of $\C^*$) refinement of the second one. Given that, the missing ingredient is a way of making objects equivariant, and we will now turn to that question in a fairly general context.

\subsection{Equivariant objects}
As motivation, consider the following situation. Let $X$ be a smooth projective variety over $\C$, carrying an action of $\C^*$. We denote by $D^b\mathit{Coh}(X)$ the bounded derived category of coherent sheaves, and by $D^b\mathit{Coh}_{\C^*}(X)$ its equivariant analogue (for consistency with the rest of the paper, we actually want to consider the underlying dg categories, but still use the classical notation). In \cite[Lemma 2.2]{polishchuk08}, it was pointed out that the existence of moduli stacks for a suitable class of objects in $D^b\mathit{Coh}(X)$ \cite{inaba02,lieblich06} together with a gluing theorem \cite[Theorem 3.2.4]{beilinson-bernstein-deligne} implies the following:

\begin{theorem}[Polishchuk] \label{th:polishchuk}
Let $E$ be an object of $D^b\mathit{Coh}(X)$ which is rigid and simple, meaning that
\begin{equation} \label{eq:exceptional}
\left\{
\begin{aligned}
& H^1(\mathit{hom}_{D^b\mathit{Coh}(X)}(E,E)) = 0, \\
& H^0(\mathit{hom}_{D^b\mathit{Coh}(X)}(E,E)) \iso \C.
\end{aligned}
\right.
\end{equation}
Suppose in addition that $H^i(\mathit{hom}_{D^b\mathit{Coh}(X)}(E,E)) = 0$ for all $i<0$. Then $E$ is quasi-isomorphic to an object coming from $D^b\mathit{Coh}_{\C^*}(X)$.
\end{theorem}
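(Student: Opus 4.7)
The plan is to construct a $\C^*$-equivariant structure on $E$ directly, by exhibiting an isomorphism $\phi: \sigma^*E \iso \pi^*E$ on $\C^* \times X$ (with $\sigma: \C^* \times X \to X$ the action and $\pi: \C^* \times X \to X$ the projection) and then verifying that $\phi$ satisfies the associated cocycle condition on $\C^* \times \C^* \times X$. The three pillars are the Inaba-Lieblich moduli theory cited in the paragraph above, the vanishing $\mathrm{Pic}(\C^*) = 0$, and the simplicity of $E$.

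First I would invoke the Inaba-Lieblich moduli stack $\mathcal{M}$ of simple, universally gluable perfect complexes on $X$. The hypotheses on $E$ are calibrated to make $[E]$ a classical point of $\mathcal{M}$: the vanishing $H^i(\mathit{hom}(E,E)) = 0$ for $i<0$ combined with $H^0(\mathit{hom}(E,E)) \iso \C$ is precisely the ``simple and universally gluable'' condition of Inaba, while rigidity gives $\mathrm{Ext}^1(E,E) = 0$, so that $[E]$ is an isolated reduced point of the coarse moduli $M$ and $\mathcal{M}$ is \'etale-locally a $B\C^*$-gerbe over a point.

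Now the two families $\sigma^*E$ and $\pi^*E$ yield morphisms $f_\sigma, f_\pi: \C^* \to \mathcal{M}$, both sending $1$ to $[E]$. Since $\C^*$ is connected and $[E]$ is isolated in $M$, both factor through the residual gerbe $B\C^*$ at $[E]$. Morphisms $\C^* \to B\C^*$ are classified by $\mathrm{Pic}(\C^*) = 0$, so $f_\sigma$ and $f_\pi$ are isomorphic as morphisms of stacks. Applying the Beilinson-Bernstein-Deligne gluing theorem to the universal family upgrades this abstract isomorphism to a genuine one $\phi: \sigma^*E \iso \pi^*E$ in $D^b\mathit{Coh}(\C^* \times X)$. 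I would then normalize $\phi$ so that its restriction to $\{1\} \times X$ is $\mathrm{id}_E$, using simplicity to rescale by the inverse scalar $(\phi|_{t=1})^{-1}$.

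Finally, the cocycle obstruction on $\C^* \times \C^* \times X$ compares two isomorphisms between the same pair of objects; by simplicity their ratio is a global unit on $\C^* \times \C^*$, that is, an element of $\C^* \cdot s^\Z t^\Z$. The boundary normalization forces this unit to equal $1$ on $\{1\} \times \C^*$ and on $\C^* \times \{1\}$, hence identically, so $\phi$ is a genuine $\C^*$-equivariant structure on $E$. The step I expect to require the most care is precisely this cocycle analysis: one must verify that simplicity truly cuts the discrepancy down to a \emph{scalar}-valued function on $\C^* \times \C^*$ (i.e.\ that there is no auto-equivalence wiggle room for $E$ beyond $\C^*$) and that the boundary normalization propagates as claimed. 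Once the moduli-theoretic input is black-boxed, the remainder is a clean interplay of rigidity (localizing to a single moduli point), simplicity (shrinking the automorphism group to $\C^*$), and the triviality of $\mathrm{Pic}(\C^*)$.
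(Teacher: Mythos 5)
The paper does not actually prove this theorem: it is quoted from Polishchuk, with only a one-line indication of the ingredients (the Inaba--Lieblich moduli stacks of complexes together with the gluing theorem \cite[Theorem 3.2.4]{beilinson-bernstein-deligne}). Your reconstruction follows exactly that route, and most of it is sound: the hypotheses do make $\sigma^*E$ and $\pi^*E$ into families of simple, universally gluable complexes, hence classifying morphisms $\C^*\rightarrow \mathcal{M}$; rigidity ($\mathit{Ext}^1(E,E)=0$) plus reducedness of $\C^*$ does force both morphisms through the residual gerbe $B\C^*$ at $[E]$; triviality of $\mathit{Pic}(\C^*)$ identifies them; and your cocycle analysis is correct, because by flat base change over the affine base, simplicity gives $\mathit{Hom}(\pi^*E,\pi^*E)\iso \mathcal{O}(\C^*\times\C^*)\cdot \mathit{id}$, the units are $\lambda s^at^b$, and the normalization along the two boundary slices kills them.

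The genuine gap is at the last step, and it is where you have misplaced the BBD input. A $2$-isomorphism of the two classifying morphisms already yields $\phi:\sigma^*E\iso\pi^*E$ -- that is what it means for $\mathcal{M}$ to be a moduli stack of complexes -- so no gluing theorem is needed there. What does require an argument is your concluding sentence: a normalized $1$-cocycle datum $(E,\phi)$ in the plain derived category is not, in general, the same thing as an object in the image of $D^b\mathit{Coh}_{\C^*}(X)=D^b\mathit{Coh}([X/\C^*])$, because descent of objects of derived categories along the smooth cover $X\rightarrow [X/\C^*]$ is not automatic; in principle one needs the full simplicial tower of coherence data over $(\C^*)^r\times X$ for all $r$, not just the cocycle identity on the nose. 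This is precisely where \cite[Theorem 3.2.4]{beilinson-bernstein-deligne} must be spent: the hypothesis $H^i(\mathit{hom}(E,E))=0$ for $i<0$ makes the fibered category of such objects a stack, so the normalized $1$-cocycle is effective and the higher coherences are unique, and only then does one obtain a genuine equivariant object. (This is also why the negative-Ext hypothesis can be dropped in Theorem \ref{th:toen} using homotopical methods, and it is the exact analogue, in the paper's own proof of Theorem \ref{th:main4}, of the passage from a weak action through a homotopy action to a naive one.) So your ingredients are the right ones, but the gluing theorem belongs to the descent step at the end, not to the comparison of the two families.
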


What is surprising, at least at first glance, is that there is a single overall condition \eqref{eq:exceptional} which then implies, a fortiori, equivariance for all the cohomology sheaves $H^i(E)$ (if one restricts to sheaves, which means objects of the abelian category rather than its derived category, the result is easier and more classical; see for instance \cite[Appendix (in the preprint version only)]{bezrukavnikov-finkelberg-ginzburg06}, or the case of bundles on homogeneous spaces mentioned in \cite{bk}). As pointed out to the author by To\"en, the condition on the negative degree endomorphisms in Theorem \ref{th:polishchuk} can be removed by using the more powerful methods of homotopical algebraic geometry. This leads to the following result (unpublished, but follows from techniques in
\cite{toen-vaquie07}):

\begin{theorem}[To\"en] \label{th:toen}
Let $A$ be a differential graded algebra over $\C$ which is proper (has finite-dimensional cohomology) and smooth \cite[Definition 8.1.2]{kontsevich-soibelman06}. Suppose that it carries an action of $\C^*$. Let $A^{\mathit{perf}}$ be the derived category of perfect dg modules (in this case, that is the same as the derived category of dg modules with finite-dimensional cohomology). Let $M$ be a rigid and simple object of $A^{\mathit{perf}}$. Then $M$ is quasi-isomorphic to a $\C^*$-equivariant dg module.
\end{theorem}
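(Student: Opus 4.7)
The plan is to realize $M$ as a $\C$-point of the derived moduli stack of perfect $A$-modules and to show, using rigidity and simplicity, that this point is $\C^*$-fixed. By the main construction of \cite{toen-vaquie07}, smoothness and properness of $A$ produce a locally geometric derived stack $\mathcal{M}_A$ parametrising perfect dg modules, whose tangent complex at a point $E$ is $\mathrm{RHom}_A(E,E)[1]$. The $\C^*$-action on $A$ induces, by pullback, a $\C^*$-action on $\mathcal{M}_A$, and a $\C^*$-equivariant dg module structure on $M$ is precisely a lift of $[M]$ to the fixed-point stack $\mathcal{M}_A^{\C^*}$.

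The local structure of $\mathcal{M}_A$ at $[M]$ is highly constrained. Rigidity gives $H^1\mathrm{RHom}_A(M,M) = 0$, so first-order deformations of $M$ vanish; simplicity gives $H^0\mathrm{RHom}_A(M,M) = \C$, so the automorphism group at $[M]$ is $\C^*$. The classical truncation of $\mathcal{M}_A$ in a neighborhood of $[M]$ is therefore $B\C^*$, and $[M]$ is an isolated classical point. Moreover, the induced $\C^*$-action on this local $B\C^*$ is trivial, as the scalar automorphisms are intrinsic.

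Now consider the orbit morphism $o: \C^* \to \mathcal{M}_A$, $t \mapsto t^*M$, which is algebraic and $\C^*$-equivariant for left translation on the source. Its image lies in the connected component of $[M]$, hence classically in the local $B\C^*$. The resulting map $\C^* \to B\C^*$ classifies a $\C^*$-torsor, i.e.\ a line bundle, on $\C^*$; this is trivial because $\mathrm{Pic}(\C^*) = 0$. Equivalently, a $\C^*$-equivariant map from $\C^*$ (with the translation action) to $B\C^*$ (with the trivial action) is the same as a map from the quotient stack $[\C^*/\C^*] = \mathrm{pt}$ to $B\C^*$, so $o$ factors equivariantly through $\mathrm{pt} \hookrightarrow \mathcal{M}_A$, giving the desired lift of $[M]$ to $\mathcal{M}_A^{\C^*}$.

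The step I expect to require the most care is the passage from the classical truncation to the full derived stack: the higher cohomology groups $H^i\mathrm{RHom}_A(M,M)$ for $i \ge 2$ are unrestricted and could, a priori, produce derived obstructions to the lifting. The remedy, inherent in the Toën-Vaquié framework, is to run the argument on the formal neighborhood of $[M]$, which is the formal moduli problem controlled by the $\C^*$-equivariant dga $\mathrm{RHom}_A(M,M)$ (equivalently, its Maurer-Cartan functor). All higher obstructions then take values in $\C^*$-modules on which the linear reductivity of $\C^*$ permits an inductive cocycle trivialization, completing the equivariant lift.
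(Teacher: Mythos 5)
You should first be aware that the paper does not actually prove Theorem \ref{th:toen}: it is quoted as an unpublished result of To\"en, said to follow from the techniques of \cite{toen-vaquie07}, and the paper instead proves the more general Theorem \ref{th:main4} (no smoothness hypothesis) by an elementary argument in Sections 5--7. Your proposal is an outline of the moduli-stack route that the paper attributes to To\"en, so it is not wrong in spirit; but as written it has a genuine gap at its central step. You claim that rigidity and simplicity force the classical truncation of $\mathcal{M}_A$ near $[M]$ to be $B\C^*$, so that the orbit map $\C^* \to \mathcal{M}_A$ classifies a $\C^*$-torsor on $\C^*$, trivial because $\mathrm{Pic}(\C^*)=0$. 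This ignores the negative self-extension groups $H^i(\mathit{hom}(M,M))$, $i<0$, which are not controlled by rigidity and simplicity. Their possible nonvanishing is exactly what separates Polishchuk's Theorem \ref{th:polishchuk} (which assumes $H^{i<0}=0$, so that an ordinary algebraic $1$-stack of objects suffices and an argument like yours can be run) from To\"en's theorem, where one is forced into higher/derived stacks: the derived automorphism object of $M$ is then a higher group whose homotopy groups in positive degree are the $H^{-i}$, the local model at $[M]$ is not $B\C^*$, the descent datum along the orbit map is not merely a line bundle, and $\mathrm{Pic}(\C^*)=0$ does not dispose of it. So the reduction on which your argument rests fails precisely in the generality the theorem is meant to cover.

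The place where you acknowledge difficulty (the final paragraph) is in fact where essentially all the content lies, and there you only assert the conclusion: one must coherently trivialize an infinite hierarchy of homotopies, and one must also show that a homotopy-fixed point of the $\C^*$-action on the moduli of objects can be rectified to an honest equivariant dg module --- neither is automatic. For comparison, in the paper's elementary proof of Theorem \ref{th:main4} these steps are the bulk of the work: rigidity kills the Killing class so the orbit family over $\C^*$ admits a connection, simplicity plus surjectivity of the exponential map allows one to arrange trivial monodromy and hence a covariantly constant section giving a weak action (Lemma \ref{th:exceptional}); the obstructions to promoting this to a homotopy action are identified, using perfectness via \eqref{eq:compact}, with group cochains in $C^*(G;H^{2-s}(\mathit{hom}_{A^{\mathit{mod}}}(M,M)))$ and vanish by reductivity of $\C^*$ (Lemma \ref{th:cohomology}, Lemma \ref{th:extend}); and an explicit bar-type construction (Lemma \ref{th:strictification}) converts the homotopy action into a naive one on a quasi-isomorphic module, with boundedness/rationality hypotheses checked via minimal models. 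Your sketch would need analogues of all three steps, plus a treatment of the negative-Ext issue above, before it could be considered a proof.
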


To be precise, by an action of $\C^*$ we meant a linear action which is rational (a direct sum of finite-dimensional representations). Similarly, by an equivariant dg-module we mean one which carries a rational action of $\C^*$ compatible with the other structures (later, we will call this a ``naive $\C^*$-action'', to distinguish it from other related concepts). The smoothness assumption on $A$ is required in order to construct finite-dimensional (derived) stacks parametrizing modules. However, one can argue that the structure of general modules should not be relevant for the argument, which only involves $M$ and its pullbacks by the $\C^*$-action. With that in mind, we will prove:

\begin{theorem} \label{th:main4}
Let $A$ be a dg algebra over $\C$ which is proper. Suppose that it carries an action of $\C^*$. Let $M$ be a perfect dg module over $A$ which is rigid and simple. Then $M$ is quasi-isomorphic to a $\C^*$-equivariant dg module.
\end{theorem}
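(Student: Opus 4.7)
The $\C^*$-action on $A$ corresponds to a one-parameter family of dg algebra automorphisms $\Bphi_t$, $t \in \C^*$; for each $t$ let $M_t$ denote $M$ regarded as a dg $A$-module via $\Bphi_t$. Producing a $\C^*$-equivariant dg module quasi-isomorphic to $M$ is equivalent to constructing an algebraic family of $A$-module quasi-isomorphisms $\psi_t: M_t \to M$ with $\psi_1 = \mathrm{id}$ and satisfying the cocycle identity $\psi_{st} = \psi_s \circ \Bphi_s^*(\psi_t)$; the associated coaction $M \to M \otimes \OO(\C^*)$ has fibre $\psi_t$ at $t$. I plan to assemble the $M_t$ into a single dg module $\mathcal{M}$ over $A \otimes \OO(\C^*)$, with underlying complex $M \otimes_\C \OO(\C^*)$, so that the existence of the family $\psi_t$ (forgetting multiplicativity) becomes the existence of an $A \otimes \OO(\C^*)$-linear quasi-isomorphism $\mathcal{M} \to M \otimes \OO(\C^*)$ restricting to the identity at $t = 1$.

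\textbf{Construction.} Working inductively along the $(t-1)$-adic filtration, the obstruction to extending a lift from the $k$-th to the $(k+1)$-st infinitesimal neighbourhood of $t=1$ is a class in $H^1(\mathrm{End}\,M) \otimes (t-1)^{k}/(t-1)^{k+1}$, which vanishes by rigidity; this yields a formal trivialisation at $t=1$. Because the $\C^*$-action on $A$ transports the argument to any other point of $\C^*$, one has in particular $M_{t_0} \simeq M$ for every $t_0$. Next, properness of $A$ and perfectness of $M$ imply that $N := H^0\bigl(\mathrm{Hom}_{A \otimes \OO(\C^*)}(\mathcal{M}, M \otimes \OO(\C^*))\bigr)$ is a finitely generated $\OO(\C^*)$-module whose base change to any $t_0 \in \C^*$ computes $H^0(\mathrm{Hom}_A(M_{t_0},M)) = \C$ (using simplicity). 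Hence $N$ has constant fibre dimension $1$, so is a line bundle on $\C^*$; since $\mathrm{Pic}(\C^*) = 0$, it is trivial. The section corresponding to the formal trivialisation chosen above is a formal unit at every point, hence a global unit in $\OO(\C^*)$, i.e.\ a nowhere-vanishing Laurent polynomial; this section is the desired algebraic family $\psi_t$.

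\textbf{Automatic cocycle.} The defect of multiplicativity of $\psi_t$ is an algebraic map $\lambda: \C^* \times \C^* \to \C^*$ satisfying the 2-cocycle relation; since every unit in $\C[s^{\pm 1}, t^{\pm 1}]$ is a monomial $d\, s^a t^b$, a direct calculation in the relation $\lambda(s,t)\lambda(st,r) = \lambda(s,tr)\lambda(t,r)$ forces $a = b = 0$ and $d = 1$. Thus $\psi_t$ automatically satisfies the cocycle identity, and the equivariant refinement of $M$ follows.

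\textbf{Main obstacle.} The step I expect to be most delicate is the base-change statement in the middle paragraph: obtaining $N$ as a finitely generated $\OO(\C^*)$-module with controllable fibres in the dg setting, without invoking smoothness of $A$ (as would be needed for the moduli-stack arguments underlying Theorems \ref{th:polishchuk} and \ref{th:toen}). This will require a careful chain-level treatment using a perfect resolution of $M$, exploiting properness of $A$ to ensure finite-dimensionality of the fibrewise Hom spaces and the compatibility of cohomology with base change along $\OO(\C^*)$.
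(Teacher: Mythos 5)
Your first two paragraphs are essentially sound and run parallel to the paper's construction of what it calls a \emph{weak} $\C^*$-action: rigidity kills the infinitesimal obstruction (the paper's Killing class), so the family of pullbacks $M_t$ admits a connection-type structure; properness plus perfectness makes $N = H^0(\mathrm{Hom}_{A\otimes\OO(\C^*)}(\mathcal{M}, M\otimes\OO(\C^*)))$ a finitely generated $\OO(\C^*)$-module with one-dimensional fibres by simplicity, hence a trivial line bundle, and a suitable global section gives an algebraic family of quasi-isomorphisms $\psi_t$ with $\psi_1 = \mathrm{id}$. (Even here there is a loose end: a formal unit at $t=1$ does not by itself produce a nowhere-vanishing global section, and "transporting the argument" only trivializes formal neighbourhoods; the paper resolves this by normalizing the monodromy of the induced connection, using the freedom of adding a multiple of the identity to the bounding cochain and the surjectivity of $\exp$.)

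The genuine gap is the step you call "Automatic cocycle". The defect of multiplicativity $\psi_s \circ \Bphi_s^*(\psi_t) \circ \psi_{st}^{-1}$ is \emph{not} a scalar-valued function on $\C^*\times\C^*$: simplicity only says $H^0(\mathit{End}(M)) \iso \C$, so this defect is a scalar only up to chain homotopy, and your monomial computation in the units of $\C[s^{\pm 1},t^{\pm 1}]$ at best normalizes it to $1$ \emph{on cohomology}. What you obtain is therefore exactly a weak action in the paper's sense (cocycle identity in the homotopy category of modules), not the strict chain-level cocycle family your reformulation requires, and a weak action does not directly yield an equivariant dg module on anything quasi-isomorphic to $M$. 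Closing this gap is the actual substance of the paper's proof: one must choose coherent higher homotopies (a "homotopy action"), where the successive obstructions are group cocycles with values in $H^{2-s}(\mathit{End}(M))$ and vanish not by rigidity or simplicity but by the acyclicity of rational group cohomology of the reductive group $\C^*$ (the paper's Lemma on $C^*(G,V)$); and then one must glue, via an explicit bar-type module $M^{\mathit{naive}}$, to produce a quasi-isomorphic module carrying a genuine rational action, which in turn requires first replacing $A$ and $M$ by minimal models so that $M$ is bounded below and the resulting representation is rational. Without these steps (or some substitute for them), the proposal does not prove the theorem.
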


The proof follows the same overall strategy as in \cite{polishchuk08} while remaining elementary throughout. We first show that $M$ can carry a ``weak $\C^*$-action''. This is done by considering the pullbacks of $M$ as a family of modules over $\C^*$. Next, there is an obstruction theory which equips $M$ with a series of higher homotopies added to the weak action, corresponding to the use of simplicial methods in \cite{polishchuk08}. Finally, there is an explicit ``gluing'' step which constructs the desired quasi-isomorphic module. To\"en informs me that his methods can also be adapted to give a proof of Theorem \ref{th:main4}.

Finally, to return to symplectic topology, note that our previous results about $(A_m)$ Milnor fibres represent just one application of Theorem \ref{th:main4}. There are more situations in which one expects Fukaya categories of noncompact symplectic manifolds to have $\C^*$-actions, such as the double suspensions considered in \cite{seidel09}, and where the same algebraic ideas may be useful (with this in mind, Section \ref{sec:alternative} takes a look at the results one can get for the $(A_m)$ Milnor fibres {\em without} appealing to \cite{ishii-uehara05,ishii-ueda-uehara06}).

{\em Acknowledgments.} This work answers a question posed to me by Ivan Smith, who also provided the basic idea of using group actions to attack it (I've probably interpreted that suggestion in a more algebraic way than he would have liked). A conversation with Bertrand To\"en helped me greatly to understand the obstruction theory involved. During the preparation of the paper, I was partially supported by NSF grant DMS-1005288.

\section{Algebraic background}

\subsection{$A_\infty$-algebras and modules\label{subsec:ainfty}}
We recall some elements of the theory of $A_\infty$-structures (not new of course, see for instance Keller's papers \cite{keller99,keller00,keller06}). We consider $A_\infty$-algebras $A$ over $\C$, assuming for now that they are strictly unital. Our sign conventions follow \cite{seidel04}. In particular, the correspondence between dg algebras and $A_\infty$-algebras with vanishing compositions of order $>2$ is given by setting
\begin{equation} \label{eq:cohomology}
\begin{aligned}
& \mu^1_A(a) = (-1)^{|a|} \partial_A a, \\
& \mu^2_A(a_2,a_1) = (-1)^{|a_1|} a_2a_1.
\end{aligned}
\end{equation}
We will allow a certain sloppiness in terminology, saying ``$A$ is a dg algebra'' instead of ``$A$ is an $A_\infty$-algebra with vanishing compositions of order $>2$, and therefore corresponds to a dg algebra''. Note that conversely, if $A$ is a general $A_\infty$-algebra, the cohomology $H(A)$ becomes a graded associative algebra by applying the same sign change as in \eqref{eq:cohomology} (similar conventions apply to dg categories and $A_\infty$-categories). An $A_\infty$-algebra is called {\em proper} if $H(A)$ is of total finite dimension, and {\em weakly proper} if $H(A)$ is finite-dimensional in each degree. 

Fix an $A_\infty$-algebra $A$. A right $A_\infty$-module $M$ over $A$ is a graded vector space equipped with operations
\begin{equation} \label{eq:ainfty-module}
\mu_M^{d+1}: M \otimes A^{\otimes d} \longrightarrow M[1-d], \quad d \geq 0,
\end{equation}
satisfying
\begin{equation} \label{eq:ainfty-module-eq}
\begin{aligned}
& \sum_{i,j} (-1)^{|a_1|+\cdots+|a_i|-i} \mu_M^{d+2-j}(m,a_d,\dots,\mu_A^j(a_{i+j},\dots,a_{i+1}),a_i,\dots,a_1) \\ + &
\sum_i (-1)^{|a_1+\cdots+|a_i|-i} \mu_M^{i+1}(\mu_M^{d+1-i}(m,a_d,\dots,a_{i+1}),a_i,\dots,a_1) = 0.
\end{aligned}
\end{equation}
We will again impose a strict unitality condition. As before, $H(M)$ inherits the structure of a graded module over $H(A)$. Right $A$-modules form a dg category $A^{\mathit{mod}}$. Also relevant for us is the full dg subcategory $A^{\mathit{perf}} \subset A^{\mathit{mod}}$ of {\em perfect modules}, defined as follows. Start with the free module $A$ and consider all modules constructed from that by a finite sequence of shifts and mapping cones. $M$ is perfect if, in $H^0(A^{\mathit{mod}})$, it is isomorphic to a direct summand of one of the previously constructed objects. If $M_0$ is perfect and $(M_{1,i})_{i \in I}$ is an arbitrary collection of modules, the natural map
\begin{equation} \label{eq:compact}
\bigoplus_{i \in I} \mathit{hom}_{A^{\mathit{mod}}}(M_0, M_{1,i}) \longrightarrow \mathit{hom}_{A^{\mathit{mod}}}(M_0, \bigoplus_{i \in I} M_{1,i})
\end{equation}
is a quasi-isomorphism. This is the {\em compactness} property of perfect $A_\infty$-modules (a more general version of which actually characterizes them among all modules; see the parallel discussion for dg modules in \cite{keller06}). 

One can generalize both $A^{\mathit{mod}}$ and $A^{\mathit{perf}}$ to the case where $A$ is an $A_\infty$-category. The former can be defined as the category of contravariant $A_\infty$-functors from $A$ to chain complexes. The latter is the full subcategory of objects built from those in the image of the Yoneda embedding $A \rightarrow A^{\mathit{mod}}$ in the same way as before (by shifts, mapping cones, and taking direct summands). Alternatively, one can first introduce the $A_\infty$-category $A^{\mathit{tw}}$ of twisted complexes, which is a natural enlargement of $A$ itself closed under shifts and mapping cones. There is a canonical $A_\infty$-functor $A^{\mathit{tw}} \rightarrow A^{\mathit{mod}}$ which extends the Yoneda embedding, and using that one shows that $A^{\mathit{perf}}$ is quasi-equivalent to the split-closure (Karoubi completion on the $A_\infty$-level) of $A^{\mathit{tw}}$.

\subsection{Relation to classical derived categories}
Let's temporarily restrict to the case when $A$ is a dg algebra. In that case, an $A_\infty$-module with $\mu_M^{d+1} = 0$ for all $d>1$ is the same as a dg module; or more precisely, the two structures are related by a sign change as in \eqref{eq:cohomology}. Let $K(A)$ be the dg category of dg modules over $A$ (morphisms are maps compatible with multiplication with elements of $A$), and $D(A)$ the dg derived category, formed by quotienting out $K(A)$ by acyclic complexes as in \cite{keller94, drinfeld02}. The obvious functor $K(A) \rightarrow A^{\mathit{mod}}$ induces a functor
\begin{equation} \label{eq:dg-to-ainfty}
D(A) \longrightarrow A^{\mathit{mod}}.
\end{equation}
It is a well-known fact that this is a quasi-equivalence. The main ingredient in the proof is the following quite general observation. Given any $A_\infty$-algebra $A$ and $A_\infty$-module $M$, one can construct another module
\begin{equation} \label{eq:tensor-product}
\begin{aligned}
& M \otimes_A A \stackrel{\mathrm{def}}{=} \bigoplus_{l \geq 0} M \otimes A[1]^{\otimes l} \otimes A, \\
& \mu^1_{M \otimes_A A}(m \otimes \bar{a}_l \otimes \cdots \otimes \bar{a}_1 \otimes a) = \\ & \qquad =
\sum_i (-1)^{|a|+|\bar{a}_1|+\cdots +|\bar{a}_i| - i} \mu_M^{l-i+1}(m,\bar{a}_l,\dots,\bar{a}_{i+1}) \otimes \bar{a}_i \otimes \cdots \otimes a \\
& \qquad
+ \sum_{i,j} (-1)^{|a|+|\bar{a}_1|+ \cdots + |\bar{a}_i| - i}
m \otimes \bar{a}_l \otimes \cdots \otimes \bar{a}_{i+j+1} \otimes \mu^j_A(\bar{a}_{i+j},\dots,\bar{a}_{i+1}) \otimes \bar{a}_i \otimes \cdots \otimes a \\
& \qquad
+ \sum_i m \otimes \bar{a}_l \otimes \cdots \otimes \bar{a}_{i+1} \otimes \mu^{i+1}_A(\bar{a}_i, \dots, a), \\
& \mu^{d+1}_{M \otimes_A A}(m \otimes \bar{a}_l \otimes \cdots \otimes \bar{a}_1 \otimes a, a_d,\dots,a_1) = \\
& \qquad = \sum_i m \otimes \bar{a}_l \otimes \cdots \otimes \bar{a}_{i+1} \otimes \mu_A^{i+1+d}(\bar{a}_i,\dots, \bar{a}_1,a,a_d,\dots,a_1) \quad \text{for $d>0$.}
\end{aligned}
\end{equation}
This comes with a canonical quasi-isomorphism $M \otimes_A A \rightarrow M$ \cite[Equation (2.21)]{seidel08}. In the case when $A$ is a dg algebra, $M \otimes_A A$ is always a dg module. Moreover, if the original $M$ was a dg module, the quasi-isomorphism $M \otimes_A A \rightarrow M$ is itself a dg module map. This provides an inverse (up to quasi-isomorphism of dg functors) to the more obvious functor \eqref{eq:dg-to-ainfty}.

Let's specialize even further, to the case where our $A_\infty$-algebra is a graded algebra, meaning that $\mu^2_A$ is the only nonzero structure map. One can then introduce another dg category $A^{\Bmod}$ (this is the start of a general notational convention, where {\bf bold} stands roughly for bigraded structures). To do that, think of $A$ itself as being bigraded, with bidegrees of the form $(0,s)$ where $s$ is the original grading. Objects of $A^{\Bmod}$ are bigraded vector spaces $\BM$ together with maps $\mu_{\BM}^{d+1}$ as in \eqref{eq:ainfty-module} which have bidegree $(1-d,0)$. These should satisfy equations as in \eqref{eq:ainfty-module-eq}, where the sum of the two degrees is used in determining all the relevant signs. The bigraded space $H^{r,s}(\BM)$ has the property that each piece $H^{r,*}(\BM)$ is a graded $A$-module in the classical sense. As a special case, objects which have vanishing structure maps $\mu_{\BM}^{d+1}$ for $d>1$ correspond bijectively to chain complexes of graded $A$-modules. An element $\Bphi \in \mathit{hom}^k_{A^\Bmod}(\BM_0,\BM_1)$ is a collection of maps
\begin{equation}
\Bphi^{d+1}: \BM_0 \otimes A^{\otimes d} \longrightarrow \BM_1
\end{equation}
of bidegrees $(k-d,0)$. The dg category structure of $A^\Bmod$ is given by the same formulae as for ordinary $A_\infty$-modules, again taking the sum of the two gradings into account when determining signs. Given any object $\BM$, one can form two kinds of shifts $\BM[1]$ and $\BM\{1\}$. The first one shifts the first grading downwards, and has the usual property that $\mathit{hom}_{A^{\Bmod}}(-,\BM[1]) = \mathit{hom}_{A^{\Bmod}}(-,\BM)[1]$. The second one shifts the second grading upwards (by convention). For instance, for the free module $A$ there is a natural isomorphism
% (this is an object of $A^{\Bmod}$ only thanks to the assumption that all $\mu_A^d$, $d \neq 2$, vanish). 
%Then, there is a natural isomorphism
\begin{equation}
H^r(\mathit{hom}_{A^{\Bmod}}(A\{s\},\BM)) \iso H^r(\mathit{hom}_{A^{\Bmod}}(A,\BM\{-s\})) \iso H^{r,s}(\BM).
\end{equation}
One also has a full subcategory $A^{\Bperf}$, which consists of objects that can be constructed starting from the free module by shifts (both kinds), mapping cones, and taking direct summands. As before, the entire construction turns out to be equivalent to one from classical homological algebra. Let $\BK(A)$ be the dg category of (unbounded) chain complexes of graded $A$-modules, and $\BD(A)$ the associated (dg) derived category. In parallel with \eqref{eq:dg-to-ainfty}, the obvious functor $\BK(A) \rightarrow A^\Bmod$ induces a quasi-equivalence
\begin{equation} \label{eq:dg-to-ainfty-2}
\BD(A) \longrightarrow A^{\Bmod}.
\end{equation}
This is well-known in the case where $A$ has trivial grading (stated in \cite{keller99, keller06b} without proof, but see \cite[Proposition 7.4]{keller00} for a full proof of the analogous statement for bimodules). Again, the key fact is that for any object $\BM$ one can construct another one $\BM \otimes_A A$ which lies in the image of \eqref{eq:dg-to-ainfty-2}. Moreover, the image of the full subcategory of bounded complexes of finitely generated projective modules, which we denote by $\BPerf(A)$, is quasi-equivalent to $A^{\Bperf}$. This follows from the characterization as subcategories of compact objects (again, this may be most familiar in the case of algebras with trivial grading).

Any graded algebra can be considered as a dg algebra with vanishing differential, and in that context we can compare the two previously discussed constructions. There are natural functors which collapse the bigradings, and these fit into a commutative diagram
\begin{equation} \label{eq:collapse}
\xymatrix{
\ar[d]^-{\iso} \BD(A) \ar[rr] && D(A) \ar[d]^-{\iso} \\
A^{\Bmod} \ar[rr] && A^{\mathit{mod}}.
}
\end{equation}
For simplicity, let's consider the top line of this diagram. The collapsing process takes a complex $\{\BM^r, \; \partial_\BM^r: \BM^r \rightarrow \BM^{r+1}\}$ of graded $A$-modules, and associates to it the total graded vector $M^t = \bigoplus_{r+s=t} \BM^{r,s}$ made into a dg module in the obvious way. If $M_0$ and $M_1$ are obtained in this way, we have a quasi-isomorphism
\begin{equation}
\mathit{hom}_{D(A)}(M_0,M_1) \iso \mathit{hom}_{\BD(A)}(\BM_0,\bigoplus_j \BM_1\{j\}[j]).
\end{equation}
By projecting to each summand $\BM_j\{j\}[j]$, one sees that there is an injective map
\begin{equation} \label{eq:collapse-2}
\bigoplus_{i+j=k} H^i(\mathit{hom}_{\BD(A)}(\BM_0\{j\},\BM_1)) \longrightarrow H^k(\mathit{hom}_{D(A)}(M_0,M_1)).
\end{equation}
If $\BM_0$ is an object of $A^\Bperf$, this map is an isomorphism by compactness. One slight cautionary remark concerning the multiplicative structures is appropriate: the diagram
\begin{equation}
\xymatrix{
\parbox{14em}{
$H^{i_2}(\mathit{hom}_{\BD(A)}(\BM_1\{j_2\},\BM_2))$ \newline $\otimes \, H^{i_1}(\mathit{hom}_{\BD(A)}(\BM_0\{j_1\},\BM_1))$
}
\ar[r] \ar[d] &
H^{i_1+i_2}(\mathit{hom}_{\BD(A)}(\BM_0\{j_1+j_2\},\BM_2)) \ar[d] \\
\parbox{13em}{
$H^{i_2+j_2}(\mathit{hom}_{D(A)}(M_1,M_2))$ \newline $\otimes \, H^{i_1+j_1}(\mathit{hom}_{D(A)}(M_0,M_1))$
}
\ar[r] &
H^{i_1+i_2+j_1+j_2}(\mathit{hom}_{D(A)}(M_0,M_2)) 
}
\end{equation}
commutes only up to a sign $(-1)^{i_1j_2}$. These signs come from the chain level realization of \eqref{eq:collapse-2}. Namely, take a class $[\Bphi]$ of bidegree $(i,j)$ on the left hand side of the map, and suppose for simplicity that this is represented by an actual map of chain complexes $\Bphi$. The associated dg module homomorphism $\phi$ is given by $\phi(m_0) = (-1)^{j|m_0|_1} \Bphi(m_0)$, where $|m_0|_1$ is the first grading.

\begin{remark} \label{th:scaling}
Given any graded algebra $A$, one can define a whole family $A^n$ of graded algebras simply by multiplying the degrees by a nonzero integer $n$. Any graded module over $A$ can be made into a graded module over $A^n$ in the same way. Conversely, any graded module over $A^n$ can be written as a finite direct sum of objects which come from $A$, with some shifts. On the level of the abelian categories $\mathit{Mod}(\cdot)$ of graded modules, the resulting relationship is simply that
\begin{equation} \label{eq:scale-grading}
\mathit{Mod}(A^n) \iso \bigoplus_{i=0}^{n-1} \mathit{Mod}(A).
\end{equation}
The associated derived categories then inherit a corresponding relationship. In particular, any indecomposable object of $\BD(A^n) \htp A^{n,\Bmod}$ comes from $\BD(A) \htp A^{\Bmod}$. Similarly, any indecomposable object of $A^{n,\Bperf}$ comes from $A^{\Bperf}$. Note that in contrast, there is no direct relationship between the categories of dg modules over $A$ and $A^n$.
%
%suppose that we have an indecomposable object $\BM$ of $\BD(A^n)$. Then the groups $H^*(\mathit{hom}_{\BD(A^n)}(\BM,\BM\{s\}))$ are nonzero %only for $s$ which are multiples of $n$. Moreover, for any other $\bar{n}$, we can find a corresponding indecomposable object $\bar{\BM} \in %\BD(A^{\bar{n}})$, unique up to shifts $\{\cdot\}$, such that
%\begin{equation}
%H^*(\mathit{hom}_{\BD(A^{\bar{n}})}(\bar{\BM},\bar{\BM}\{\bar{n}k\})) \iso H^*(\mathit{hom}_{\BD(A^{n})}(\BM,\BM\{nk\})).
%\end{equation}
%A corresponding statement holds for morphisms between pairs of objects, in particular we get an equality of total dimensions:
%\begin{equation}
%\mathrm{dim}_\C \bigoplus_{\bar s} H^*(\mathit{hom}_{\BD(A^{\bar{n}})}(\bar{\BM}_0,\bar{\BM}_1\{\bar s\})) =
%\mathrm{dim}_\C \bigoplus_s H^*(\mathit{hom}_{\BD(A^n)}(\BM_0,\BM_1\{s\}))
%\end{equation}
%Note that in contrast, there is no direct relationship between the categories of dg modules $D(A)$ and $D(A^n)$. Finally, the whole discussion carries %through without any changes if one considers perfect subcategories instead of the full derived categories.
\end{remark}

\begin{remark}
One important class of examples, Fukaya categories, do not (at least when constructed in the most obvious way) satisfy strict unitality, and instead are only unital on the cohomology level. In that case, one should instead consider cohomologically unital modules. Note that every cohomologically unital $A_\infty$-algebra is quasi-isomorphic to a strictly unital one. Moreover, if an $A_\infty$-algebra is strictly unital, it does not make a difference whether one considers strictly unital or cohomologically unital modules (the corresponding categories are quasi-equivalent). We refer to \cite{lefevre} for an extensive discussion.
\end{remark}

\section{$(A_m)$-algebras and hypersurfaces}

\subsection{Algebraic definition} 
Fix $m \geq 3$, $n \geq 1$. Define a finite-dimensional graded algebra $A = A_m^n$ over $\C$ as the quotient of the path algebra of the following graded quiver:
\begin{equation} \label{eq:am-quiver}
\xymatrix{
\stackrel{1}{\bullet} \ar@/_.25pc/[r]_-{0} & \stackrel{2}{\bullet} \ar@/_.25pc/[l]_-{n}
\ar@/_.25pc/[r]_-{0} & \cdots \ar@/_.25pc/[l]_-{n} \ar@/_.25pc/[r] &
\stackrel{m-1}{\bullet} \ar@/_.25pc/[l] \ar@/_.25pc/[r] &
\stackrel{m}{\bullet} \ar@/_.25pc/[l]
}
\end{equation}
Our convention is that we write paths from right to left, so $(3|2|1)$ is the length two path going from the first to the third vertex. With this mind, the grading is such that $|(k+1|k)| = 0$, $|(k|k+1)| = n$. We define $A$ by imposing the relations
\begin{equation} \label{eq:quadratic-relations}
(k|k+1|k+2) = 0, \;\; (k+2|k+1|k) = 0, \;\; (k|k+1|k) = (k|k-1|k).
\end{equation}
This can be extended to lower values of $m$ as follows. For $m = 2$, take the same quiver \eqref{eq:am-quiver} but with relations $(1|2|1|2) = 0$, $(2|1|2|1) = 0$. Finally, for $m = 1$ one sets $A_1^n = \C[t]/t^2$ with $t$ a generator of degree $n$ (this may seem ad hoc, but the resulting algebras do fit in naturally with the general case $m \geq 3$).

Consider the derived category $\BD(A)$ of right graded modules, which as usual for us is a dg category. As explained in \cite{khovanov-seidel98, rouquier-zimmermann03}, this category carries a weak action of the braid group $\mathit{Br}_{m+1}$. The generators $\sigma_k \in \mathit{Br}_{m+1}$ act by twist functors $T_{P_k}$ along the modules $P_k = (k)A$, which means that for any object $\BM$ one has an exact triangle
\begin{equation} \label{eq:twist-triangle}
\xymatrix{
\BM \ar[rr] && T_{P_k}(\BM) \ar[dl]^-{[1]} \\ &
\hspace{-3em}\bigoplus_{i,j} H^j(\mathit{hom}_{\BD(A)}(P_k\{i\},\BM)) \otimes P_k\{i\}[-j]\hspace{-3em} \ar[ul]
}
\end{equation}
More simply put, $H^j(\mathit{hom}_{\BD(A)}(P_k\{i\},\BM)) \iso H^{i,j}(\BM)(k)$ as vector space, and we take the associated direct sum of appropriately shifted copies of $P_k$ as the bottom term in \eqref{eq:twist-triangle}.

\begin{lemma} \label{th:central-shift}
Take the central element $\delta = (\sigma_1 \dots \sigma_m)^{m+1} \in \mathit{Br}_{m+1}$. Then, $\delta^2$ acts by a functor isomorphic to the shift $[4m]\{(2m+2)n\}$.
\end{lemma}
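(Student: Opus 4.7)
The plan is to verify the formula on the projective generators $P_1, \dots, P_m$. Both $\delta^2$ and the shift $[4m]\{(2m+2)n\}$ are exact autoequivalences of $\BD(A)$, and the $P_k$ generate the triangulated category, so an isomorphism on the $P_k$'s (compatible with the morphisms between them) extends to a natural isomorphism of the two autoequivalences. A helpful supplementary remark is that $\delta^2$ is central in $\mathit{Br}_{m+1}$, so it commutes up to natural isomorphism with every $T_{P_i}$; together with the indecomposability of each $P_k$ and the fact that $\mathit{End}^{*,*}(P_k) \iso \C \oplus \C$ is concentrated in bidegrees $(0,0)$ and $(0,n)$, this strongly constrains $\delta^2(P_k)$ to be a bigraded shift of $P_k$, with the shift independent of $k$.

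For the actual computation I plug the algebra structure into the twist triangle \eqref{eq:twist-triangle}. Reading $\mathit{hom}^{*,*}(P_k, P_j) = e_j A e_k$ off the quiver \eqref{eq:am-quiver} and the relations \eqref{eq:quadratic-relations}: it is zero for $|j-k| \geq 2$; one-dimensional in bidegree $(0,0)$ for $j = k+1$ (the arrow $(k+1|k)$) or $(0,n)$ for $j = k-1$ (the arrow $(k-1|k)$); and two-dimensional with basis $\{e_k, \text{loop}\}$ in bidegrees $(0,0)$ and $(0,n)$ for $j = k$. Plugging this into \eqref{eq:twist-triangle} gives $T_{P_k}(P_j) \iso P_j$ for $|j-k| \geq 2$, $T_{P_k}(P_k) \iso P_k\{n\}[1]$, and the two-term complexes $T_{P_k}(P_{k+1}) \iso \mathrm{cone}(P_k \to P_{k+1})$ and $T_{P_k}(P_{k-1}) \iso \mathrm{cone}(P_k\{n\} \to P_{k-1})$, with maps given by the corresponding paths. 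I then apply the Coxeter word $c = \sigma_1 \cdots \sigma_m$ to $P_1$ and iterate $m+1$ times; the successive twisted complexes close up and collapse back to a shift of $P_1$, and tracking the shifts accumulated at each twist yields $\delta(P_1) \iso P_1[2m]\{(m+1)n\}$, whose square is the claimed $P_1[4m]\{(2m+2)n\}$. The case $m = 1$ is a quick sanity check: $\delta = \sigma_1^2$ and two applications of $T_{P_1}(P_1) \iso P_1\{n\}[1]$ give $\delta(P_1) \iso P_1[2]\{2n\}$, squaring to $P_1[4]\{4n\}$, in agreement with the formula at $m=1$.

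The main obstacle will be the combinatorial bookkeeping of the bigraded shifts through the iterated twists, particularly at the step where the rotation wraps from $P_m$ back to a shifted copy of $P_1$ and the full twist pattern collapses the internal structure of the twisted complex. If this direct route looks too intricate, a cleaner alternative is to exploit the graded Frobenius structure on $A_m^n$: $\BPerf(A)$ then carries a Serre functor equal to a bigraded shift $[d]\{s\}$, and a Rouquier--Zimmermann-type identification of $\delta$ with (a power of) this Serre functor would reduce the lemma to a direct computation of $(d,s)$ from the Frobenius bidegree on $A_m^n$.
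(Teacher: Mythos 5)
Your starting point, computing $\delta(P_k) \iso P_k[2m]\{(m+1)n\}$ by feeding the quiver algebra into the twist triangle \eqref{eq:twist-triangle}, is also the paper's first step (equation \eqref{eq:central-shift}, obtained there by direct computation or via the geometric interpretation of the braid action). The genuine gap is everything after that: knowing that $\delta^2$ sends each $P_k$ to $P_k[4m]\{(2m+2)n\}$ as an \emph{object} does not identify $\delta^2$ with the shift as a \emph{functor}. You acknowledge this parenthetically (``compatible with the morphisms between them''), but the computation you outline only tracks the bigraded shifts accumulated by the objects, and only for $P_1$ at that; the action of $\delta^2$ on the morphism spaces $\mathit{hom}(P_k,P_{k\pm1})$ and on the degree-$n$ loops $(k|k\pm1|k)$ is never determined, and that is exactly where the content of the lemma lies. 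The paper handles this by composing $\delta$ with the inverse shift to get a functor $\phi$ preserving the free module $A = P_1 \oplus \cdots \oplus P_m$, so that $\phi$ is induced by an algebra automorphism fixing the vertices; after normalizing by inner automorphisms to fix the arrows $(k+1|k)$, the remaining freedom is a scalar on the loops, which an integrality argument (working over $\Z$) pins down to $\pm 1$, whence $\phi^2 \iso \mathrm{id}$. Note that if object-level bookkeeping sufficed, your argument would equally ``prove'' $\delta \iso [2m]\{(m+1)n\}$ as functors --- a statement the paper pointedly does not make: the lemma is about $\delta^2$ precisely because $\delta$ itself may differ from that shift by the automorphism acting by $-1$ on the loops, and only squaring removes the ambiguity. (Your supplementary centrality/indecomposability remark also does not by itself force $\delta^2(P_k)$ to be a shift of $P_k$ rather than of some other spherical object, but this is secondary to the main issue.)

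Your fallback route through the Serre functor does not repair this, and is in fact close to circular: $A_m^n$ is a graded symmetric Frobenius algebra, so its Serre functor on $\BPerf(A)$ is itself just a bigraded shift, and identifying $\delta^2$ with a specific power of it --- the Rouquier--Zimmermann-type statement you would want to quote --- is precisely the assertion of Lemma \ref{th:central-shift}, not an available input. To complete your argument along the paper's lines you would need to (i) establish $\delta(P_k) \iso P_k[2m]\{(m+1)n\}$ for \emph{all} $k$, compatibly with the decomposition of $A$, and (ii) control the induced automorphism of $A$ on arrows and loops, showing it squares to an inner automorphism.
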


\begin{proof}[Sketch of proof]
This falls in the ``well-known to specialists'' category, but since there seems to be no direct reference, we will sketch a proof. One can show, either by direct computation or by appealing to the geometric interpretation of the braid group action given in \cite{khovanov-seidel98}, that there are quasi-isomorphisms
\begin{equation} \label{eq:central-shift}
\delta(P_k) \iso P_k[2m]\{(m+1)n\} \text{ for all $k$.}
\end{equation}
Let $\phi: \BD(A) \rightarrow \BD(A)$ be the action of $\delta$ composed with the inverse shift $[-2m]\{-(m+1)n\}$. Then, \eqref{eq:central-shift} says that $\phi$  maps the free module $A$ to itself (and moreover that is compatible with its decomposition $A = P_1 \oplus \cdots \oplus P_m$). General homological algebra says that then, $\phi$ must be quasi-isomorphic to the functor induced by an automorphism of the algebra $A$ (and moreover, that automorphism must act trivially on the vertices of our quiver). By composing with inner automorphisms, which do not affect the isomorphism type of the associated functor, we can further restrict to automorphisms which act trivially on the paths $(k+1|k)$. Such automorphisms are then uniquely determined by their action on $\C (k|k \pm 1|k) = H^0(\mathit{hom}_{\BD(A)}(P_k\{n\},P_k))$. To see that $\phi^2$ acts trivially, one can either do an explicit computation similar to \eqref{eq:central-shift}, or else note that everything can be carried out with coefficients in $\Z$ rather than $\C$, which shows that the automorphism of $\C (k|k \pm 1|k)$ associated to $\phi$ is necessarily $\pm 1$.
\end{proof}

This leads to the following useful technical criterion:

\begin{lemma} \label{th:detect-perfect}
An object $\BM$ of $\BD(A)$ is perfect if and only if it has the following two properties:
\begin{align} \label{eq:finiteness}
& \mathrm{dim}_\C \, H^*(\BM) < \infty, \\
\label{eq:finiteness-2}
& \mathrm{dim}_\C \, \bigoplus_i H^*(\mathit{hom}_{\BD(A)}(\BM,\BM\{i\})) < \infty.
\end{align}
\end{lemma}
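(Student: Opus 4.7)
I would prove the stronger statement that for any two perfect modules $\BM_0, \BM_1 \in A^\Bperf$, the space $\bigoplus_i H^*(\mathit{hom}_{\BD(A)}(\BM_0, \BM_1\{i\}))$ is finite-dimensional, from which both \eqref{eq:finiteness} (case $\BM_0 = A$, $\BM_1 = \BM$, using the identification $H^r(\mathit{hom}(A, \BM\{-s\})) \iso H^{r,s}(\BM)$) and \eqref{eq:finiteness-2} (case $\BM_0 = \BM_1 = \BM$) follow by specialisation. The stronger statement is verified by induction on the construction of $\BM_0, \BM_1$: at the base $\BM_0 = \BM_1 = A$ both sides reduce to $A$ itself, and preservation under bigraded shifts (trivial), mapping cones (via the long exact sequence obtained by applying $\bigoplus_i \mathit{hom}(-, \BM_1\{i\})$ or $\bigoplus_i \mathit{hom}(\BM_0, -\{i\})$ to a defining triangle), and retracts is routine.

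\textbf{Backward direction.} Assume \eqref{eq:finiteness} and \eqref{eq:finiteness-2}. The plan is to exhibit $\BM$ as a perfect module by constructing a quasi-isomorphism from an iterated cone of projectives. Using \eqref{eq:finiteness} I choose a finite set of cohomology classes $v_\alpha$ spanning $H^{*,*}(\BM)$, each of some bidegree $(j_\alpha, i_\alpha)$ at some vertex $k_\alpha$; by $H^j(\mathit{hom}_{\BD(A)}(P_k\{i\}, \BM)) \iso H^{j,i}(\BM)(k)$, these lift to morphisms which assemble into a map $\BP_0 = \bigoplus_\alpha P_{k_\alpha}\{i_\alpha\}[-j_\alpha] \to \BM$ that is surjective on $H^{*,*}$. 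Taking the cone $C_0$, which again has finite-dimensional total cohomology, and iterating, one obtains a tower of perfect modules $\BP_0 \to \BP_1 \to \cdots$ whose homotopy colimit is quasi-isomorphic to $\BM$. To conclude I need this tower to terminate, so that $\BM$ itself is (a retract of) a finite iterated extension of bigraded shifts of projectives.

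\textbf{Main obstacle.} Termination is the technical heart of the lemma, and it is where \eqref{eq:finiteness-2} must enter in an essential way. The condition \eqref{eq:finiteness} alone cannot suffice: the simple modules $S_k$ satisfy it, but since $A_m^n$ is very far from having finite global dimension (for instance $A_m^1$ is Frobenius), their minimal projective resolutions are infinite and they are not perfect. What disqualifies them is precisely that $\bigoplus_i \mathrm{Ext}^*(S_k, S_k\{i\})$ is infinite-dimensional, violating \eqref{eq:finiteness-2}. I would therefore argue by contradiction/obstruction theory: a genuinely non-terminating tower would produce, through the successive connecting maps $C_n \to \BP_n[1] \to \cdots \to \BM[\mathrm{stuff}]$, infinitely many linearly independent classes in the total self-extension space $\bigoplus_i H^*(\mathit{hom}_{\BD(A)}(\BM, \BM\{i\}))$, contradicting \eqref{eq:finiteness-2}. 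Making this precise requires controlling which bidegrees new syzygies can occupy at each stage, and I expect the Calabi-Yau-type periodicity supplied by Lemma \ref{th:central-shift} (which says that $\delta^2$ acts as a pure shift $[4m]\{(2m+2)n\}$) to play a role here, by bounding the range in which such non-trivial classes can appear and thereby forcing the tower to stabilise.
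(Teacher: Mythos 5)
Your forward direction is fine (the paper dismisses it as obvious, and your induction over shifts, cones and retracts is a correct way to spell it out). The backward direction, however, has a genuine gap exactly where you say the "technical heart" lies: the termination of your tower is never actually proved, and the heuristic you offer for it does not work as stated. If you run the resolution process, the connecting maps give you classes in spaces of the form $H^*(\mathit{hom}_{\BD(A)}(\BM,\Omega_t[t]\{\ast\}))$, where $\Omega_t$ is the $t$-th syzygy-type object, \emph{not} classes in the self-extension space $\bigoplus_i H^*(\mathit{hom}_{\BD(A)}(\BM,\BM\{i\}))$; except in special cases (e.g.\ the simple module over $A_1^n$, where the syzygies are shifted copies of the module itself) there is no mechanism turning non-termination into infinitely many independent self-extensions of $\BM$, so \eqref{eq:finiteness-2} cannot be brought to bear this way without substantial extra work. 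Your guess that Lemma \ref{th:central-shift} should "bound the bidegrees of syzygies" is also not how it is used.

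The paper's argument is different and avoids resolutions altogether: it is a split-generation argument using the braid group action. Apply the twist functors making up the central element $\delta^{2t}$ to $\BM$. Each twist triangle \eqref{eq:twist-triangle} has bottom term a direct sum of shifted copies of $P_k$ indexed by $H^{*,*}(\BM)$, which is a \emph{finite} sum thanks to \eqref{eq:finiteness}; composing these triangles yields an exact triangle whose first map is $\BM \rightarrow \delta^{2t}(\BM) \iso \BM[4mt]\{(2m+2)nt\}$ (this is where Lemma \ref{th:central-shift} enters) and whose third term is built from the $P_k$ by finitely many shifts and cones. The map $\BM \rightarrow \BM[4mt]\{(2m+2)nt\}$ is a class in a fixed graded piece of $\bigoplus_i H^*(\mathit{hom}_{\BD(A)}(\BM,\BM\{i\}))$, which by \eqref{eq:finiteness-2} vanishes once $t$ is large; hence the triangle splits and $\BM$ is a direct summand of an object finitely built from the $P_k$, i.e.\ perfect. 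So while your overall framing (finite generation plus an argument that \eqref{eq:finiteness-2} forces a splitting/termination) points in the right direction, the actual mechanism --- the periodicity of $\delta^2$ producing a single obstruction class that must vanish in high degree --- is missing from your proposal, and without it the proof is incomplete.
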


\begin{proof}
It is obvious that perfect objects have those two properties, so only the converse is of interest. Suppose that $\BM$ satisfies \eqref{eq:finiteness}, \eqref{eq:finiteness-2}. By combining exact triangles \eqref{eq:twist-triangle}, we arrive at a triangle of the form
\begin{equation} \label{eq:twist-triangle-2}
\xymatrix{
\BM \ar[rr] && \delta^{2t}(\BM) \iso \BM[4mt]\{(2m+2)nt\} \ar[dl]^-{[1]} \\ &
\hspace{-3em} \left( \parbox{12em}{an object built by mapping cones and shifts from the $P_k$} \right) \hspace{-3em} \ar[ul]
}
\end{equation}
Here, we have used \eqref{eq:finiteness} to ensure that the bottom term in each triangle \eqref{eq:twist-triangle} is a {\em finite} direct sum of shifted copies of $P_k$. Now, \eqref{eq:finiteness-2} implies that if we make $t$ large, the horizontal arrow in \eqref{eq:twist-triangle-2} vanishes. In that case $\BM$ is a direct summand of the bottom object, hence perfect (this is a slight variation of a familiar split-generation argument \cite[Corollary 5.8]{seidel04}).
\end{proof}

The algebras $A = A_m^n$ occur in a variety of contexts. To understand that, it is maybe useful to recall some material from \cite{seidel-thomas99}. Let $D$ be a cohomologically unital $A_\infty$-category over $\C$. An object $Z$ of $D$ is called {\em spherical} of some dimension $n>0$ \cite[Definition 2.9]{seidel-thomas99} if it  satisfies the following conditions: $H^*(\mathit{hom}_D(Y,Z))$ and $H^*(\mathit{hom}_D(Z,Y))$ are of finite total dimension for any $Y$; $H^*(\mathit{hom}_D(Z,Z)) \iso H^*(S^n;\C)$ has one generator in degrees $0$ and $n$, respectively; and finally, the composition
\begin{equation}
H^{n-*}(\mathit{hom}_D(Y,Z)) \otimes H^*(\mathit{hom}_D(Z,Y)) \longrightarrow H^n(\mathit{hom}_D(Z,Z)) \iso \C
\end{equation}
is a nondegenerate pairing for any $Y$. An $(A_m)$ chain of spherical objects \cite[Equation (2.8)]{seidel-thomas99} is an ordered collection $(Z_1,\dots,Z_m)$ of spherical objects of the same dimension $n$, such that
\begin{equation}
H^*(\mathit{hom}_D(Z_i,Z_j)) = \begin{cases} \text{one-dimensional, concentrated in degree $0$} & j=i+1, \\
\text{one-dimensional, concentrated in degree $n$} & j = i-1, \\
0 & |i-j| \geq 2.
\end{cases}
\end{equation}
We then have the following formality result \cite[Lemma 4.21]{seidel-thomas99}:

\begin{lemma} \label{th:formality}
Let $(Z_1,\dots,Z_m)$ be an $(A_m)$-chain, with $n \geq 2$. Then the endomorphism algebra of $Z = \bigoplus_k Z_k$ is quasi-isomorphic to $A$. In particular, we get a cohomologically full and faithful embedding $A^{\mathit{perf}} \rightarrow D^{\mathit{perf}}$, which sends $P_k$ to $Z_k$. \qed
\end{lemma}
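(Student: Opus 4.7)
The plan is to prove the lemma in three steps: identify the cohomology-level endomorphism algebra with $A$, promote this identification to a quasi-isomorphism of $A_\infty$-algebras (the formality step), and then extend to the Yoneda embedding on perfect modules.

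\emph{Step 1 (cohomology algebra).} Write $Z = \bigoplus_k Z_k$. From the $(A_m)$-chain axioms, $H^*(\mathit{hom}_D(Z,Z))$ decomposes into blocks $H^*(\mathit{hom}_D(Z_i,Z_j))$, and each block matches the corresponding piece of $A$: the diagonal blocks are $H^*(S^n;\C)$ by the spherical object condition, giving units $e_k \in H^0$ and top classes $c_k \in H^n$; the super-diagonal blocks contribute generators $x_k\in H^0(\mathit{hom}_D(Z_k,Z_{k+1}))$ corresponding to the arrows $(k{+}1|k)$; the sub-diagonal blocks contribute $y_k \in H^n(\mathit{hom}_D(Z_{k+1},Z_k))$ corresponding to $(k|k{+}1)$; and all other blocks vanish. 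The vanishing relations $(k|k{+}1|k{+}2)=0$ and $(k{+}2|k{+}1|k)=0$ hold trivially because $\mathit{hom}_D(Z_k,Z_{k+2})$ is acyclic. The crucial loop relation $(k|k{+}1|k) = (k|k{-}1|k)$ uses the nondegenerate pairing in the definition of spherical object: both $y_k\circ x_k$ and $x_{k-1}\circ y_{k-1}$ must be nonzero multiples of $c_k$, and after rescaling the generators $x_k, y_k$ one arranges the two products to coincide. This produces an isomorphism of graded algebras $A \iso H^*(\mathit{hom}_D(Z,Z))$.

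\emph{Step 2 (formality).} We must upgrade the isomorphism $A \iso H^*(\mathit{hom}_D(Z,Z))$ to a quasi-isomorphism of $A_\infty$-algebras, i.e.\ show that the minimal model of $\mathit{hom}_D(Z,Z)$ has $\mu^k=0$ for $k\geq 3$. The standard obstruction theory places the obstruction to killing $\mu^k$ in the Hochschild cohomology group $HH^k(A,A)$ in internal degree $2-k$. To control these groups, exploit that $A$ is concentrated in degrees $0$ and $n$ and carries the additional \emph{length} grading coming from its presentation as a path algebra modulo homogeneous relations (this is precisely the second grading discussed in Section 2 of the paper, where $(k{+}1|k)$ has length $1$ and $(k|k{+}1)$ has length $1$, while the identities have length $0$). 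Any $\mu^k$ of total cohomological degree $2-k$ must have composite input/output degrees summing appropriately, and the assumption $n\geq 2$ forces these obstruction classes into bidegrees in which, by a direct Hochschild cochain calculation on the quiver with relations, the cohomology groups vanish. Thus the minimal model of $\mathit{hom}_D(Z,Z)$ is formal, giving a quasi-isomorphism of $A_\infty$-algebras $A \simeq \mathit{hom}_D(Z,Z)$.

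\emph{Step 3 (embedding).} A quasi-isomorphism $A \simeq \mathit{hom}_D(Z,Z)$ is the same data as a cohomologically full and faithful $A_\infty$-functor $F: A \to D$ sending the unique object of $A$ (viewed as a one-object $A_\infty$-category with idempotents $e_k$ splitting off $P_k$) to $Z$, and hence sending $P_k$ to $Z_k$. Extending by Yoneda and passing to modules gives a functor $F^{\mathit{mod}}: A^{\mathit{mod}} \to D^{\mathit{mod}}$; since $F$ is cohomologically fully faithful on the generators $P_k$, this extension is cohomologically fully faithful on $A^{\mathit{perf}}$, as perfect modules are built from the $P_k$ by shifts, cones, and summands (the operations under which cohomological full faithfulness is preserved). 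Restricting the target appropriately yields the desired embedding $A^{\mathit{perf}} \to D^{\mathit{perf}}$.

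The main obstacle is Step 2: verifying the required Hochschild cohomology vanishing (or equivalently the vanishing of all Massey products involving the generators $x_k$, $y_k$, $c_k$). Steps 1 and 3 are essentially bookkeeping once the $A_\infty$ quasi-isomorphism of Step 2 is in hand.
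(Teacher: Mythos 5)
The paper itself does not prove this lemma: it is quoted from \cite[Lemma 4.21]{seidel-thomas99}, and your proposal follows the same route as that reference --- identify the graded algebra $H^*(\mathit{hom}_D(Z,Z))$ with $A$, then invoke intrinsic formality of $A$ for $n \geq 2$ via the vanishing of the Hochschild groups $HH^s(A,A)$ in internal degree $2-s$ for $s>2$, and finally extend along Yoneda. Your Steps 1 and 3 are correct and essentially bookkeeping, as you say: in Step 1 the required rescalings can be chosen recursively in $k$ (and the potentially troublesome products such as $c_k \cdot x_{k-1}$ land in groups that vanish because each $\mathit{hom}_D(Z_i,Z_{i\pm1})$ is concentrated in a single degree), and Step 3 is the standard argument that cohomological full faithfulness survives shifts, cones and summands.

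The gap is Step 2, which is the entire mathematical content of the lemma and which you assert rather than prove. The vanishing of $HH^s(A,A)$ in internal degree $2-s$ for $s \geq 3$ does not follow from bidegree bookkeeping: since $\bar{A}$ and $A$ are concentrated in degrees $0$ and $n$, a reduced cochain in $\mathrm{Hom}^{2-s}(\bar{A}^{\otimes s},A)$ is forced to vanish only when $s \not\equiv 2 \;(\mathrm{mod}\; n)$; for $s = n+2, 2n+2, \dots$ (hence for every even $s$ when $n=2$) the cochain groups are nonzero, and one must genuinely analyze cocycles modulo coboundaries for the quiver with relations. That analysis is precisely what occupies the proof of \cite[Lemma 4.21]{seidel-thomas99}, and it is where $n \geq 2$ enters in an essential way (for $n=1$ the statement is false, as the paper notes after the lemma). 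As written, your argument reduces the lemma to that computation without performing it. A smaller correction: the auxiliary length grading you invoke is not ``the second grading discussed in Section 2'' of the paper --- that second grading is the internal cohomological grading of $A$, with $|(k+1|k)|=0$ and $|(k|k+1)|=n$; the path-length grading (both arrows of length one) is an additional grading, well defined because the relations are homogeneous of length two, and it can indeed serve as an organizing tool in the Hochschild computation, but it must be kept distinct from the cohomological one.
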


This is false for $n = 1$, as demonstrated by the examples in \cite[Section 20]{seidel04} ($m \geq 5$) and \cite{lekili-perutz11} ($m \geq 2$).

\subsection{Algebraic geometry\label{subsec:mckay}}
We temporarily restrict to $n = 2$, and explain the algebro-geometric interpretation of $A = A_m^2$. Consider the quotient $\C^2/(\Z/m+1)$ by a cyclic subgroup (which is unique up to conjugacy) $\Z/m+1 \subset \mathit{SL}_2(\C)$. Let 
\begin{equation}
q: X \longrightarrow \C^2/(\Z/m+1)
\end{equation}
be the minimal resolution, and $C = q^{-1}(0) \subset X$ the exceptional divisor, which is a chain of $m$ rational curves $C = C_1 \cup \cdots \cup C_m$. Start with the bounded derived category of coherent sheaves on $X$ with support on $C$, and denote by $D$ the full subcategory of objects $E$ such that $q_*E$ is acyclic (as usual, we really mean the underlying chain level dg category). 

\begin{lemma}[Ishii-Ueda-Uehara] \label{th:mckay}
$D$ is quasi-equivalent to $A^{\mathit{perf}}$.
\end{lemma}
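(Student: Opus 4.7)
The plan is to exhibit an explicit $(A_m)$-chain of spherical objects in $D$, apply Lemma~\ref{th:formality} to get a fully faithful embedding $A^{\mathit{perf}} \hookrightarrow D$, and finally check essential surjectivity. The natural candidates are the line bundles
\begin{equation*}
Z_k = \mathcal{O}_{C_k}(-1), \qquad k = 1, \ldots, m,
\end{equation*}
viewed as torsion sheaves on $X$ supported on the $(-2)$-curve $C_k \iso \mathbb{P}^1$. Since $H^*(\mathbb{P}^1, \mathcal{O}(-1)) = 0$, each $Z_k$ satisfies $Rq_* Z_k = 0$ and hence lies in $D$.

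First I would verify that $(Z_1, \ldots, Z_m)$ is an $(A_m)$-chain of $2$-spherical objects. Since $C_k$ has normal bundle $\mathcal{O}_{\mathbb{P}^1}(-2)$, a standard local-to-global computation (using $\mathcal{E}xt^*_X(\mathcal{O}_{C_k},\mathcal{O}_{C_k}) = \wedge^* N^\vee_{C_k/X}$, and noting that tensoring with a line bundle on $C_k$ does not change the answer) gives $\mathrm{Ext}^*_X(Z_k, Z_k) \iso H^*(S^2; \C)$. Consecutive curves $C_k, C_{k+1}$ meet transversely at a single point, producing a one-dimensional $\mathrm{Hom}(Z_k, Z_{k+1})$ concentrated in degree $0$. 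Serre duality on $X$, valid because $X$ is a crepant resolution of the Gorenstein quotient $\C^2/(\Z/m+1)$ and therefore has $K_X \iso \mathcal{O}_X$, then yields a one-dimensional $\mathrm{Ext}^2(Z_k, Z_{k-1})$ and supplies the nondegenerate pairing required by the definition of sphericity. Non-adjacent $Z_k, Z_j$ have disjoint supports and hence vanishing $\mathrm{Ext}$'s. With these checked, Lemma~\ref{th:formality} applies (since $n = 2 \geq 2$) and produces a cohomologically full and faithful $A_\infty$-functor $A^{\mathit{perf}} \to D^{\mathit{perf}}$ sending $P_k$ to $Z_k$.

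The remaining and main difficulty is essential surjectivity: proving that $\{Z_1, \ldots, Z_m\}$ split-generates $D$. Note that $D$ is automatically closed under direct summands inside $D^b\mathit{Coh}_C(X)$ (the functor $Rq_*$ respects direct sums), so it suffices to establish triangulated generation. My approach would be to produce a bounded t-structure on $D$ whose heart $\mathcal{A}$ is a finite-length abelian category with simple objects exactly $Z_1, \ldots, Z_m$. Every object of $\mathcal{A}$ then admits a finite Jordan--H\"older filtration by the $Z_k$, and $D \simeq D^b(\mathcal{A})$ is generated by them. Such a t-structure arises as the restriction to $D$ of Bridgeland's perverse t-structure for the contraction $q \colon X \to \C^2/(\Z/m+1)$; the simples of its ``$Rq_*$-acyclic'' subheart are classically known to be the line bundles $\mathcal{O}_{C_k}(-1)$. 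Alternatively, one can invoke the Kapranov--Vasserot / Bridgeland--King--Reid McKay correspondence, which identifies $D^b\mathit{Coh}(X)$ with the derived category of modules over a quiver algebra and matches the subcategory $D$ with modules supported away from the ``framing'' vertex, making generation by the $Z_k$ manifest.
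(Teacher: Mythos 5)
Your overall strategy is the paper's: exhibit an $(A_m)$-chain of spherical objects in $D$, apply Lemma \ref{th:formality}, and then use split-closedness of $D$ plus generation by the $Z_k$ to upgrade the embedding to a quasi-equivalence. However, there is an error in your verification of the chain condition. You take the \emph{unshifted} sheaves $Z_k = \OO_{C_k}(-1)$ and claim that $\mathrm{Hom}(Z_k,Z_{k+1})$ is one-dimensional and concentrated in degree $0$. In fact $\mathrm{Hom}_X(\OO_{C_k}(-1),\OO_{C_{k+1}}(-1)) = 0$: any such map has image supported on the single point $C_k \cap C_{k+1}$, and $\OO_{C_{k+1}}(-1)$ has no nonzero subsheaf with zero-dimensional support (equivalently, multiplication by a local equation of $C_k$ is injective on $\OO_{C_{k+1}}(-1)$). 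The transverse intersection instead contributes a one-dimensional $\mathrm{Ext}^1$ in \emph{each} direction, as one sees from the local resolution $0 \to \OO_X(-C_k) \to \OO_X \to \OO_{C_k} \to 0$; Serre duality (with $K_X \iso \OO_X$) is consistent with this, since it pairs $\mathrm{Ext}^1$ with $\mathrm{Ext}^1$ rather than producing a $\mathrm{Hom}$--$\mathrm{Ext}^2$ split. So the unshifted collection is \emph{not} an $(A_m)$-chain in the sense required by Lemma \ref{th:formality}: the endomorphism algebra of $\bigoplus_k \OO_{C_k}(-1)$ has both kinds of arrows in degree $1$, which is not the grading of $A_m^2$. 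This is precisely why the objects \eqref{eq:spherical-sheaves} carry the staircase of shifts, $Z_k = \OO_{C_k}(-1)[k]$: then $\hom(Z_k,Z_{k+1})$ sits in degree $0$ and $\hom(Z_{k+1},Z_k)$ in degree $2$, as the definition demands. The error is easily repaired by inserting those shifts, but as written the application of Lemma \ref{th:formality} fails.

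Two further points of comparison. For split-closedness you note that $D$ is closed under direct summands inside $D^b\mathit{Coh}_C(X)$; that only suffices if the ambient category is itself idempotent complete, which is true but needs justification --- the paper gets it by identifying complexes on $X$ supported on $C$ with complexes on a compactification $\bar{X}$ whose cohomology is supported on $C$ (\cite{bezrukavnikov}) and invoking \cite{bv}. For generation, the paper simply quotes Ishii--Ueda--Uehara, whereas you propose to prove it via the heart of Bridgeland's perverse t-structure for $q$ (or the McKay correspondence), whose simple objects killed by $Rq_*$ are the $\OO_{C_k}(-1)[1]$; that is indeed how the generation statement is established in the literature, so this part of your plan is sound, though it amounts to reproving the cited result rather than a genuinely different argument.
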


\begin{proof}[Sketch of proof]
As stated in \cite[Section 2.2]{ishii-ueda-uehara06}, $D$ is generated by the objects
\begin{equation} \label{eq:spherical-sheaves}
Z_1 = \OO_{C_1}(-1)[1], \; \dots, \; Z_m = \OO_{C_m}(-1)[m].
\end{equation}
Note that if $\bar{X}$ is a compactification of $X$ to a smooth projective algebraic surface, the bounded derived category of coherent sheaves on $X$ with support on $C$ is the same as the bounded derived category of coherent sheaves on $\bar{X}$ whose cohomology is supported on $C$ (see e.g.\ \cite[Lemma 3]{bezrukavnikov}). The latter is split-closed (Karoubi complete, in other words) by general results from \cite{bv}. Moreover, the condition that $q_*E$ should be acyclic is obviously preserved under passing to direct summands, so it follows that $D$ is split-closed as well.

The rest of the argument is explained in \cite[Lemma 40]{ishii-ueda-uehara06}. The objects \eqref{eq:spherical-sheaves} form an $(A_m)$ chain of spherical objects. One applies Lemma \ref{th:formality} to get a cohomologically full and faithful embedding $A^{\mathit{perf}} \rightarrow D^{\mathit{perf}}$. Our previous observations about $D$ show that $D \htp D^{\mathit{perf}}$, and because of the generation statement, one gets a quasi-equivalence.
\end{proof}

\begin{proposition}[Ishii-Ueda-Uehara] \label{th:ishii-ueda-uehara}
Consider objects $E$ of $D$ which are nonzero and such that $H^*(\mathit{hom}_D(E,E))$ has total dimension $\leq 2$. The action of $\mathit{Br}_{m+1}$ on $D$ generated by the $T_{Z_k}$ acts transitively on quasi-isomorphism classes of such objects up to shifts.
\end{proposition}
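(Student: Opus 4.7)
My plan is to reduce the hypothesis to the assertion that $E$ is, up to shift, a $2$-spherical object in $D$, and then establish transitivity of the braid action on such objects by combining a Weyl-group argument on $K_0$ with a rigidity input.

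First, the reduction to the spherical case. The resolution $X$ is crepant (the canonical bundle is trivial in a neighbourhood of the exceptional divisor $C$), which upgrades $D$ to a Calabi-Yau category of dimension $2$: Serre duality gives a perfect pairing $H^i(\mathit{hom}_D(E,E)) \iso H^{2-i}(\mathit{hom}_D(E,E))^\vee$. Since $E \neq 0$, the identity is a nonzero class in $H^0$, so $\dim H^0 \geq 1$ and, by duality, $\dim H^2 \geq 1$. Combined with the assumption $\dim H^*(\mathit{hom}_D(E,E)) \leq 2$, this forces $H^0 \iso H^2 \iso \C$ and $H^1 = 0$. After a suitable shift we may therefore assume that $E$ is $2$-spherical.

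Second, transitivity of the braid action on K-classes. The action of $\mathit{Br}_{m+1}$ on $D$ descends to the action of the Weyl group $S_{m+1}$ on $K_0(D) \otimes \Q \iso \Q^m$ by reflections in the (symmetric) Euler form, which is the $(A_m)$ Cartan form; under this identification the simple roots are the classes $[Z_k]$. Since $E$ is spherical, $\chi(E,E) = 2$, so $[E]$ is a root. Weyl transitivity on roots then produces a braid $w$ such that $w \cdot [E] = \pm [Z_k]$ for some $k$, so after applying the categorical twist $T_w$ and perhaps an extra shift we may assume $[E] = [Z_k]$ in $K_0(D)$.

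Third, the uniqueness step: any $2$-spherical $E'$ in $D$ with $[E'] = [Z_k]$ is quasi-isomorphic to $Z_k$. I would prove this by computing the groups $H^*(\mathit{hom}_D(Z_j, E'))$ using Euler-form bounds on total dimensions, the spherical Serre duality pairings, and the fact that $(Z_1,\dots,Z_m)$ generates $D$ (so a morphism which is nonzero on all such hom-spaces and compatible with the $A_m^2$-action must be a quasi-isomorphism). Equivalently, and in keeping with the methods of Ishii-Ueda-Uehara, one appeals to the geometric model: objects of $D$ with the K-class of $\mathcal{O}_{C_k}(-1)[k]$ form an irreducible moduli family whose rigid points reduce to $Z_k$. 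This third step is the main obstacle: the braid-transitivity on $K_0$ is formal once the Weyl action is in place, but the passage from ``same K-class'' to ``same object up to shift'' is a genuine rigidity statement and is where the real work of the proposition lies.
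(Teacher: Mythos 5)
Your first step is exactly the only argument the paper itself supplies: Serre duality on the (locally Calabi--Yau, crepant) resolution forces $H^0$ and $H^2$ of the endomorphism algebra to be nonzero, so the dimension bound makes $E$ spherical. Beyond that, the paper does not prove the transitivity statement at all --- it is quoted as \cite[Lemma 38]{ishii-ueda-uehara06}, which in turn rests on the classification of spherical objects in \cite{ishii-uehara05} --- so everything after your first paragraph is an attempt to reprove that cited theorem, and this is where the gap lies. Your second step (the twists act on $K$-theory by reflections, classes of spherical objects have square $2$, hence are roots of the $A_m$ lattice, and the Weyl group is transitive on roots) is reasonable as a reduction, modulo the unverified identification of $K_0(D)$ with its Euler form as the $A_m$ root lattice; but it only reduces the proposition to your third step, which you acknowledge is the real content and which you do not prove.

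Neither of the two routes you sketch for that third step works as stated. Euler-form considerations give only alternating sums of dimensions, so they cannot ``compute'' the individual groups $H^*(\mathit{hom}_D(Z_j,E'))$ for a spherical $E'$ with $[E']=[Z_k]$: a priori such an object could have large hom-spaces in several degrees cancelling in $\chi$, and ruling this out is precisely the difficulty. The alternative appeal to ``an irreducible moduli family whose rigid points reduce to $Z_k$'' is an unsubstantiated assertion: moduli of complexes with a fixed $K$-class need not be irreducible, and rigidity of a point only makes it isolated, not unique, so no uniqueness follows. The actual proof in \cite{ishii-uehara05,ishii-ueda-uehara06} does not proceed by showing that the $K$-class determines a spherical object; it analyzes the cohomology sheaves of a spherical complex supported on the exceptional chain and runs an induction in which suitable twist functors strictly decrease a measure of complexity until one reaches some $\mathcal{O}_{C_k}(-1)$ up to shift. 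In short: your reduction to the spherical case matches the paper, your $K$-theoretic step is a sensible but partial reduction, and the rigidity statement ``same $K$-class $\Rightarrow$ same object up to shift,'' which carries the whole weight of the proposition, is left unproved and is not provable by the means you indicate.
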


Due to Serre duality, the conditions on $E$ imply that it must be a spherical object. With that in mind, the result is a reformulation of \cite[Lemma 38]{ishii-ueda-uehara06}, which in turn relies on results of \cite{ishii-uehara05}.

\subsection{Symplectic geometry}
For the following discussion, we assume $n \geq 2$. Consider $Q = Q_m^n$ as defined in \eqref{eq:an-equation}. Take $\omega_Q$ to be the restriction of the constant K{\"a}hler form on $\C^{n+1}$. It is well-known \cite{seidel98b, khovanov-seidel98} that $Q$ contains Lagrangian spheres $(V_1,\dots,V_m)$ which form an $(A_m)$ configuration. This means that they are pairwise transverse and
\begin{equation} \label{eq:chain-of-spheres}
\left\{
\begin{aligned}
& V_k \cap V_{k \pm 1} = \{point\}, \\
& V_j \cap V_k = \emptyset \quad \text{if } |j-k| \geq 2.
\end{aligned}
\right.
\end{equation}
The homology classes $[V_k]$ form a basis of $H_n(Q;\Z)$, which is in fact the basis in which we had written the intersection form in \eqref{eq:intersection-form}. Symplectically, one can think of $Q$ as the plumbing of the $V_k$. We will not explain in detail what the plumbing construction means, but one of its main features is this:

\begin{lemma} \label{th:plumbing}
Take any symplectic manifold $M^{2n}$ with the following properties. It is exact, meaning that $\omega_M = d\theta_M$, and the Liouville vector field dual to $\theta_M$ can be integrated for all positive times. Moreover, it contains an $(A_m)$ configuration of Lagrangian spheres $(L_1,\dots,L_m)$. Then there is a symplectic embedding $Q \rightarrow M$ which sends each $V_k$ to $L_k$.
\end{lemma}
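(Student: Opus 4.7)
The plan is to separate the construction into a local step, near the Lagrangian configuration, and a global step using the Liouville flow. Endow $Q$ with the Liouville structure coming from the restriction of the standard primitive $\tfrac{1}{2}\sum_j (x_j\, d\bar x_j - \bar x_j\, dx_j)$ on $\C^{n+1}$; a direct calculation shows that the resulting Liouville vector field $Z_Q$ has skeleton exactly equal to the $(A_m)$-chain $V_1 \cup \cdots \cup V_m$, so that $Q$ is a complete Liouville (in fact Weinstein) manifold.

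\emph{Step 1 (local normal form).} I would first construct a Liouville embedding $\phi\colon U \to M$ from a Liouville subdomain $U \subset Q$ containing $V_1 \cup \cdots \cup V_m$ (chosen so that $\partial U$ is transverse to $Z_Q$), sending each $V_k$ to $L_k$. This is a Weinstein-type normal form statement for $(A_m)$ configurations of Lagrangian spheres: for each $V_k$ separately, the ordinary Weinstein neighborhood theorem provides a symplectomorphism from a neighborhood of $V_k$ in $Q$ to one of $L_k$ in $M$; near each intersection point $V_k \cap V_{k+1}$, the pair of transversely intersecting Lagrangians admits a Darboux-Weinstein normal form (zero section and cotangent fibre in $T^*\R^n$). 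Interpolating between these local models on their overlaps using Moser's trick, and modifying $\theta_M$ by an exact form so that $\phi^*\theta_M = \theta_Q$, produces the desired $\phi$.

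\emph{Step 2 (Liouville extension).} Let $\psi_t^Q, \psi_t^M$ denote the time-$t$ Liouville flows; by hypothesis $\psi_t^M$ is defined for all $t \geq 0$. Because $U$ contains the skeleton, every point of $Q$ is uniquely of the form $\psi_t^Q(u)$ with $u \in \overline U$ and $t \geq 0$ (trace back along the flow until the first intersection with $\overline U$). Define
\begin{equation*}
\tilde\phi\bigl(\psi_t^Q(u)\bigr) = \psi_t^M\bigl(\phi(u)\bigr).
\end{equation*}
Using $(\psi_t^Q)^*\theta_Q = e^t\theta_Q$ and the analogous identity on $M$, one checks that $\tilde\phi^*\theta_M = \theta_Q$, so in particular $\tilde\phi^*\omega_M = \omega_Q$; clearly $\tilde\phi(V_k) = L_k$.

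\emph{Main obstacle.} The heart of the argument is Step 1, whose difficulty lies in simultaneously matching Weinstein models near each Lagrangian sphere with the intersection normal form near each $V_k \cap V_{k+1}$. Some care is required in choosing an order of gluing and in ensuring that the Moser interpolation near an intersection does not disturb the normal form already fixed on the neighbouring sphere. Once Step 1 is in place, Step 2 is essentially formal, and the hypothesis that $Z_M$ integrates for all positive times is used precisely there, to guarantee that the Liouville extension is defined on the full manifold $Q$.
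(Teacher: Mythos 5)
Your argument is essentially the paper's own: a local symplectic embedding of a neighbourhood $U$ of the $(A_m)$ configuration sending $V_k$ to $L_k$ and matching primitives, followed by extension over all of $Q$ using the Liouville flows, with the forward completeness of the Liouville field on $M$ invoked exactly where you invoke it. The only details to adjust are that the exact correction to $\theta_M$ should be taken compactly supported (as in the paper, $\theta_M + dK$), so that forward completeness of the modified Liouville field is not lost, and that instead of asserting that the restriction of the standard primitive to $Q$ has skeleton exactly $V_1 \cup \cdots \cup V_m$ (which is not obvious by ``direct calculation''), one should, as the paper does, choose $U$ and a primitive $\theta_Q$ with $d\theta_Q = \omega_Q$ such that $\partial U$ is of contact type and the Liouville flow gives a diffeomorphism $[0,\infty) \times \partial U \rightarrow Q \setminus U$.
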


\begin{proof}[Sketch of proof]
The local structure near any two $(A_m)$ configurations being the same, one finds a symplectic embedding $U \rightarrow M$, where $U \subset Q$ is a neighbourhood of $V_1 \cup V_2 \cup \dots \cup V_m$, which sends $V_k$ to $L_k$. The next step is the one we were referring to above, when mentioning plumbing. Namely, one can choose $U$ and a one-form $\theta_Q$ with $d\theta_Q = \omega_Q$ so that the following holds: $\partial U$ is a contact type hypersurface for the associated Liouville vector field, and the flow of that vector field yields a diffeomorphism $[0,\infty) \times \partial U \rightarrow Q \setminus U$. Given that, take a compactly supported function $K$ on $M$ such that $\theta_M + dK$ restricts to $\theta_Q$ on $\bar{U}$. Using the Liouville vector field associated to $\theta_M + dK$, the given embedding can be extended to the whole of $Q$.
\end{proof}

In fact, we will only use a particular consequence of this, namely the existence of symplectic embeddings $Q_m^n \rightarrow Q_{m+1}^n$, which can also be established in other ways.

To define the Fukaya category, we should choose an almost complex structure $J_Q$ with suitable convexity properties (to prevent pseudo-holomorphic discs from escaping to infinity). In our case, the standard complex structure will do. We should also choose a complex volume form $\eta_Q$ (even though $\eta_Q$ is required in the definition, any two choices lead to quasi-isomorphic Fukaya categories, since $H^1(Q) = 0$). Objects of $\mathit{Fuk}(Q)$ are closed Lagrangian submanifolds $L \subset Q$ which are exact (the relative class $[\omega_Q] \in H^2(Q,L;\R)$ vanishes) and graded with respect to $\eta_Q$. The grading induces an orientation on $L$, and we require the additional choice of a {\em Spin} structure, as well as a representation $\rho: \pi_1(L) \rightarrow \mathit{GL}_r(\C)$ (for some choice of base point and some $r = \mathrm{rank}(\rho) \geq 1$; equivalently one can work with flat complex vector bundles). Morphisms in $\mathit{Fuk}(Q)$ are Floer cochain complexes with their natural differential, so that
\begin{equation}
H^*(\mathit{hom}_{\mathit{Fuk}(Q)}(L_0,L_1)) \iso \mathit{HF}^*(L_0,L_1).
\end{equation}
The Euler characteristic is
\begin{equation} \label{eq:euler}
\chi(\mathit{HF}^*(L_0,L_1)) = (-1)^{n(n+1)/2} \mathrm{rank}(\rho_0)\mathrm{rank}(\rho_1)\, [L_0] \cdot [L_1],
\end{equation}
where $\rho_k$ are the representations on $L_k$ (this agrees with the intuitive way of thinking of the objects as Lagrangian submanifolds with multiplicities). Next, if $L_0$ and $L_1$ have the same underlying Lagrangian submanifold $L$, grading and {\em Spin} structure, but come equipped with representations $\rho_0,\rho_1$, then Floer cohomology agrees with the ordinary cohomology with twisted coefficients:
\begin{equation} \label{eq:pss}
\mathit{HF}^*(L_0,L_1) \iso H^*(L;\mathit{Hom}(\rho_0,\rho_1)).
\end{equation}
\eqref{eq:euler} is straightforward from the definition, wheras \eqref{eq:pss} can be proved by adapting any of the arguments that are familiar for the case of trivial representations.

\begin{lemma}
There is a quasi-equivalence
\begin{equation} \label{eq:algebraic-fukaya-1}
A_m^{n,\mathit{perf}} \stackrel{\iso}{\longrightarrow} \mathit{Fuk}(Q_m^n)^{\mathit{perf}}.
\end{equation}
which sends the modules $P_k$ to the Lagrangian spheres $V_k$.
\end{lemma}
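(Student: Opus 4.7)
The plan is to verify that the Lagrangian spheres $(V_1, \dots, V_m)$ form an $(A_m)$-chain of spherical objects in $\mathit{Fuk}(Q_m^n)$, apply Lemma \ref{th:formality} to obtain a cohomologically full and faithful embedding, and then upgrade this to a quasi-equivalence via a split-generation argument.

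First, each $V_k$ is an exact Lagrangian sphere in the exact symplectic manifold $Q_m^n$, which (after fixing a grading and a \emph{Spin} structure, and viewing it as an object with the trivial one-dimensional representation) defines an object of $\mathit{Fuk}(Q_m^n)$. Using the self-Floer cohomology isomorphism \eqref{eq:pss} with $\rho_0 = \rho_1$ trivial, $\mathit{HF}^*(V_k,V_k) \iso H^*(S^n;\C)$, which gives the required two-dimensional cohomology concentrated in degrees $0$ and $n$; finite dimensionality of $\mathit{HF}^*(V_j,V_k)$ for all $j,k$ is immediate from \eqref{eq:chain-of-spheres}; and nondegeneracy of the composition pairing reduces to Poincar\'e duality of Floer cohomology. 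Thus each $V_k$ is spherical of dimension $n$. For adjacent spheres, \eqref{eq:chain-of-spheres} says that $V_k \cap V_{k+1}$ is a single transverse point, so the Floer complex $\mathit{CF}^*(V_k,V_{k+1})$ has one generator, which by a suitable (coherent) choice of gradings of the $V_k$ lies in degree $0$. The Poincar\'e duality pairing then forces $\mathit{HF}^*(V_{k+1},V_k)$ to be one-dimensional in degree $n$. For $|j-k| \geq 2$ the two spheres are disjoint, so $\mathit{HF}^*(V_j,V_k) = 0$. Hence $(V_1, \dots, V_m)$ is an $(A_m)$-chain of spherical objects, and Lemma \ref{th:formality} produces a cohomologically full and faithful functor $A_m^{n,\mathit{perf}} \to \mathit{Fuk}(Q_m^n)^{\mathit{perf}}$ sending $P_k \mapsto V_k$.

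It remains to show that the functor is essentially surjective after passing to split-closures, i.e.\ that the $V_k$ split-generate $\mathit{Fuk}(Q_m^n)^{\mathit{perf}}$. The natural tool is the Lefschetz fibration on $Q_m^n$ obtained by projecting to the $x_{n+1}$-coordinate, whose regular fibre is $T^*S^{n-1}$ (the $(A_1)$ Milnor fibre in one dimension lower) and whose vanishing cycles, for a distinguished basis of paths, are precisely the $V_k$. The split-generation of the compact Fukaya category by the vanishing cycles of a Lefschetz fibration on an exact symplectic manifold with suitable convexity is the content of Seidel's theorem \cite[Corollary 5.8]{seidel04} (once the objects are allowed to carry arbitrary local systems, the argument goes through verbatim). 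This gives essential surjectivity onto the split-closure.

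The main obstacle is the split-generation step: the verification that the $V_k$ form an $(A_m)$-chain of spherical objects is a direct Floer-theoretic computation using transversality and the standard $\mathit{HF}^*$-$H^*$ isomorphism for a single Lagrangian, whereas split-generation requires the full Lefschetz-fibration machinery of \cite{seidel04}. Once both are in hand, the functor is cohomologically full and faithful and hits a split-generating set, hence is a quasi-equivalence of split-closed triangulated $A_\infty$-categories.
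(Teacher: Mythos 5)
The first half of your argument (checking that the $V_k$, with suitable gradings, form an $(A_m)$-chain of spherical objects and invoking Lemma \ref{th:formality} to get a cohomologically full and faithful embedding sending $P_k \mapsto V_k$) is correct and is exactly what the paper does. The gap is in the split-generation step, which you yourself identify as the crux. The result you appeal to does not exist in the form you state: \cite[Corollary 5.8]{seidel04} is not a theorem about Lefschetz fibrations at all, but an abstract algebraic criterion, roughly that if some composition of twist functors along the $V_k$ carries an object $M$ (with finite-dimensional total morphism spaces) to a nontrivial shift of itself, then $M$ is a direct summand of an object built from the $V_k$ by cones and shifts. There is no general theorem asserting that vanishing cycles of a Lefschetz fibration split-generate the \emph{compact} Fukaya category of the total space; the generation statements available concern thimbles in the category of the fibration, and they do not directly yield what you need here. (As a side point, for the projection to $x_{n+1}$ the vanishing cycles live in the fibre $T^*S^{n-1}$, and the $V_k$ are matching cycles over paths joining critical values, not vanishing cycles of that fibration.)

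What the paper actually supplies, and what your proposal is missing, are the two geometric inputs that make the criterion of \cite[Corollary 5.8]{seidel04} applicable: (i) the identification of the Dehn twists $\tau_{V_k}$, acting on objects of $\mathit{Fuk}(Q_m^n)$, with the algebraic twist functors $T_{V_k}$, via \cite[Corollary 17.17]{seidel04} and ultimately the Floer long exact sequence \cite{seidel-exact}, suitably generalized to objects carrying flat bundles; and (ii) the fact that the composition of Dehn twists corresponding to the central element $\delta^2 \in \mathit{Br}_{m+1}$ is isotopic, in the graded symplectic automorphism group, to the nontrivial shift $[4m-(2m+2)n]$ \cite[Section 20a]{seidel04} (the geometric counterpart of Lemma \ref{th:central-shift}). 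Given these, every object of $\mathit{Fuk}(Q_m^n)$ has finite-dimensional Floer cohomology with the $V_k$ and with itself, so iterating the twist triangles as in the proof of Lemma \ref{th:detect-perfect} shows the connecting map to the shifted copy vanishes eventually, whence every object is split-generated by the $V_k$ and the embedding \eqref{eq:algebraic-fukaya-1} is a quasi-equivalence. Without establishing (i) and (ii), or substituting some other split-generation criterion with a genuine proof, your argument does not close.
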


\begin{proof}[Sketch of proof] This is proved in \cite[Section 20]{seidel04} for the Fukaya category without flat vector bundles. We briefly summarize the argument, in order to explain how it extends to the case considered here. Make the $V_k$ into objects of $\mathit{Fuk}(Q)$, choosing the gradings in such a way that 
$\mathit{HF}^*(V_k,V_{k+1}) \iso \C$ lies in degree $0$. Because of general Poincar{\'e} duality properties of Floer cohomology, they then form an $(A_m)$-chain of spherical objects, and Lemma \ref{th:formality} yields a cohomologically full and faithful embedding as in \eqref{eq:algebraic-fukaya-1}.

The Dehn twists $\tau_{V_k}$ satisfy braid relations up to isotopy, hence generate a homomorphism $\mathit{Br}_{m+1} \rightarrow \pi_0(\mathit{Symp}^c(M))$. On the level of the action on objects of the Fukaya category, the Dehn twists can be identified with the algebraic twists $T_{V_k}$. This follows from \cite[Corollary 17.17]{seidel04}, ultimately a consequence of the long exact sequence for Floer cohomology groups \cite{seidel-exact}. To be precise, in order to apply to the current context the exact sequence should be generalized so to allow Lagrangian submanifolds with flat line bundles, but that is straightforward. One can show that the composition of Dehn twists corresponding to the central element $\delta^2 \in \mathit{Br}_{m+1}$ is isotopic to a nontrivial shift $[4m-(2m+2)n]$ in the graded symplectic automorphism group of $M$ \cite[Section 20a]{seidel04} (the geometric analogue of Lemma \ref{th:central-shift}). This implies that the $V_k$ are split-generators, hence that the previously constructed embedding is a quasi-equivalence.
\end{proof}

%This time, the choice is relevant, since it affects gradings (see Lemma \ref{th:am-grading} below).
%
%\begin{lemma} \label{th:embedding-2}
%There is a symplectic embedding $Q = Q_m^n \hookrightarrow \tilde{Q} = \tilde{Q}_m^n$.
%\end{lemma}
%
%\begin{proof}[Sketch of proof]
%This is similar to Lemma \ref{th:embedding-1}. Consider $Q \setminus \{x_{n+1} = \xi\}$ for some $\xi^{n+1} \neq 1$. Equip it with a %K{\"a}hler structure which is induced from a product structure of the form
%\begin{equation}
%\textstyle \frac{i}{2} dx_1 \wedge d\bar{x}_1 + \cdots + \frac{i}{2} dx_n \wedge d\bar{x}_n + \frac{i \psi(x_{n+1})}{2} dx_{n+1} %\wedge d\bar{x}_{n+1},
%\end{equation}
%where $\psi(z) > 0$ is a function of $z \in \C \setminus \{\xi\}$, which equals $1/|z|^2$ outside a compact subset. A Moser-style %argument shows that the symplectic isomorphism of the resulting space is independent of the choice of $\psi$. 
%
%Now fix a relatively compact open subset $U$  
%\end{proof}

\section{Proofs of the main theorems}

\subsection{Algebraic results}
Take $A = A_m^n$ for some $m,n \geq 2$. We will now consider spherical objects in $A^{\mathit{perf}}$. This relies on our general equivariance theorems to reduce to the case $n = 2$, where the algebro-geometric results of Ishii-Ueda-Uehara can be applied.

\begin{lemma} \label{th:algebraic-1}
Let $M_0$ be an object of $A^{\mathit{perf}}$, with $H^*(\mathit{hom}_{A^{\mathit{perf}}}(M_0,M_0)) \iso H^*(S^n;\C)$ as a graded vector space. Suppose that $m \neq 1$. Then there is another object $M_1$, obtained from the module $P_1$ by applying some autoequivalence in the group generated by $T_{P_1},\dots,T_{P_m}$, such that $H^*(\mathit{hom}_{A^{\mathit{perf}}}(M_0,M_1))$ has dimension exactly $1$.
\end{lemma}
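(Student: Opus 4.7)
The plan is threefold: lift $M_0$ to the bigraded setting via equivariance, reduce to the algebro-geometric case $n=2$, and apply the Ishii-Ueda-Uehara classification.

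First, I would invoke Theorem~\ref{th:main4} on $M_0$. The algebra $A = A_m^n$ is finite-dimensional, hence proper as a dg algebra with zero differential, and its internal grading furnishes the required rational $\C^*$-action. Our hypothesis $H^*(\mathit{hom}_{A^{\mathit{perf}}}(M_0,M_0)) \iso H^*(S^n;\C)$ together with $n \geq 2$ gives $H^0 \iso \C$ (simple) and $H^1 = 0$ (rigid). Theorem~\ref{th:main4} thus produces a bigraded lift $\BM_0 \in A^{\Bperf} = A_m^{n,\Bperf}$.

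Next, I would reduce to $n=2$ via Remark~\ref{th:scaling}: the indecomposable object $\BM_0$ arises, after rescaling the second grading, from an object $\BN_0 \in A_m^{2,\Bperf}$ (passing through $A_m^{1,\Bperf}$ when $n$ is odd and then embedding into $A_m^{2,\Bperf}$ as the ``even'' part). Collapsing through diagram~\eqref{eq:collapse} yields $N_0 \in A_m^{2,\mathit{perf}}$; the compactness identity~\eqref{eq:collapse-2} together with our endomorphism hypothesis gives $\dim H^*(\mathit{hom}(N_0,N_0)) = 2$, so that $N_0$ is nonzero and meets the premise of Proposition~\ref{th:ishii-ueda-uehara}.

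Now I apply Ishii-Ueda-Uehara: there exist $k \in \{1,\dots,m\}$ and $\beta \in \mathit{Br}_{m+1}$ with $\beta(P_k) \simeq N_0$ up to shift in $A_m^{2,\mathit{perf}}$. Each $P_j$ separately satisfies the same total-endomorphism-dimension-$2$ hypothesis, so the proposition also furnishes a braid $\gamma \in \mathit{Br}_{m+1}$ with $\gamma(P_1) \simeq P_{k'}$ up to shift, where $k'$ is any neighbour of $k$ in the $A_m$-chain (e.g.\ $k'=k+1$ if $k<m$, else $k'=k-1$). Since $\mathit{hom}(P_k,P_{k'})$ has total dimension exactly one, it follows that $\dim H^*(\mathit{hom}(N_0,\beta\gamma(P_1))) = 1$ in $A_m^{2,\mathit{perf}}$. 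Applying the same braid word $\beta\gamma$ in $A_m^{n,\mathit{perf}}$ — the twist action is given by formula~\eqref{eq:twist-triangle} in both categories and is compatible with the collapse — set $M_1 = (\beta\gamma)(P_1)$ to conclude $\dim H^*(\mathit{hom}_{A^{\mathit{perf}}}(M_0,M_1)) = 1$.

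The hard part will be making the reduction from $A_m^{n,\Bperf}$ down to $A_m^{2,\Bperf}$ precise — particularly the odd-$n$ case through $A_m^{1,\Bperf}$ — and verifying that total hom-dimensions are preserved under the scaling identifications and the collapse~\eqref{eq:collapse-2}, in the way used to transport the $\dim = 1$ count back from $A_m^{2,\mathit{perf}}$ to $A_m^{n,\mathit{perf}}$.
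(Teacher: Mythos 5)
Your overall skeleton matches the paper's: use Theorem \ref{th:main4} (the grading giving the rational $\C^*$-action as in Example \ref{th:homogeneous}) to produce a bigraded lift, transfer to $A_m^2$ by the degree-scaling of Remark \ref{th:scaling}, invoke Proposition \ref{th:ishii-ueda-uehara} via Lemma \ref{th:mckay}, and transport the one-dimensional hom count back through \eqref{eq:collapse-2}. The cosmetic difference that you apply transitivity twice (once to the unknown object, once to move $P_1$ to a neighbour $P_{k'}$), whereas the paper applies it once and takes $\bar{M}_1 = \phi^{-1}(\bar{P}_2)$, is harmless, and your use of $m \neq 1$ to guarantee a neighbour is the same as the paper's.

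The genuine gap is your assertion that Theorem \ref{th:main4} ``produces a bigraded lift $\BM_0 \in A^{\Bperf}$'', and likewise that the rescaled object lies in $A_m^{2,\Bperf}$. The theorem only yields a quasi-isomorphic module with a naive $\C^*$-action, i.e.\ an object of $A^{\Bmod}$; perfectness of the collapse does not formally imply bigraded perfectness of the lift, and finite-dimensional cohomology alone certainly does not (the simple module over $\C[t]/t^2$ already fails to be perfect). This is precisely what Lemma \ref{th:detect-perfect} is for: one first uses the injectivity of \eqref{eq:collapse-2} to see that the total bigraded endomorphism space has dimension at most $2$, then the finiteness criterion (whose proof rests on the central braid element acting by a nontrivial bigraded shift, Lemma \ref{th:central-shift}) to conclude $\BM_0 \in A^{\Bperf}$, and the same criterion again after rescaling to get the object of $A_m^{2,\Bperf}$. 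Without this step your argument breaks at two points: Proposition \ref{th:ishii-ueda-uehara} applies to objects of $D \htp A_m^{2,\mathit{perf}}$, so the collapsed object over $A_m^2$ must be known to be perfect; and \eqref{eq:collapse-2} is in general only injective, so at the final step you could conclude $\dim H^*(\mathit{hom}_{A^{\mathit{perf}}}(M_0,M_1)) \geq 1$ but not the required equality, since the isomorphism statement in \eqref{eq:collapse-2} needs $\BM_0$ perfect in the bigraded sense. By contrast, the scaling and collapse bookkeeping you single out as ``the hard part'' is comparatively routine once perfectness is in hand, exactly as carried out in the paper.
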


\begin{proof}
The main step is to use Theorem \ref{th:main4}, which applies to our case as described in Example \ref{th:homogeneous}. The outcome is that there is some $\BM_0$ in $A^{\Bmod}$ which maps to $M_0$ under the collapsing map $A^{\Bmod} \rightarrow A^{\mathit{mod}}$. By definition of that functor, the cohomology of $\BM_0$ is a bigraded version of that of $M_0$, hence finite-dimensional. Moreover, because of the general injectivity of \eqref{eq:collapse-2}, we know that 
\begin{equation} \label{eq:leq2}
\mathrm{dim}_\C \, \bigoplus_i H^*(\mathit{hom}_{A^{\Bmod}}(\BM_0,\BM_0\{i\})) \leq 2. 
\end{equation}
In view of Lemma \ref{th:detect-perfect}, it follows that $\BM_0$ lies in $A^{\Bperf}$. Since \eqref{eq:collapse-2} is an isomorphism for perfect modules, this also shows that equality holds in \eqref{eq:leq2}. Note that in particular, $\BM_0$ is nonzero and indecomposable (since all its endomorphisms of bidegree $(0,0)$ are multiples of the identity).

Write $\bar{A} = A_m^2$, and $\bar{P}_k = (k)\bar{A}$. Remark \ref{th:scaling} allows us to transfer indecomposable objects from $A^{\Bmod}$ to $\bar{A}^{\Bmod}$, while preserving the total dimension of the cohomology and of bigraded morphism spaces. Therefore, we get a corresponding object $\bar{\BM}_0$ of $\bar{A}^{\Bmod}$, which still has finite-dimensional cohomology, and bigraded endomorphism space of total dimension $2$. Again applying Lemma \ref{th:detect-perfect}, one finds that $\bar{\BM}_0$ lies in $\bar{A}^{\Bperf}$. Let's collapse the bigrading and consider the associated object $\bar{M}_0$ of $\bar{A}^{\mathit{perf}}$.

Proposition \ref{th:ishii-ueda-uehara} and Lemma \ref{th:mckay} apply, showing that there is some autoequivalence $\phi$ of $\bar{A}^{\mathit{perf}}$ generated by $T_{\bar{P}_1},\dots,T_{\bar{P}_m}$, such that $\phi(\bar{M}_0) \iso \bar{P}_1[s]$ for some $s$. Take $\bar{M}_1 = \phi^{-1}(\bar{P}_2)$. Because it is constructed through a sequence of twists along the $\bar{P}_k$, it is easy to see that $\bar{M}_1$ lifts to an object $\bar{\BM}_1$ of $\bar{A}^{\Bperf}$. Again using the fact that \eqref{eq:collapse-2} is an isomorphism for perfect modules, one sees that
\begin{equation} \label{eq:oned}
\mathrm{dim}_\C \, \bigoplus_i H^*(\mathit{hom}_{\bar{A}^\Bperf}(\bar{\BM}_0,\bar{\BM}_1\{i\})) = 1.
\end{equation}
There is a corresponding object $\BM_1$ in $A^{\Bperf}$ constructed using the same sequence of twists, and the pair $(\BM_0,\BM_1)$ will satisfy the analogue of \eqref{eq:oned}. The image $M_1$ of $\bar{M}_1$ in $A^{\mathit{perf}}$ has the desired property.
\end{proof}

\begin{remark}
In principle, one could try to replace Theorem \ref{th:main4} by an application of its algebro-geometric counterpart, Theorem \ref{th:polishchuk}. As an example, consider the three-dimensional case. One can find a smooth toric Calabi-Yau threefold $U$ and compactly supported sheaves $Z_1$,\dots,$Z_m$ on it, which form an $(A_m)$ configuration of spherical objects in the category $D^b\mathit{Coh}(U)$, see the discussion in \cite[Section 3f]{seidel-thomas99}. As part of the toric symmetry, this carries an action of $G = \C^*$ which acts with weight $3$ on a suitably chosen holomorphic volume form on $U$. Choose a smooth toric compactification $X \supset U$. One gets cohomologically full and faithful embeddings
\begin{equation}
\xymatrix{
A_m^{3,\Bperf} \ar@{^{(}->}[rr] \ar[d] && D^b\mathit{Coh}_G(X) \ar[d] \\
A_m^{3,\mathit{perf}} \ar@{^{(}->}[rr] && D^b\mathit{Coh}(X)
}
\end{equation} 
Given an object $M$ of $A_m^{3,\mathit{perf}}$ whose endomorphism ring is $H^*(S^3;\C)$, one can first map it to an object $E$ of $D^b\mathit{Coh}(X)$, and then use Theorem \ref{th:polishchuk} to lift $E$ to an object $\mathbf{E}$ of $D^b\mathit{Coh}_G(X)$. One knows as in Lemma \ref{th:central-shift} that a suitable composition of twist functors along the $Z_k$ takes $E$ to itself up to a nontrivial shift $[-2m-6]$. Given that the $Z_k$ can be made into equivariant objects $\mathbf{Z}_k$, one can then apply the same argument as in the proof of Lemma \ref{th:detect-perfect} to show that $\mathbf{E}$ lies in the subcategory split-generated by the twisted versions $\mathbf{Z}_k \otimes \chi^l$ (for $1 \leq k \leq m$, and $l \in \Z$ parametrizing characters of $G$), which means in the image of the embedding of $A_m^{3,\Bperf}$. This provides a lift $\BM$ of $M$ to $A_m^{3,\Bperf}$. We will not pursue this trick further, because it is artificial and only an apparent simplification: the proof of Theorem \ref{th:polishchuk} given in \cite{polishchuk08}, while a good deal shorter, relies on tools that are more abstract than our proof of Theorem \ref{th:main4}.
\end{remark}

\begin{lemma} \label{th:algebraic-2}
Let $M_0,M_1$ be two objects of $A^{\mathit{perf}}$ such that $H^*(\mathit{hom}_{A^{\mathit{perf}}}(M_i,M_i)) \iso H^*(S^n;\C)$. Suppose that $m$ is even. Suppose also that for all $k$, 
\begin{equation} \label{eq:mod2-assumption}
\mathrm{dim}_\C\,\, H^*(\mathit{hom}_{A^{\mathit{perf}}}(P_k,M_0)) \equiv 
\mathrm{dim}_\C\,H^*(\mathit{hom}_{A^{\mathit{perf}}}(P_k,M_1)) \;\; \mathrm{mod} \; 2.
\end{equation}
Then $H^*(\mathit{hom}_{A^{\mathit{perf}}}(M_0,M_1))$ is nonzero.
\end{lemma}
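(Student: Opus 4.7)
The plan is to parallel the proof of Lemma \ref{th:algebraic-1}: lift to the bigraded setting via Theorem \ref{th:main4}, transfer to the $n=2$ case via Remark \ref{th:scaling}, and reduce to the elementary intersection-form argument already used for even $n$ in the Introduction (see \eqref{eq:mod8}). The role of the hypothesis ``$m$ even'' will be to guarantee that the mod-$2$ reduction of the Cartan matrix is invertible, so that the dimension-parity hypothesis \eqref{eq:mod2-assumption} promotes to a mod-$2$ equality of classes in the $K$-theory lattice of the $n=2$ category.

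Applying Theorem \ref{th:main4} to each $M_i$ and repeating the perfection argument from Lemma \ref{th:algebraic-1}, I would lift $M_0,M_1$ to objects $\BM_0,\BM_1 \in A^{\Bperf}$. Since \eqref{eq:collapse-2} is an isomorphism whenever the source is perfect, the hypothesis \eqref{eq:mod2-assumption} translates to a parity statement for $\mathrm{dim}_\C \, \bigoplus_j H^*(\mathit{hom}_{A^{\Bperf}}(P_k,\BM_i\{j\}))$. Transferring via Remark \ref{th:scaling} to $\bar A = A_m^2$ yields indecomposable $\bar{\BM}_0,\bar{\BM}_1 \in \bar A^{\Bperf}$ with $P_k \leftrightarrow \bar P_k$, while preserving total bigraded dimensions of morphism spaces; collapsing the bigrading then produces $\bar M_0,\bar M_1 \in \bar A^{\mathit{perf}}$ still satisfying the analogous parity condition. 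Serre duality in the Calabi--Yau-$2$ category $\bar A^{\mathit{perf}} \htp D$ (Lemma \ref{th:mckay}) forces each $\bar M_i$ to have self-$\mathit{hom}$ concentrated in degrees $0$ and $2$, so both are spherical.

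I would then pass to $K$-theory. The symmetric bilinear form $(X,Y) := \chi(\mathit{hom}_{\bar A^{\mathit{perf}}}(X,Y))$ has matrix $-A_m$ in the basis $\{[\bar P_k]\}$, which is the $n=2$ case of \eqref{eq:intersection-form}; it is even and takes value $-2$ on each $[\bar M_i]$. Since $\chi \equiv \mathrm{dim}_\C \, H^* \pmod 2$, the parity hypothesis becomes
\begin{equation*}
([\bar P_k],\,[\bar M_0]-[\bar M_1]) \equiv 0 \pmod 2 \quad \text{for all } k,
\end{equation*}
and because $\det(-A_m) = \pm(m+1)$ is odd precisely when $m$ is even, the matrix of the pairing is invertible modulo $2$ and $[\bar M_0] \equiv [\bar M_1] \pmod 2$ in the $K$-theory lattice. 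The mod-$8$ computation \eqref{eq:mod8} then forces $([\bar M_0],[\bar M_1]) \equiv -2 \pmod 4$, so $\chi(\mathit{hom}_{\bar A^{\mathit{perf}}}(\bar M_0,\bar M_1)) \neq 0$ and in particular $H^*(\mathit{hom}_{\bar A^{\mathit{perf}}}(\bar M_0,\bar M_1)) \neq 0$.

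Finally, I would propagate nonvanishing back. A nonzero class in $H^*(\mathit{hom}(\bar M_0,\bar M_1))$ comes, via the perfect-case isomorphism \eqref{eq:collapse-2}, from $H^*(\mathit{hom}_{\bar A^{\Bperf}}(\bar{\BM}_0,\bar{\BM}_1\{j\}))$ for some $j$; transferring back via Remark \ref{th:scaling} preserves dimensions; and collapsing once more yields $H^*(\mathit{hom}_{A^{\mathit{perf}}}(M_0,M_1)) \neq 0$. I expect the main obstacle to lie in correctly tracking the dimension-parity hypothesis through the lift, transfer, and collapse, together with cleanly identifying ``$m$ even'' with the mod-$2$ invertibility that converts parity data into a $K$-theoretic congruence; once that is handled, the rest is a transcription of the machinery already assembled for Lemma \ref{th:algebraic-1} together with the elementary argument of \eqref{eq:mod8}.
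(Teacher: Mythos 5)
Your proposal is correct, but it takes a genuinely different route through the key step. After the reduction to $n=2$ (which you perform exactly as the paper does, via Theorem \ref{th:main4}, Lemma \ref{th:detect-perfect} and Remark \ref{th:scaling}), the paper invokes Proposition \ref{th:ishii-ueda-uehara} together with the quasi-equivalence $\bar{A}^{\mathit{perf}} \iso \mathit{Fuk}(Q_m^2)^{\mathit{perf}}$ to realize $\bar M_0,\bar M_1$ as honest Lagrangian spheres $L_0,L_1 \subset Q_m^2$, and then runs the intersection-form argument \eqref{eq:mod8} in $H_2(Q_m^2)$. You instead stay entirely algebraic: you use that $K_0(\bar A^{\mathit{perf}})$ is free on the $[\bar P_k]$, that the Euler pairing $\chi(\mathit{hom}(\cdot,\cdot))$ is symmetric and even (2-CY Serre duality via Lemma \ref{th:mckay}, diagonal entries $2$), that its Gram matrix has determinant $\pm(m+1)$ and so is invertible mod $2$ exactly when $m$ is even, and that $\chi \equiv \dim \pmod 2$ converts \eqref{eq:mod2-assumption} into $[\bar M_0]\equiv[\bar M_1]$ in $K_0 \otimes \Z/2$; the Serre-duality observation that total endomorphism dimension $2$ forces concentration in degrees $0$ and $2$ (the same remark the paper makes after Proposition \ref{th:ishii-ueda-uehara}) gives $\chi(\mathit{hom}(\bar M_i,\bar M_i)) = 2$, and then the mod-$8$ evenness argument yields $\chi(\mathit{hom}(\bar M_0,\bar M_1)) \equiv 2 \pmod 4 \neq 0$. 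This bypasses the Ishii--Ueda--Uehara classification (and the Fukaya category of $Q_m^2$) entirely for this lemma, which is consistent with the paper's own remark in Section \ref{sec:alternative} that Theorems \ref{th:arnold} and \ref{th:abelian} can be obtained without that input; your $K$-theoretic version is arguably cleaner than the Cardy-condition variant sketched there, while the paper's route has the virtue of reusing verbatim the machinery needed anyway for Lemma \ref{th:algebraic-1}, where the classification is genuinely indispensable. Two small points: the Euler form in the basis $[\bar P_k]$ has diagonal $+2$ and adjacent entries $+1$ (it is conjugate to the Cartan matrix, not its negative, and takes value $+2$ on spherical classes) --- harmless for your mod-$2$/mod-$4$ computations; and when transferring the pair $(\BM_0,\BM_1)$ via Remark \ref{th:scaling} one should note that only one residue class of shifts $\{j\}$ contributes to $\bigoplus_j \mathit{hom}(\BM_0,\BM_1\{j\})$, so the total dimension is indeed preserved even though each module is transferred with its own shift ambiguity.
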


\begin{proof}
Arguing exactly as in Lemma \ref{th:algebraic-1}, one can reduce the situation to the case $n = 2$, which we will concentrate on for the rest of the argument. One can then use Lemma \ref{th:ishii-ueda-uehara} (and the correspondence between Dehn twists and twist functors) to show that the images of $M_i$ under the quasi-equivalence $A^{\mathit{perf}} \iso \mathit{Fuk}(Q_m^2)^{\mathit{perf}}$ are quasi-isomorphic to Lagrangian spheres $L_i$. The assumption \eqref{eq:mod2-assumption} translates to
\begin{equation} \label{eq:even}
[V_k] \cdot [L_0] \equiv [V_k] \cdot [L_1] \;\; \mathrm{mod} \; 2.
\end{equation}
The intersection form on $H_2(Q_m^2)$, written down in \eqref{eq:intersection-form}, has determinant $(-1)^{m} (m+1)$, which is odd for even $m$. Hence \eqref{eq:even} implies that $[L_0] \equiv [L_1]$ mod $2$. As already explained in \eqref{eq:mod8}, we then have $[L_0] \cdot [L_1] \neq 0$, hence $\mathit{HF}^*(L_0,L_1) \neq 0$, which means $H^*(\mathit{hom}_{A^{\mathit{perf}}}(M_0,M_1)) \neq 0$.
\end{proof}

%\begin{remark}
%We have used symplectic geometry and the intersection pairing here, but one could replace that by an algebro-geometric counterpart (involving the Mukai %pairing on the Grothendieck group $K^0(D)$) or even by a purely algebraic argument.
%\end{remark}

\subsection{Geometric applications}
We consider the manifolds $Q = Q_m^n$, where $m \geq 1$ and $n \geq 2$.

\begin{proof}[Proof of Theorem \ref{th:primitive}] 
Assume first that $m \geq 2$. Let $L_0 \subset Q$ be a Lagrangian submanifold which is a rational homology sphere and {\em Spin}. Lemma \ref{th:algebraic-1} implies that there is a Lagrangian sphere $L_1$ (in fact one obtained from the $V_k$ by the braid group action) such that $\mathit{HF}^*(L_0,L_1)$ is one-dimensional. Passing to Euler characteristics implies the desired result. The remaining case $m = 1$ is known already \cite{seidel-k}, but one could also derive it by embedding $Q_1^n = T^*\!S^n \hookrightarrow Q_2^n$.
\end{proof}

\begin{proof}[Proof of Theorem \ref{th:arnold}]
Suppose first that $m$ is even. By assumption, $[V_k] \cdot [L_0] = [V_k] \cdot [L_1]$ mod $2$ for all $k$. Lemma \ref{th:algebraic-2} then implies that $\mathit{HF}^*(L_0,L_1) \neq 0$. One extends the argument to odd $m$ by embedding $Q_m^n \hookrightarrow Q_{m+1}^n$, which is possible by Lemma \ref{th:plumbing}.
\end{proof}

\begin{proof}[Proof of Theorem \ref{th:abelian}]
Again, we may assume without loss of generality that $m$ is even. Take two objects $L_0,L_1$ of $\mathit{Fuk}(Q)$, both of which have the same underlying Lagrangian submanifold $L$ and {\em Spin} structure, but where the first one carries the trivial representation, and the second one the given representation $\rho$. From \eqref{eq:euler} one sees that $\mathrm{dim}\, \mathit{HF}^*(V_k,L_0) \equiv \mathrm{dim}\, \mathit{HF}^*(V_k,L_1)$ mod $2$ for all $k$. By the same argument as in the proof of Theorem \ref{th:arnold}, this implies that $\mathit{HF}^*(L_0,L_1) \neq 0$, contradicting \eqref{eq:pss}.
\end{proof}

\begin{proof}[Proof of Theorem \ref{th:nonabelian}]
Without loss of generality, assume that $m \geq 2$. Let $L_0$ be the object obtained by taking our Lagrangian submanifold and equipping it with the representation $\rho$. By arguing as in the proof of Theorem \ref{th:primitive}, one sees that there is a Lagrangian sphere $L_1$ such that $\mathit{HF}^*(L_0,L_1)$ is one-dimensional. But that contradicts \eqref{eq:euler}.
\end{proof}

Especially in view of possible extensions to other symplectic manifolds, it may be interesting to see how far one can get without appealing to Proposition \ref{th:ishii-ueda-uehara}. This will be the subject of the next section (however, since the discussion there is not essential for the results of this paper, we will allow a more sketchy treatment).

\section{A variant approach\label{sec:alternative}}

\subsection{Hochschild homology}
For an $A_\infty$-algebra $A$ one defines the Hochschild homology $H_*(A,A)$ as the cohomology of the chain complex
\begin{equation}
\begin{aligned}
& C_*(A,A) = \bigoplus_{l \geq 0} A[1]^{\otimes l} \otimes A, \\
& \partial_{C_*(A,A)} (\bar{a}_l \otimes \cdots \otimes \bar{a}_1 \otimes a) = \\
& = \sum_{i,j} (-1)^{|a|+|\bar{a}_1|+ \cdots + |\bar{a}_i|-i} \bar{a}_l \otimes \cdots \otimes \mu_A^{j}(\bar{a}_{i+j}, \dots, \bar{a}_{i+1}) \otimes \cdots \otimes \bar{a}_1 \otimes a \\
& - \sum_{i,j} (-1)^{(|\bar{a}_{l-j+1}| + \cdots + |\bar{a}_l| - j)(|a| + |\bar{a}_1| + \cdots + |\bar{a}_{l-j}| + l-j+1)} \bar{a}_{l-j} \otimes \cdots \otimes \bar{a}_{i+1} \\[-1em] & \qquad \qquad \qquad \qquad \qquad \qquad \qquad \qquad \otimes \mu_A^{1+i+j}(\bar{a}_i,
\dots,a,\bar{a}_l,\dots,\bar{a}_{l-j+1}).
\end{aligned}
\end{equation}
(In spite of being written as a subscript, this is a cohomological grading in our conventions, so $\partial_{C_*(A,A)}$ has degree $+1$). This easily generalizes to $A_\infty$-categories (using composable closed chains of morphisms), which is the context we will work in from now on. For any twisted complex $C$ we have a canonical map
\begin{equation} \label{eq:end-to-hh}
\mathit{hom}_{A^{\mathit{tw}}}(C,C) \longrightarrow C_*(A,A).
\end{equation}
Explicitly, let
\begin{equation}
C = \bigoplus_{f \in F} X_f, \quad \delta_C = \big(\delta_{C,f_1,f_0} \in \mathit{hom}_A^1(X_{f_0},X_{f_1})\big),
\end{equation}
where $F$ is some finite set, and the $X_f$ are objects of $A$ (this is not quite the most general form of a twisted complex, since we have not shifted the $X_f$; however, including shifts just introduces some additional signs). Then \eqref{eq:end-to-hh} takes an endomorphism $a = (a_{f_1,f_0})$ of $C$ to the Hochschild chain
\begin{equation}
\sum_l \sum_{f_0,\dots,f_l \in F} \delta_{C,f_l,f_{l-1}} \otimes \cdots \otimes \delta_{C,f_1,f_0} \otimes a_{f_0,f_l}
\end{equation}
(the sum terminates at some value of $l$ because of the upper triangularity condition on $\delta_C$, which is part of the definition of a twisted complex). One application is to consider a cohomology level idempotent endomorphism $[a] \in H^0(\mathit{hom}_{A^{\mathit{tw}}}(C,C))$. After applying the embedding $A^{\mathit{tw}} \rightarrow A^{\mathit{perf}}$, this endomorphism defines a direct summand of $C$, which is a perfect module $M$ unique up to quasi-isomorphism. We denote the image of $[a]$ under \eqref{eq:end-to-hh} by
\begin{equation} \label{eq:algebraic-fundamental-class}
[M]_{\mathit{alg}} \in H_0(A,A).
\end{equation}

\begin{lemma} \label{th:fundamental-class-in-hh}
$[M]_{\mathit{alg}}$ depends only on the isomorphism class of $M$ in $H^0(A^{\mathit{perf}})$.
\end{lemma}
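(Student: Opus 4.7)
The plan is to show well-definedness in three steps: (i) verify that the chain-level construction $\tau_C: \mathit{hom}_{A^{\mathit{tw}}}(C,C) \to C_*(A,A)$ defined by \eqref{eq:end-to-hh} is a chain map, so that $[\tau_C(a)] \in H_0(A,A)$ depends only on the cohomology class $[a]$; (ii) establish a cyclic trace identity comparing $\tau_C(gf)$ and $\tau_{C'}(fg)$ for morphisms $f: C \to C'$, $g: C' \to C$ in $A^{\mathit{tw}}$; (iii) use (i) and (ii) to transport between any two twisted-complex presentations of the same perfect module.

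For step (i), a direct (if slightly tedious) computation using the $A_\infty$-relations for twisted complexes and the Leibniz-plus-cyclic form of the Hochschild differential gives $\partial_{C_*(A,A)} \circ \tau_C = \tau_C \circ \mu^1_{A^{\mathit{tw}}}$, so cohomologous endomorphisms produce cohomologous Hochschild cycles. For step (iii), suppose $(C_0,[a_0])$ and $(C_1,[a_1])$ both yield $M$ as a direct summand in $H^0(A^{\mathit{perf}})$. Choose cohomology-level inclusion and projection morphisms $[\iota_k]: M \to C_k$ and $[\pi_k]: C_k \to M$ satisfying $[\pi_k \iota_k] = \mathrm{id}_M$ and $[\iota_k \pi_k] = [a_k]$ (this uses the definition of split-closure on the $A_\infty$ level). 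Form $[f] = [\iota_1 \pi_0]$ and $[g] = [\iota_0 \pi_1]$; these lie in the image of $H^0(\mathit{hom}_{A^{\mathit{tw}}})$ since $A^{\mathit{tw}} \to A^{\mathit{perf}}$ is cohomologically fully faithful on twisted complexes. By construction $[gf] = [a_0]$ and $[fg] = [a_1]$, so by (i) it suffices to show $[\tau_{C_0}(gf)] = [\tau_{C_1}(fg)]$ in $H_0(A,A)$.

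The cyclic trace identity in (ii) is the main obstacle and is the Hochschild analogue of $\mathrm{tr}(gf) = \mathrm{tr}(fg)$. Expanding $\tau_{C_0}(gf)$ and $\tau_{C_1}(fg)$ via the composition law in $A^{\mathit{tw}}$ produces chains of the form $\delta_{C_0} \otimes \cdots \otimes \delta_{C_0} \otimes g \otimes \delta_{C_1} \otimes \cdots \otimes \delta_{C_1} \otimes f$, summed over the matrix-index data; one then constructs an explicit primitive in $C_*(A,A)$ for the difference by splitting each sum at the boundary between the $f$-block and the $g$-block and tracking signs through the second (cyclic) term of the Hochschild differential. A conceptually cleaner route factors $\tau_C$ through the Morita-invariance quasi-isomorphism $C_*(A^{\mathit{tw}},A^{\mathit{tw}}) \to C_*(A,A)$: at the $A^{\mathit{tw}}$ level, $\tau_C(a)$ is the length-zero Hochschild chain represented by $a$, and cyclic invariance then follows from the tautological identity $[g \otimes f] = [f \otimes g]$ up to the Hochschild boundary in $C_*(A^{\mathit{tw}}, A^{\mathit{tw}})$. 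Either way, this yields $[\tau_{C_0}(a_0)] = [\tau_{C_1}(a_1)]$, proving the lemma.
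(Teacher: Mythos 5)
Your proposal is correct and follows essentially the same route as the paper: your $f=\iota_1\pi_0$, $g=\iota_0\pi_1$ are the paper's $b_0,b_1$, and your cyclic trace identity, obtained by factoring \eqref{eq:end-to-hh} through the inclusion of length-zero chains into $C_*(A^{\mathit{tw}},A^{\mathit{tw}})$ followed by the Morita/derived-invariance map to $C_*(A,A)$, is exactly the paper's argument, with the boundary of the length-one chain $g\otimes f$ (plus the homotopy correction terms $c_k$) playing the role of \eqref{eq:bc} applied to $A^{\mathit{tw}}$. The only cosmetic difference is that the paper first does the explicit $l=0$ computation as a warm-up and then invokes the same conceptual factorization you describe.
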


\begin{proof}[Sketch of proof]
Consider first a simpler special case, namely that of an idempotent endomorphism $[a]$ of an object $X$ of $A$ itself. In that case, \eqref{eq:end-to-hh} reduces to the map $\mathit{hom}_A(X,X) \rightarrow C_*(A,A)$ given by inclusion of the $l = 0$ term into the Hochschild complex. Take two objects $X_k$ ($k = 0,1$) and idempotent endomorphisms $[a_k]$ which define quasi-isomorphic direct summands. One then has cocycles $b_0 \in \mathit{hom}_A^0(X_0,X_1)$, $b_1 \in \mathit{hom}_A^0(X_1,X_0)$, and auxiliary cochains $c_k \in \mathit{hom}_A^{-1}(X_k,X_k)$, such that
\begin{equation}
\left\{
\begin{aligned}
& \mu^2_A(b_1,b_0) + \mu^1_A(c_0) = a_0, \\
& \mu^2_A(b_0,b_1) + \mu^1_A(c_1) = a_1.
\end{aligned}
\right.
\end{equation}
In the Hochschild complex, this yields
\begin{equation} \label{eq:bc}
\partial_{C_*(A,A)}(b_1 \otimes b_0 + c_0 - c_1) = \mu^1_A(c_0) - \mu^1_A(c_1) - \mu^2_A(b_0,b_1) + \mu^2_A(b_1,b_0) = a_0 - a_1,
\end{equation}
which shows that the Hochschild homology classes of the $a_k$ coincide. One can prove the general case of Lemma \ref{th:fundamental-class-in-hh} by a similarly explicit computation, but it is maybe better understood as follows. As part of the general theory of derived invariance of Hochschild homology, we have a chain map $C_*(A^{\mathit{tw}},A^{\mathit{tw}}) \rightarrow C_*(A,A)$ which is a quasi-inverse to the inclusion $C_*(A,A) \rightarrow C_*(A^{\mathit{tw}},A^{\mathit{tw}})$. From that viewpoint, \eqref{eq:end-to-hh} is the composition of that map with the inclusion $\mathit{hom}_{A^{\mathit{tw}}}(C,C) \rightarrow C_*(A^{\mathit{tw}}, A^{\mathit{tw}})$. Then, applying \eqref{eq:bc} to $A^{\mathit{tw}}$ proves Lemma \ref{th:fundamental-class-in-hh}. 
\end{proof}

\begin{remark}
One can approach \eqref{eq:algebraic-fundamental-class} from a more abstract viewpoint. $M$ gives rise to a class in $K_0(A^{\mathit{perf}})$, and there is a Chern character type map (more appropriately called the Dennis trace) from that to $H_*(A^{\mathit{perf}},A^{\mathit{perf}})$. Finally, a stronger version of derived invariance shows that $H_*(A,A) \iso H_*(A^{\mathit{perf}}, A^{\mathit{perf}})$. We refer to \cite{loday} for an introduction to Chern characters in noncommutative geometry, to \cite{keller98b} for derived invariance, and to \cite{shklyarov} for a more extensive discussion of the Hochschild classes associated to perfect modules.
\end{remark}

\subsection{The Cardy condition}
Let $Q$ be a $2n$-dimensional exact symplectic manifold which is convex at infinity. This is a common setup (see  \cite[Section 3a]{seidel-smith10}, to pick one of many occurrences). It means first of all that $Q$ carries a one-form $\theta_Q$ such that $\omega_Q = d\theta_Q$ is symplectic. Moreover, we have a compatible almost complex structure $J_Q$ with the property that $Q$ has an exhaustion by relatively compact subsets whose closures are $J_Q$-holomorphically convex. We will additionally assume that $Q$ is symplectically Calabi-Yau, hence admits a $J_Q$-complex volume form $\eta_Q$. Objects of the Fukaya category $\mathit{Fuk}(Q)$ are closed Lagrangian submanifolds $L \subset Q$ which are exact with respect to $\theta_Q$, and equipped with additional structures as before (a grading with respect to $\eta_Q$, a {\em Spin} structure, and a flat complex vector bundle).

The Fukaya category comes with a canonical {\em open-closed string map} from its Hochschild homology to the homology of the symplectic manifold,
\begin{equation} \label{eq:open-closed-homology}
\Phi: H_*(\mathit{Fuk}(Q),\mathit{Fuk}(Q)) \longrightarrow H^{*+n}_{\mathit{cpt}}(Q;\C) \iso H_{n-*}(Q;\C).
\end{equation}
The existence of this map, for the case of a closed symplectic manifold, was already implicit in \cite{kontsevich94}. Related ideas appear in various places in the Floer cohomology literature. We refer to \cite[Section 5]{abouzaid10} for a more extensive discussion (which is more sophisticated than what we need here, since it includes non-closed Lagrangian submanifolds and their wrapped Floer cohomology), and only give a schematic description. Fix a Morse function (and Riemannian metric) which can be used to define a Morse complex $C^*(Q)$ underlying ordinary cohomology. Suppose that we are given objects $(L_0,\dots,L_d)$ (assumed to come with trivial flat bundles for simplicity), generators $x_i \in \mathit{CF}^*(L_{i-1},L_i)$ (where we set $L_{-1} = L_d$) which correspond to (perturbed) intersection points, and a generator $y \in C^*(Q)$ which corresponds to a critical point. One then gets a number $n^{d+1}(y,x_0,\dots,x_d) \in \Z$ by counting (perturbed) pseudo-holomorphic maps which have asymptotics $x_i$ at the boundary marked points, and whose evaluation at the interior marked point lies on the unstable manifold of $y$ (Figure \ref{fig:disc}). These numbers form the coefficients of a map (not by itself a chain map)
\begin{equation}
C^*(Q) \otimes \mathit{CF}^*(L_d,L_0) \otimes \mathit{CF}^*(L_{d-1},L_d) \otimes \cdots \otimes \mathit{CF}^*(L_0,L_1) \longrightarrow \C[-n-d].
\end{equation}
The collection of such maps for all $d$ and $(L_0,\dots,L_d)$ forms the chain map underlying \eqref{eq:open-closed-homology} (up to an obvious dualization). 
\begin{figure}
\begin{centering}
\begin{picture}(0,0)%
\includegraphics{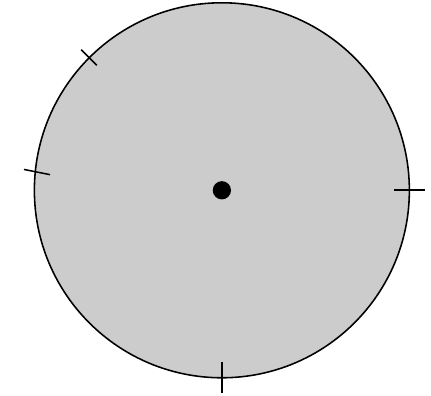}%
\end{picture}%
\setlength{\unitlength}{3947sp}%
\begingroup\makeatletter\ifx\SetFigFont\undefined%
\gdef\SetFigFont#1#2#3#4#5{%
  \reset@font\fontsize{#1}{#2pt}%
  \fontfamily{#3}\fontseries{#4}\fontshape{#5}%
  \selectfont}%
\fi\endgroup%
\begin{picture}(2052,1894)(1186,-448)
\put(2851,1289){\makebox(0,0)[lb]{\smash{{\SetFigFont{11}{13.2}{\rmdefault}{\mddefault}{\updefault}{\color[rgb]{0,0,0}$L_0$}%
}}}}
\put(2926,-286){\makebox(0,0)[lb]{\smash{{\SetFigFont{11}{13.2}{\rmdefault}{\mddefault}{\updefault}{\color[rgb]{0,0,0}$L_3$}%
}}}}
\put(1351,-136){\makebox(0,0)[lb]{\smash{{\SetFigFont{11}{13.2}{\rmdefault}{\mddefault}{\updefault}{\color[rgb]{0,0,0}$L_2$}%
}}}}
\put(1261,1064){\makebox(0,0)[lb]{\smash{{\SetFigFont{11}{13.2}{\rmdefault}{\mddefault}{\updefault}{\color[rgb]{0,0,0}$L_1$}%
}}}}
\end{picture}%
\caption{\label{fig:disc}}
\end{centering}
\end{figure}

By combining \eqref{eq:open-closed-homology} with \eqref{eq:end-to-hh} for a twisted complex $C$, one obtains a map
\begin{equation} \label{eq:pushforward}
\Phi_C: H^*(\mathit{hom}_{\mathit{Fuk}(Q)^{\mathit{tw}}}(C,C)) \longrightarrow H_{n-*}(Q;\C).
\end{equation}
The simplest example is when $C$ is a single Lagrangian submanifold $L$ equipped with a flat bundle $\rho$, in which case \eqref{eq:pushforward} recovers the classical (purely topological) map 
\begin{equation}
\mathit{HF}^*(L,L) \iso H^*(L;\mathit{End}(\rho)) \xrightarrow{\mathrm{trace}} H^*(L;\C) \iso H_{n-*}(L;\C) \longrightarrow H_{n-*}(Q;\C).
\end{equation}
In particular, the image of the identity $[e_L] \in \mathit{HF}^0(L,L)$ is $\mathrm{rank}(\rho)$ times the usual homology class $[L]$.
Similarly, given a twisted complex $C$, the image of $[e_C]$ can be computed from the homology classes of the Lagrangian submanifolds that enter into $C$. More generally, for any object $M$ of $\mathit{Fuk}(Q)^{\mathit{perf}}$, one can take the image of \eqref{eq:algebraic-fundamental-class} under \eqref{eq:open-closed-homology}, and thereby obtain a quasi-isomorphism invariant, which we write as
\begin{equation} \label{eq:geometric-fundamental-class}
[M] \in H_n(Q;\C).
\end{equation}
There is no a priori reason why \eqref{eq:geometric-fundamental-class} should be an integer class. However, one can obtain some restrictions on it from the following:

\begin{proposition} \label{th:cardy}
Suppose that we have two twisted complexes $C_k$ ($k = 0,1$), each of which comes with an endomorphism $[a_k] \in H^*(\mathit{hom}_{\mathit{Fuk}(Q)^{\mathit{tw}}}(C_k,C_k))$. Then
\begin{equation} \label{eq:cardy}
\Phi_{C_1}([a_1]) \cdot \Phi_{C_0}([a_0]) = (-1)^{n(n+1)/2}\, \mathrm{Str}\big([a] \longmapsto (-1)^{|a| \cdot |a_1|} [a_1] \cdot [a] \cdot [a_0]\big) \in \C.
\end{equation}
On the left side, we have the standard intersection pairing on homology. The right hand side is the supertrace of the endomorphism of $H^*(\mathit{hom}_{\mathit{Fuk}(Q)^{\mathit{tw}}}(C_0,C_1))$ given by composition with the two given $[a_k]$, with the additional sign as indicated (note that either side can be nonzero only if $|a_0|+|a_1| = 0$). \qed
\end{proposition}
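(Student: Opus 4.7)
The plan is to deduce \eqref{eq:cardy} from a degeneration analysis of a one-parameter family of pseudo-holomorphic annuli in $Q$, in the spirit of the Cardy relation of open-closed TCFT. As a preliminary step I would reduce to the case in which each $C_k$ is a single object $L_k$ of $\mathit{Fuk}(Q)$ (a Lagrangian with grading, \emph{Spin} structure, and flat vector bundle) and the $a_k$ are represented by Floer cocycles. Indeed, both sides of \eqref{eq:cardy} depend on $(C_k,[a_k])$ only through the induced Hochschild class, namely the image of \eqref{eq:end-to-hh} under the derived-invariance quasi-isomorphism $C_*(A^{\mathit{tw}},A^{\mathit{tw}}) \to C_*(A,A)$ (the left side via the definition of $\Phi_{C_k}$, the right side by unfolding the supertrace as a sum over the components of the twisted complex). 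For degree reasons both sides vanish unless $|a_0| + |a_1| = 0$, which we assume.

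Next, for each conformal modulus $\tau \in (0,\infty)$ I would consider the moduli space $\mathcal{M}^\tau$ of pseudo-holomorphic maps $u: A_\tau \to Q$ from the standard annulus of modulus $\tau$, with one boundary circle mapped into $L_0$ and the other into $L_1$, and with a single boundary marked point on each circle at which Floer cocycles representing $a_0$ and $a_1$ are imposed as asymptotic conditions. The parametrized total space $\mathcal{M} = \bigcup_\tau \mathcal{M}^\tau$, after a generic choice of perturbations, is an oriented one-manifold; convexity of $Q$ together with exactness of the $L_k$ prevents sphere bubbling, disk bubbling, and escape to infinity, so $\overline{\mathcal{M}}$ is compact with boundary concentrated at the two ends $\tau \to 0$ and $\tau \to \infty$.

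These two degenerations produce the two sides of \eqref{eq:cardy}. At $\tau \to \infty$ the annulus neck-stretches and breaks at an interior node into two pseudo-holomorphic disks, one bounding $L_0$ with $a_0$ inserted and one bounding $L_1$ with $a_1$, each carrying an interior marked point at the node. Pairing the interior evaluations in $Q$ by Poincar\'e duality recovers $\Phi_{L_1}([a_1]) \cdot \Phi_{L_0}([a_0])$. At $\tau \to 0$ the annulus degenerates to a thin band which, after rescaling, becomes a closed chain of Floer strips on the pair $(L_0,L_1)$; the resulting count is the trace over generators $x \in \mathit{CF}^*(L_0,L_1)$ of the chain-level operator $x \mapsto (-1)^{|x|\,|a_1|}\,\mmu2(a_1, \mu^2(x,a_0))$, which is the supertrace on the right of \eqref{eq:cardy}. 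The global factor $(-1)^{n(n+1)/2}$ arises from the orientation comparison between the intersection pairing on $H_*(Q;\C)$ and the cup-product pairing on $H^{*+n}_{\mathit{cpt}}(Q;\C)$ induced by Poincar\'e duality in real dimension $2n$.

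The main obstacle is the analytic setup of the annulus moduli, specifically transversality for the parametrized family, Gromov compactness uniform in $\tau$, and the verification that the two claimed degenerations exhaust $\partial\overline{\mathcal{M}}$. The remaining work is an accounting of Koszul signs accumulated as the inputs $a_0,a_1$ are transported around the two boundary circles, together with the global orientation sign from the Poincar\'e-duality comparison. None of this is novel: the same argument has been carried out in the open-closed Floer literature (for instance in Abouzaid's work on wrapped Fukaya categories), and one simply has to observe that it goes through for Lagrangians equipped with flat vector bundles by inserting the induced representations in the usual way.
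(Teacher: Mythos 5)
The geometric core of your argument --- the one-parameter family of annuli whose two ends produce, respectively, the intersection number of the cycles swept by the interior evaluations and the supertrace of the composition operator on Floer cohomology --- is the same gluing-and-deformation argument the paper sketches (Figure \ref{fig:cardy0}), and at that level your treatment of transversality, compactness and signs is at the same heuristic level as the paper's, which itself defers the details to \cite{abouzaid10}. So for the special case $C_k = L_k$ you are reproducing the intended proof.

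The genuine gap is in your preliminary reduction to that special case. You assert that both sides of \eqref{eq:cardy} depend on $(C_k,[a_k])$ only through the Hochschild class, and then conclude that you may take each $C_k$ to be a single Lagrangian. Two things go wrong. First, for the right-hand side this invariance is not a triviality obtained by ``unfolding the supertrace'': the statement that the supertrace of $[a] \mapsto \pm [a_1][a][a_0]$ is a function of the Hochschild classes of $(C_0,[a_0])$ and $(C_1,[a_1])$ is essentially the abstract (categorical) Cardy/Mukai-pairing theorem itself (cf.\ \cite{caldararu-willerton10,shklyarov}), so you would be assuming a statement of the same depth as the one to be proved; the chain-level unfolding produces, for each pair of components, operators with arbitrarily many insertions of the differentials $\delta_{C_k}$, i.e.\ Hochschild chains of arbitrary length, not a sum of single-object contributions. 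Second, even granting the invariance, the reduction to $C_k = L_k$ would require that the relevant Hochschild classes lie in the span of classes of the form $[L,a]$ with $L$ a single Lagrangian, which is neither known nor needed. The paper avoids both issues by not reducing at all: the general case is handled by running the same annulus argument with additional boundary marked points at which the twisted-complex differentials $\delta_{C_k}$ are inserted (exactly matching the length-$l$ terms of \eqref{eq:end-to-hh}), the mixed case being worked out in \cite{abouzaid10}. Your proposal either needs that direct geometric extension, or an independent proof of the categorical trace identity you invoked; as written, the reduction step does not hold up.
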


This is a form of the Cardy condition, which a general feature of two-dimensional topological field theories involving both open strings (Floer cohomology for Lagrangian submanifolds, in our case) and closed strings (the homology of $Q$). For an occurrence in another context see for instance \cite[Theorem 15]{caldararu-willerton10}. Suppose first that $C_k = L_k$ are just Lagrangian submanifolds. In that situation, the gluing and deformation argument underlying \eqref{eq:cardy} becomes very simple; we summarize it in Figure \ref{fig:cardy0}. The general case is fundamentally parallel, except that the surfaces carry additional marked boundary points where one inserts the boundary operators of the twisted complexes. We do not carry out that general argument here; the intermediate level of generality, where one of the two objects involved is a Lagrangian submanifold and the other is a twisted complex, is discussed in detail in \cite{abouzaid10} (even though the intended application there is different). 
\begin{figure}
\begin{centering}
\begin{picture}(0,0)%
\includegraphics{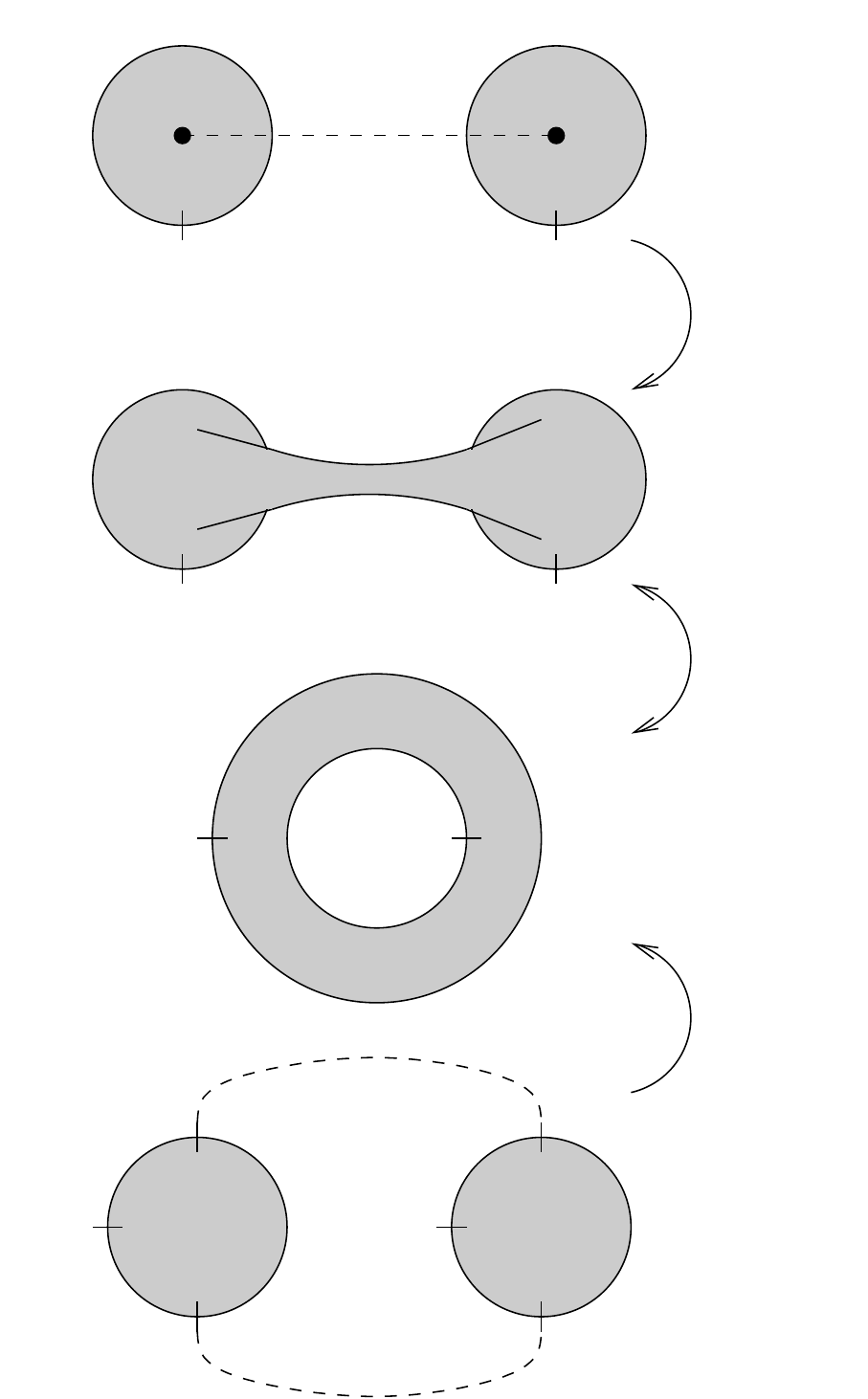}%
\end{picture}%
\setlength{\unitlength}{3947sp}%
\begingroup\makeatletter\ifx\SetFigFont\undefined%
\gdef\SetFigFont#1#2#3#4#5{%
  \reset@font\fontsize{#1}{#2pt}%
  \fontfamily{#3}\fontseries{#4}\fontshape{#5}%
  \selectfont}%
\fi\endgroup%
\begin{picture}(4339,7009)(286,-5723)
\put(2166,-4911){\makebox(0,0)[lb]{\smash{{\SetFigFont{11}{13.2}{\rmdefault}{\mddefault}{\updefault}{\color[rgb]{0,0,0}$[a_1]$}%
}}}}
\put(2426,-4561){\makebox(0,0)[lb]{\smash{{\SetFigFont{11}{13.2}{\rmdefault}{\mddefault}{\updefault}{\color[rgb]{0,0,0}$L_1$}%
}}}}
\put(3526,-4861){\makebox(0,0)[lb]{\smash{{\SetFigFont{11}{13.2}{\rmdefault}{\mddefault}{\updefault}{\color[rgb]{0,0,0}$L_0$}%
}}}}
\put(2426,-5236){\makebox(0,0)[lb]{\smash{{\SetFigFont{11}{13.2}{\rmdefault}{\mddefault}{\updefault}{\color[rgb]{0,0,0}$L_1$}%
}}}}
\put(3826,-361){\makebox(0,0)[lb]{\smash{{\SetFigFont{11}{13.2}{\rmdefault}{\mddefault}{\updefault}{\color[rgb]{0,0,0}gluing}%
}}}}
\put(3826,-2086){\makebox(0,0)[lb]{\smash{{\SetFigFont{11}{13.2}{\rmdefault}{\mddefault}{\updefault}{\color[rgb]{0,0,0}deformation}%
}}}}
\put(3826,-3886){\makebox(0,0)[lb]{\smash{{\SetFigFont{11}{13.2}{\rmdefault}{\mddefault}{\updefault}{\color[rgb]{0,0,0}gluing}%
}}}}
\put(1126,1139){\makebox(0,0)[lb]{\smash{{\SetFigFont{11}{13.2}{\rmdefault}{\mddefault}{\updefault}{\color[rgb]{0,0,0}$L_0$}%
}}}}
\put(3001,1139){\makebox(0,0)[lb]{\smash{{\SetFigFont{11}{13.2}{\rmdefault}{\mddefault}{\updefault}{\color[rgb]{0,0,0}$L_1$}%
}}}}
\put(1126,-586){\makebox(0,0)[lb]{\smash{{\SetFigFont{11}{13.2}{\rmdefault}{\mddefault}{\updefault}{\color[rgb]{0,0,0}$L_0$}%
}}}}
\put(3001,-586){\makebox(0,0)[lb]{\smash{{\SetFigFont{11}{13.2}{\rmdefault}{\mddefault}{\updefault}{\color[rgb]{0,0,0}$L_1$}%
}}}}
\put(1126,-61){\makebox(0,0)[lb]{\smash{{\SetFigFont{11}{13.2}{\rmdefault}{\mddefault}{\updefault}{\color[rgb]{0,0,0}$[a_0]$}%
}}}}
\put(3001,-61){\makebox(0,0)[lb]{\smash{{\SetFigFont{11}{13.2}{\rmdefault}{\mddefault}{\updefault}{\color[rgb]{0,0,0}$[a_1]$}%
}}}}
\put(1126,-1786){\makebox(0,0)[lb]{\smash{{\SetFigFont{11}{13.2}{\rmdefault}{\mddefault}{\updefault}{\color[rgb]{0,0,0}$[a_0]$}%
}}}}
\put(3001,-1786){\makebox(0,0)[lb]{\smash{{\SetFigFont{11}{13.2}{\rmdefault}{\mddefault}{\updefault}{\color[rgb]{0,0,0}$[a_1]$}%
}}}}
\put(2101,-2011){\makebox(0,0)[lb]{\smash{{\SetFigFont{11}{13.2}{\rmdefault}{\mddefault}{\updefault}{\color[rgb]{0,0,0}$L_0$}%
}}}}
\put(976,-2911){\makebox(0,0)[lb]{\smash{{\SetFigFont{11}{13.2}{\rmdefault}{\mddefault}{\updefault}{\color[rgb]{0,0,0}$[a_0]$}%
}}}}
\put(2251,-2986){\makebox(0,0)[lb]{\smash{{\SetFigFont{11}{13.2}{\rmdefault}{\mddefault}{\updefault}{\color[rgb]{0,0,0}$[a_1]$}%
}}}}
\put(2101,-2681){\makebox(0,0)[lb]{\smash{{\SetFigFont{11}{13.2}{\rmdefault}{\mddefault}{\updefault}{\color[rgb]{0,0,0}$L_1$}%
}}}}
\put(451,-4911){\makebox(0,0)[lb]{\smash{{\SetFigFont{11}{13.2}{\rmdefault}{\mddefault}{\updefault}{\color[rgb]{0,0,0}$[a_0]$}%
}}}}
\put(776,-4461){\makebox(0,0)[lb]{\smash{{\SetFigFont{11}{13.2}{\rmdefault}{\mddefault}{\updefault}{\color[rgb]{0,0,0}$L_0$}%
}}}}
\put(706,-5236){\makebox(0,0)[lb]{\smash{{\SetFigFont{11}{13.2}{\rmdefault}{\mddefault}{\updefault}{\color[rgb]{0,0,0}$L_0$}%
}}}}
\put(1651,-4561){\makebox(0,0)[lb]{\smash{{\SetFigFont{11}{13.2}{\rmdefault}{\mddefault}{\updefault}{\color[rgb]{0,0,0}$L_1$}%
}}}}
\end{picture}%
\caption{\label{fig:cardy0}}
\end{centering}
\end{figure}

\begin{remark}
We will not attempt to justify the signs in \eqref{eq:cardy}, but if the reader wants to check their plausibility, the following considerations might be useful. Suppose as before that the $C_k = L_k$ are Lagrangian submanifolds. If the $[a_k]$ are identity elements, we recover \eqref{eq:euler}. Next, if we switch $a_0$ and $a_1$ (restricting to the nontrivial case $|a_0| + |a_1| = 0$), the left hand side changes by $(-1)^{n+|a_0|}$. On the right hand side, for $(a_0,a_1)$ and $(a_1,a_0)$ we would be considering the sum of the traces of the maps
\begin{equation}
\begin{aligned}
& \mathit{HF}^k(L_0,L_1) \longrightarrow \mathit{HF}^k(L_0,L_1), && [a] \longmapsto (-1)^{k|a_1|+k} [a_1] \cdot [a] \cdot [a_0], \\
& \mathit{HF}^{n-k}(L_1,L_0) \longrightarrow \mathit{HF}^{n-k}(L_1,L_0), && [a'] \longmapsto (-1)^{(n-k)|a_0|+(n-k)} [a_0] \cdot [a'] \cdot [a_1].
\end{aligned}
% \int a' a_1 a a_0 = \int a a_0 a' a_1 (-1)^{(a + a_0)(a' + a_1)} = \int 
\end{equation}
Under the ``Poincar{\'e} duality'' in Lagrangian Floer cohomology, these maps are dual up to a factor of $(-1)^{k|a_1| + k + (n-k)|a_0| + (n-k) + (n-|a_0|)|a_1|} = (-1)^{n+|a_0|}$.
\end{remark}

Returning to our discussion of \eqref{eq:geometric-fundamental-class}, Proposition \ref{th:cardy} implies that
\begin{equation} \label{eq:cardy2}
[M_1] \cdot [M_0] = (-1)^{n(n+1)/2} \chi(H^*(\mathit{hom}_{\mathit{Fuk}(Q)^{\mathit{perf}}}(M_0,M_1))) \in \Z,
\end{equation}
where $\chi$ is the Euler characteristic. In particular:

%By definition, $[M]$ specializes to the ordinary homology class if $M$ is the image of a Lagrangian submanifold under the Yoneda embedding. Moreover, if both $M_k$ come from Lagrangian submanifolds, then \eqref{eq:cardy2} reduces to the well-known fact that the Euler characteristic of Floer cohomology is $(-1)^{n(n+1)/2}$ times the intersection number (this can be used as a check on that particular sign; of course the sign is not of fundamental importance, but instead mostly reflects the convention for the orientation of a symplectic manifold). The following is an immediate consequence:

\begin{corollary} \label{th:homology-class}
Suppose that $H_n(Q;\Z)/\mathit{torsion}$ is generated by homology classes of objects in the Fukaya category. Let $L_0$ be an object of $\mathit{Fuk}(Q)$, carrying a flat vector bundle of rank $r_0$, and $M_1$ an object of $\mathit{Fuk}(Q)^{\mathit{perf}}$. Suppose that $H^*(\mathit{hom}_{\mathit{Fuk}(Q)^{\mathit{perf}}}(L_0,M_1))$ has odd total dimension. Then $r_0[L_0] \in H_n(Q;\Z)/\mathit{torsion}$ is not divisible by $2$ (and hence in particular nonzero).
\end{corollary}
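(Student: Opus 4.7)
The plan is to apply the Cardy relation \eqref{eq:cardy2} with $M_0 = L_0$, viewed as an object of $\mathit{Fuk}(Q)^{\mathit{perf}}$ via the Yoneda embedding. By the computation immediately preceding \eqref{eq:geometric-fundamental-class}, the geometric fundamental class of $L_0$ is $r_0$ times the topological class, i.e.\ $[L_0]_{\mathrm{geom}} = r_0[L_0]$. Substituting into \eqref{eq:cardy2} therefore gives
\begin{equation*}
[M_1] \cdot (r_0[L_0]) \;=\; (-1)^{n(n+1)/2}\,\chi\bigl(H^*(\mathit{hom}_{\mathit{Fuk}(Q)^{\mathit{perf}}}(L_0,M_1))\bigr) \;\in\; \Z.
\end{equation*}

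The next step is a parity observation: since $-1 \equiv 1$ mod $2$, the Euler characteristic on the right has the same parity as the total dimension of $H^*(\mathit{hom}_{\mathit{Fuk}(Q)^{\mathit{perf}}}(L_0,M_1))$, which is odd by assumption. Hence $[M_1] \cdot (r_0[L_0])$ is an odd integer.

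I would then invoke the generation hypothesis to derive a contradiction from $2$-divisibility. Suppose, for contradiction, that $r_0[L_0] = 2\gamma$ in $H_n(Q;\Z)/\mathit{torsion}$. By hypothesis, $\gamma = \sum_i c_i [L_i']$ for some $c_i \in \Z$ and some objects $L_i'$ of $\mathit{Fuk}(Q)$; we may take each $L_i'$ equipped with the trivial rank-one flat bundle, so that the geometric and topological classes coincide. Applying \eqref{eq:cardy2} to each pair $(L_i', M_1)$ shows that every $[M_1]\cdot [L_i']$ is an integer, so
\begin{equation*}
[M_1]\cdot (r_0[L_0]) \;=\; 2\sum_i c_i\,\bigl([M_1]\cdot [L_i']\bigr) \;\in\; 2\Z,
\end{equation*}
contradicting the oddness from the previous step.

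The argument is essentially a one-line application of the Cardy formula, so I do not anticipate a real obstacle. The only mildly subtle point worth flagging is that $[M_1]$ a priori lives in $H_n(Q;\C)$ rather than $H_n(Q;\Z)$; it is precisely the generation hypothesis that forces its pairing with every class in $H_n(Q;\Z)/\mathit{torsion}$ to be a genuine integer, which is what makes the $2$-divisibility step go through.
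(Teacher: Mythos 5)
Your proposal is correct and follows essentially the same route as the paper: apply the Cardy relation \eqref{eq:cardy2} to $(L_0,M_1)$, noting that the geometric class of $L_0$ is $r_0[L_0]$ and that the Euler characteristic is odd because the total dimension is, then express $r_0[L_0]$ in terms of generating classes and use the integrality of their pairings with $[M_1]$ (again from \eqref{eq:cardy2}) to contradict $2$-divisibility. The only difference is cosmetic: you spell out the parity step and the reduction to trivial rank-one bundles, which the paper leaves implicit.
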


\begin{proof} 
Let $V_1,\dots,V_m$ be a collection of objects in $\mathit{Fuk}(Q)$ whose homology classes generate $H_n(Q;\Z)/\mathit{torsion}$. From \eqref{eq:cardy2} we know that $[V_k] \cdot [M_1] \in \Z$ for all $k$. Arguing by contradiction, suppose that $r_0[L_0]$ is divisible by $2$, hence can be written as $r_0[L_0] = d_1[V_1] + \cdots d_r[V_r]$, where the $d_i$ are even integers. Then $r_0[L_0] \cdot [M_1] = d_1 [V_1] \cdot [M_1] + \cdots + d_r [V_r] \cdot [M_1]$ is also even, contradicting \eqref{eq:cardy2}.
\end{proof}

%\begin{corollary} \label{th:homology-class-1}
%Let $L_0$ be an object of $\mathit{Fuk}(Q)$. Suppose that there is an object $M_1$ of $\mathit{Fuk}(Q)^{\mathit{perf}}$ such that %$H^*(\mathit{hom}_{\mathit{Fuk}(Q)^{\mathit{perf}}}(L_0,M_1))$ has odd Euler characteristic. Then $[L_0] \in H_n(Q;\C)$ is %nonzero. \qed
%\end{corollary}
%
%\begin{corollary} \label{th:homology-class-2}
%Suppose that $H_n(Q;\Z)/\mathit{torsion}$ is generated by homology classes of objects in the Fukaya category. Take $L_0$ and $M_1$ %as in Corollary \ref{th:homology-class-1}, but now with the stronger assumption that $H^*(\mathit{hom}_{A^{\mathit{perf}}}%(L_0,M_1))$ has Euler characteristic $\pm 1$. Then the image of $[L_0]$ in $H_n(Q;\Z)/\mathit{torsion}$ is primitive.
%\end{corollary}
%
%\begin{proof}
%By our assumption on $Q$ and \eqref{eq:cardy2}, $[M_1] \cdot x \in \Z$ for all integer homology classes $x$. Again by %\eqref{eq:cardy2}, $[M_1] \cdot [L_0] = \pm 1$, which clearly shows that $[L_0]$ cannot be a nontrivial multiple of another homology %class modulo torsion.
%\end{proof}
%
%\begin{proof}
%First, \eqref{eq:cardy} implies that $[M_1] \cdot [L] \in \Z$ for any object $L$ of the Fukaya category. By assumption, this shows that %$[M_1] \in H_n(Q;\Z)$. A second application of \eqref{eq:cardy} shows that $[M_1] \cdot [L_0] = \pm 1$, which implies the desired result. 
%\end{proof}
%\begin{corollary} \label{th:homology-class-2}
%Suppose that $H_n(Q;\Z)$ is generated by homology classes of objects of the Fukaya category, and also that the intersection pairing on %$H_n(Q;\Z)$ is unimodular. 

\subsection{The $(A_m)$ Milnor fibre revisited}
We now return to our example of $Q = Q_m^n$, for some $m \geq 1$ and $n \geq 2$.

\begin{lemma} \label{th:integral}
For any object $M$ of $\mathit{Fuk}(Q)^{\mathit{perf}}$, $[M] \in H_n(Q;\C)$ is integral.
\end{lemma}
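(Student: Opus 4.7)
The plan is to show that $M \mapsto [M] \in H_n(Q;\C)$ factors as a group homomorphism through the Grothendieck group $K_0(\mathit{Fuk}(Q)^{\mathit{perf}})$, which turns out to be freely generated by the classes of the Lagrangian spheres $V_1, \ldots, V_m$.

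First, I would identify $\mathit{Fuk}(Q_m^n)^{\mathit{perf}}$ with $A_m^{n,\mathit{perf}}$ via the quasi-equivalence \eqref{eq:algebraic-fukaya-1}. Since $A = A_m^n$ is a finite-dimensional graded algebra whose indecomposable projective modules $P_1,\ldots,P_m$ correspond to the vertices of the quiver \eqref{eq:am-quiver}, the standard theory of finite-dimensional algebras of finite global dimension gives
\[
K_0(\mathit{Fuk}(Q)^{\mathit{perf}}) \;\cong\; K_0(A^{\mathit{perf}}) \;\cong\; \bigoplus_{k=1}^m \Z\cdot [V_k].
\]
In particular, any $M \in \mathit{Fuk}(Q)^{\mathit{perf}}$ satisfies $[M] = \sum_k n_k[V_k]$ in $K_0$ for some $n_k \in \Z$.

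Next, I would verify that the assignment $M \mapsto [M]_{\mathit{alg}} \in H_0(\mathit{Fuk}(Q),\mathit{Fuk}(Q))$ is additive on exact triangles (this is the Dennis trace / Chern character property), so that post-composition with the linear map $\Phi$ of \eqref{eq:open-closed-homology} yields a group homomorphism $K_0(\mathit{Fuk}(Q)^{\mathit{perf}}) \to H_n(Q;\C)$. Additivity for a direct sum decomposition $C \simeq M \oplus M'$ is immediate from the decomposition $\mathrm{id}_C = \mathrm{id}_M + \mathrm{id}_{M'}$ of idempotents in $\mathit{hom}_{\mathit{Fuk}(Q)^{\mathit{tw}}}(C,C)$ and the linearity of \eqref{eq:end-to-hh}. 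The general exact triangle case can be handled by a chain-level computation inside $\mathit{Fuk}(Q)^{\mathit{tw}}$ of the same flavor as \eqref{eq:bc} in the proof of Lemma \ref{th:fundamental-class-in-hh}. As noted in the paragraph following \eqref{eq:pushforward}, for a single Lagrangian $V_k$ carrying the trivial flat line bundle, the Hochschild class and open-closed map together recover the ordinary homology class $[V_k] \in H_n(Q;\Z)$.

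Combining the two steps, one obtains the identity $[M] = \sum_k n_k[V_k]$ in $H_n(Q;\C)$ with $n_k \in \Z$, so $[M]$ is integral. The main technical point is the additivity of $M \mapsto [M]_{\mathit{alg}}$ on exact triangles; this is standard in the literature on the Dennis trace, but the excerpt does not spell it out, and one would need to either cite a reference or argue directly by extending the chain-level identity \eqref{eq:bc} to the situation where $C$ is presented as a mapping cone. The K-theory computation, while routine for finite-dimensional algebras of finite global dimension such as $A_m^n$, also deserves brief comment since its transfer to $\mathit{Fuk}(Q)^{\mathit{perf}}$ relies on the dg quasi-equivalence \eqref{eq:algebraic-fukaya-1}.
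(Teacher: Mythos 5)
The proposal has a genuine gap at its first step: the claim that $K_0(\mathit{Fuk}(Q)^{\mathit{perf}}) \cong K_0(A^{\mathit{perf}})$ is \emph{freely generated by the classes} $[V_k]$, justified by ``standard theory of finite-dimensional algebras of finite global dimension''. That justification does not apply here. First, $A = A_m^n$ does not have finite global dimension (it is a zigzag-type Frobenius algebra; already $A_1^n = \C[t]/t^2$ is self-injective and non-semisimple). Second, and more importantly, $A^{\mathit{perf}}$ is not the perfect derived category of an ordinary ungraded algebra: $A$ sits in cohomological degrees $0$ and $n>0$, so the classical fact $K_0(\mathrm{Perf}(A)) \cong K_0(\mathrm{proj}\,A)$ for rings (or connective dg algebras) cannot be invoked. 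Third, $\mathit{Fuk}(Q)^{\mathit{perf}}$ is by definition split-closed, and idempotent completion is exactly the operation under which $K_0$ can grow: by Thomason's theorem on dense triangulated subcategories, the subcategory built from the $V_k$ by shifts and cones has $K_0$ equal to the subgroup generated by the $[V_k]$, but the $K_0$ of the split closure may a priori be strictly larger (e.g.\ a finite-index overlattice). A soft argument using the Euler pairing only shows that classes pair integrally against the $[V_k]$, i.e.\ land in the dual lattice; since \eqref{eq:intersection-form} has determinant $\pm(m+1)$ and \eqref{eq:skew-form} is degenerate for odd $m$, this does not give integrality in the $[V_k]$-basis. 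So your $K_0$ statement is essentially as strong as the lemma you are trying to prove, and it is asserted rather than established. (Your second step --- additivity of the Hochschild class on exact triangles, so that $M \mapsto [M]$ factors through $K_0$ --- is fine and standard, in line with the Dennis trace remark after Lemma \ref{th:fundamental-class-in-hh}; the problem is entirely in the $K_0$ computation.)

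The paper's proof avoids computing $K_0$ altogether. It only extracts the integrality of the intersection numbers $[M]\cdot[V_k]$, which follows from the Cardy relation (Proposition \ref{th:cardy}, in the form \eqref{eq:cardy2}) because these numbers are Euler characteristics of Floer/morphism groups. Precisely because the lattice-index problem above is real, this is not yet enough on $Q_m^n$ itself; the paper therefore uses the symplectic embedding $Q_m^n \hookrightarrow Q_{m+1}^n$ of Lemma \ref{th:plumbing}, compatibility of the open-closed maps, and the extra constraint $[\tilde M]\cdot[\tilde V_{m+1}] \in \Z$ coming from the additional sphere; inspection of the $(m+1)\times(m+1)$ forms then forces all coordinates of $[M]$ to be integers. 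If you want to pursue your route, you would need an actual proof that the split closure does not enlarge $K_0$ beyond the span of the $[V_k]$ (for all $n$, not just $n=2$), which is a nontrivial statement not supplied by the cited ``standard theory''; otherwise the argument should be rerouted through the Cardy condition and the embedding trick as in the paper.
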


\begin{proof}[Sketch of proof]
The simplest approach would be to enlarge $\mathit{Fuk}(Q)$ by allowing some noncompact Lagrangian submanifolds, and use an appropriate generalization of Proposition \ref{th:cardy}. Instead, one can use the following technically less demanding workaround. Let $\tilde{Q} = Q_{m+1}^n$, with its $(A_{m+1})$-configuration $(\tilde{V}_1,\dots,\tilde{V}_{m+1})$. We know that there is a symplectic embedding $Q \rightarrow \tilde{Q}$, which sends $V_k$ to $\tilde{V}_k$ for $k \leq m$. This induces a cohomologically full and faithful embedding $\mathit{Fuk}(Q) \rightarrow \mathit{Fuk}(\tilde{Q})$. The associated open-closed string maps fit into a commutative diagram
\begin{equation}
\xymatrix{
\ar[d] H^*(\mathit{Fuk}(Q),\mathit{Fuk}(Q)) \ar[rr] && H_{n-*}(Q;\C) \ar[d] \\
H^*(\mathit{Fuk}(\tilde{Q}),\mathit{Fuk}(\tilde{Q})) \ar[rr] && H_{n-*}(\tilde{Q};\C).
}
\end{equation}
Let $M$ be our object, and $\tilde{M}$ its image in $\mathit{Fuk}(\tilde{Q})^{\mathit{perf}}$. We then know that $[\tilde{M}]$ is the image of $[M]$ under the map $H_n(Q;\C) \rightarrow H_n(\tilde{Q};\C)$, which is just the inclusion $\Z^m = \Z^m \times \{0\} \hookrightarrow \Z^{m+1}$. Moreover, by Proposition \ref{th:cardy}, $[\tilde{M}] \cdot [\tilde{V}_k] \in \Z$ for all $k$ including $k = m+1$. It is easy to see, by inspection of the intersection forms \eqref{eq:intersection-form} and \eqref{eq:skew-form}, that this implies the integrality of $[M]$.
\end{proof}

%\begin{remark}
%In their study of the symplectic topology of plumbings, Abouzaid-Smith \cite[Remark 1.2]{abouzaid-smith11} show that for $n>2$, the embedding %$\mathit{Fuk}(Q)^{\mathit{tw}} \rightarrow \mathit{Fuk}(Q)^{\mathit{perf}}$ is a quasi-equivalence. That of course implies Lemma \ref{th:integral} %n those dimensions.
%\end{remark}

The following is a weak version of Lemma \ref{th:algebraic-1}, whose proof is independent of the results of \cite{ishii-uehara05,ishii-ueda-uehara06}:

\begin{lemma} \label{th:algebraic-3}
Suppose that $m$ is even. Let $M_0$ be an object of $\mathit{Fuk}(Q)^{\mathit{perf}}$ whose cohomology level endomorphism ring is isomorphic to $H^*(S^n;\C)$. Then $H^*(\mathit{hom}_{\mathit{Fuk}(Q)^{\mathit{perf}}}(V_k,M_0))$ has odd total dimension for some $k$.
\end{lemma}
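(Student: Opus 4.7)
The plan is to reduce to the surface case $n=2$ by the same equivariance-and-scaling device used in Lemma~\ref{th:algebraic-1}, and then to pin down the parity of $[V_k] \cdot [M_0]$ by applying the Cardy identity of Proposition~\ref{th:cardy} to the self-intersection of the reduced object, using that $m$ even makes the intersection form odd-determinant.

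First I would lift $M_0$ to a bigraded object $\BM_0 \in A^{\Bperf}$ using Theorem~\ref{th:main4}, exactly as in the proof of Lemma~\ref{th:algebraic-1}: the hypothesis that $H^*(\mathit{hom}(M_0,M_0)) \iso H^*(S^n;\C)$ has total dimension~$2$ guarantees, via injectivity of \eqref{eq:collapse-2} and Lemma~\ref{th:detect-perfect}, that the lift can be taken perfect and is indecomposable. Remark~\ref{th:scaling} then transfers $\BM_0$ to an indecomposable object $\bar\BM_0 \in \bar A^{\Bperf}$ with $\bar A = A_m^2$, preserving the total dimensions of both cohomology and bigraded morphism spaces. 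Collapsing the bigrading yields $\bar M_0 \in \bar A^{\mathit{perf}} \htp \mathit{Fuk}(Q_m^2)^{\mathit{perf}}$, whose endomorphism ring remains $H^*(S^2;\C)$.

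The arithmetic input comes next. Applying Proposition~\ref{th:cardy} to $C_0 = C_1 = \bar M_0$ with identity endomorphisms, and using Lemma~\ref{th:integral} for integrality, gives
\begin{equation*}
[\bar M_0] \cdot [\bar M_0] = (-1)^{2\cdot 3/2}\, \chi(H^*(S^2;\C)) = -2 \in H_2(Q_m^2;\Z).
\end{equation*}
In the basis $\{[V_k]\}$, the intersection form is $-C$ where $C$ is the $A_m$ Cartan matrix, with $\det C = m+1$ odd because $m$ is even. Writing $[\bar M_0] = \sum a_k [V_k]$, the identity becomes $a^T C a = 2$. If we had $a \equiv 0 \pmod 2$, then $a = 2w$ would force $a^T C a = 4w^T C w \in 4\Z$, contradicting $a^T C a = 2$. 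Hence $[\bar M_0] \not\equiv 0 \pmod 2$, and since $C$ is invertible modulo $2$, the vector $([V_k] \cdot [\bar M_0])_k = -Ca$ is also nonzero modulo $2$, so some $[V_k] \cdot [\bar M_0]$ is odd. By \eqref{eq:euler}, this forces $\chi(\mathit{HF}^*(V_k, \bar M_0))$ to be odd, and hence the total dimension of $H^*(\mathit{hom}_{\bar A^{\mathit{perf}}}(\bar P_k, \bar M_0))$ to be odd.

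Finally I would transfer the conclusion back to the original $n$. By compactness \eqref{eq:collapse-2} and Remark~\ref{th:scaling}, the total dimensions
\begin{equation*}
\dim_\C H^*(\mathit{hom}_{A^{\mathit{perf}}}(P_k, M_0)) \quad \text{and} \quad \dim_\C H^*(\mathit{hom}_{\bar A^{\mathit{perf}}}(\bar P_k, \bar M_0))
\end{equation*}
coincide, so the odd-parity property passes through the quasi-equivalence $A^{\mathit{perf}} \htp \mathit{Fuk}(Q_m^n)^{\mathit{perf}}$ sending $P_k \mapsto V_k$, giving the desired conclusion. The only real obstacle is bookkeeping: ensuring that under the bigraded lift, the scaling transfer, and the collapse back, the images of $P_k$ are consistently identified with $V_k$ and that all invocations of Cardy and of the equivalence with the Fukaya category remain compatible; since the proof of Lemma~\ref{th:algebraic-1} already carries out these identifications, the remaining work is essentially to substitute the Cardy-plus-arithmetic argument for the Ishii--Ueda--Uehara classification.
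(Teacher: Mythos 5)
Your proposal is correct and follows essentially the same route as the paper's own sketch: reduce to $n=2$ by the equivariance argument of Lemma \ref{th:algebraic-1}, use Lemma \ref{th:integral} and Proposition \ref{th:cardy} to get $[\bar M_0]\cdot[\bar M_0]=-2$, exploit the odd determinant of the intersection form for even $m$, and apply the Cardy relation once more, transferring parities back via \eqref{eq:collapse-2} and Remark \ref{th:scaling}. The only step you assert without justification --- that the reduced $n=2$ object is again spherical, i.e.\ that its second endomorphism generator sits in degree $2$ rather than merely contributing total dimension $2$ --- is exactly the point the paper also passes over quickly; it follows from Serre duality in the $n=2$ category, as noted after Proposition \ref{th:ishii-ueda-uehara}.
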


\begin{proof}[Sketch of proof]
As in our original proof of Lemma \ref{th:algebraic-1}, one can use general equivariance arguments to reduce the problem to the case $n = 2$, which we will exclusively consider from now on. From Lemma \ref{th:integral} we know that $[M_0] \in H_2(Q;\Z)$. Since $M_0$ is spherical, Proposition \ref{th:cardy} implies that $[M_0] \cdot [M_0] = -2$, hence $[M_0]$ is primitive and in particular nonzero mod $2$. Since $m$ is even, the intersection form \eqref{eq:intersection-form} has odd determinant, hence $[M_0] \cdot [V_k]$ is odd for some $k$. Applying Proposition \ref{th:cardy} again implies the desired result.
\end{proof}

This directly leads to a weaker version of Theorem \ref{th:primitive}, which states that if $L \subset Q$ is a rational homology sphere and {\em Spin}, then $[L]$ is not divisible by $2$ (and therefore in particular nonzero). There is also an analogous weak version of Theorem \ref{th:nonabelian}, which only excludes the existence of representations $\rho$ satisfying \eqref{eq:twisted-endomorphisms} whose rank $r$ is even. On the other hand, similar arguments yield the full strength of Theorems \ref{th:arnold} and \ref{th:abelian}. For that, one replaces the given proof of Lemma \ref{th:algebraic-2} by an argument based on Lemma \ref{th:integral} and Proposition \ref{th:cardy}. %Details are left to the reader.

\section{More algebraic background}

\subsection{Group cohomology}
We now start the more abstract discussion of equivariance issues. We will only be considering the ground field $\C$, and the multiplicative group $G = \mathbb{G}_m = \C^*$. Let $\C[G] \iso \C[z,z^{-1}]$ be the ring of regular functions on $G$ as an affine algebraic variety, with its usual (pointwise) multiplication. This also carries a coproduct, coming from the group structure on $G$. {\em Rational representations} of $G$ are those which are direct sums of finite-dimensional representations. They can also be identified with comodules over the coalgebra $\C[G]$ (respecting the counit). While arbitrary direct sums of rational representations are rational, the same is not true for products.

Let $V$ be a rational representation of $G$. The bar resolution of $V$ is a chain complex of rational $G$-modules concentrated in degrees $\geq 0$,
\begin{equation} \label{eq:bar}
B^r(G,V) = \C[G]^{\otimes r+1} \otimes V,
\end{equation}
where $G$ acts only on the leftmost $\C[G]$ factor. To write down the differential, we prefer to think of elements of \eqref{eq:bar} as maps $b: G^{r+1} \rightarrow V$. Explicitly, $f_1 \otimes \cdots \otimes f_{r+1} \otimes v$ corresponds to $b(g_{r+1},\dots,g_1) = f_1(g_1) \dots f_{r+1}(g_{r+1}) v$. In these terms,
\begin{equation} \label{eq:delta-bar}
\begin{aligned}
& (\delta_{B^*(G,V)}b)(g_{r+1},\dots,g_1) = \sum_q (-1)^q b(g_{r+1},\dots,g_{q+1}g_q,\dots,g_1) \\ & \qquad \qquad \qquad \qquad \qquad \qquad + (-1)^{r+1} g_{r+1} b(g_r,\dots,g_1),
\end{aligned}
\end{equation}
which is equivariant for the $G$-action $(g \cdot b)(g_{r+1},\dots,g_1) = b(g_{r+1},\dots,g_1g)$. One needs to check that this preserves the subspace of functions of the form \eqref{eq:bar}, and that is done by rewriting \eqref{eq:delta-bar} in coalgebra terms (the same will be true in parallel situations later on).
% dphi(g_1,g_2) = -\phi(g_1g_2) + g_1\phi(g_2)
% if \phi(g) = gv, then = -g_1g_2v + g_1g_2v = 0.
% contraction h phi(g_r,...g_1) = phi(g_r,...,g_1,e).
% then dh phi = phi(,...g_ig_{i+1},....,e) + g_{r+1} phi(g_2,...g_1,e)
% hd phi = phi(g_1,...,g_{r+1}) + phi(1,...,g_ig_{i+1},...) + g_{r+1} \phi(g_r,...,e).
% differential needs to preserve the left
%
% checking group action: g(h \phi)(....) = (h\phi)(..... g) = \phi(...gh).

\begin{lemma} \label{th:bar}
The comultiplication $V \rightarrow \C[G] \otimes V = B^0(G,V)$ induces an isomorphism
$V \iso H^*(B(G,V))$. \qed
\end{lemma}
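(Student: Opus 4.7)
The plan is to show that the augmented complex
$$V \xrightarrow{\rho} B^0(G,V) \xrightarrow{\delta} B^1(G,V) \xrightarrow{\delta} \cdots$$
is acyclic, from which the lemma follows immediately. Checking that $\rho$ is a cocycle is a one-line calculation using \eqref{eq:delta-bar}: $(\delta \rho(v))(g_2,g_1) = -\rho(v)(g_2 g_1) + g_2\cdot\rho(v)(g_1) = -(g_2 g_1)v + g_2(g_1 v) = 0$ by associativity of the $G$-action.

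To establish exactness I would construct a $\C$-linear (not $G$-equivariant) contracting homotopy $s^r : B^r(G,V) \to B^{r-1}(G,V)$ for $r \geq 0$ (with $B^{-1} := V$) by evaluating at $1$ in the rightmost slot:
$$(s^r b)(h_r,\dots,h_1) \;=\; \pm\, b(h_r,\dots,h_1,1),$$
where the signs are to be fixed, and $s^0(b) = \pm b(1) \in V$. In the tensor description \eqref{eq:bar}, this is precisely the counit $\epsilon : \C[G] \to \C$ applied to one factor, so the rational subspace is automatically preserved.

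The core of the argument is then to verify $\delta s + s\delta = \pm\mathrm{id}_{B^r}$ for $r \geq 1$ by expanding both compositions via \eqref{eq:delta-bar}. In $s \delta b$, the $q=1$ term of the differential evaluated with rightmost entry $1$ involves the merge $g_2 g_1 = h_1 \cdot 1 = h_1$ and contributes $\pm b$ itself; the remaining merge terms for $q \geq 2$ and the action term (which pairs off against the extra $q = r+1$ merge, where the inserted $1$ gets absorbed) reshape, after an index shift, into exactly $\delta sb$ with opposite sign, so they cancel in pairs. At the augmentation one similarly checks $s^0 \rho = \pm\mathrm{id}_V$ (immediate from $\rho(v)(1) = v$) and $\rho s^0 + s^1 \delta = \pm\mathrm{id}_{B^0}$: here $(\rho s^0 b)(g) = g\cdot b(1)$ and $(s^1 \delta b)(g) = \mp(-b(g) + g\cdot b(1))$, and the two copies of $g\cdot b(1)$ cancel leaving $\pm b(g)$.

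The main obstacle is sign-bookkeeping. The differential \eqref{eq:delta-bar} is asymmetric — merges carry $(-1)^q$ while the action term carries $(-1)^{r+1}$ — and one finds that the sign of $s^0$ does not match the uniform convention for $s^r$ with $r \geq 1$; equivalently, one should take $-\rho$ rather than $\rho$ as augmentation, which of course doesn't affect the conclusion since it only changes the identification $V \iso H^0(B(G,V))$ by an overall sign. Conceptually the lemma is the standard assertion that the cobar complex of $V$ as a comodule over the coalgebra $\C[G]$ is a cofree resolution, and the proposed homotopy is the usual one built from the counit; the work is entirely in making the asymmetric formula \eqref{eq:delta-bar} match up with this standard picture.
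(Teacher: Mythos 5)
Your argument is correct, and it is in fact more than the paper supplies: Lemma \ref{th:bar} is stated with no proof at all, the text only remarking that it is ``a general fact about linear algebraic groups'' and citing Hochschild's classical paper. What you have written out is exactly the standard argument behind that citation: $B^r(G,V) = \C[G]\otimes\bigl(\C[G]^{\otimes r}\otimes V\bigr)$ is cofree on the factor carrying the $G$-action (the $g_1$ slot in the paper's convention), and the counit on that factor gives a $\C$-linear contracting homotopy of the augmented complex; since the homotopy is evaluation at the unit applied to a single tensor factor, rationality is preserved, as you note, and equivariance of $s$ is not needed because only exactness of the underlying complex of vector spaces is claimed. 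Your sign diagnosis is also right: with the unsigned homotopy $(s b)(h_r,\dots,h_1)=b(h_r,\dots,h_1,1)$ one gets $\delta s+s\delta=-\mathrm{id}$ in degrees $\geq 1$, and at the augmentation the identity holds uniformly only if one replaces $\rho$ by $-\rho$, which does not affect the conclusion. One small correction to the parenthetical describing the cancellation: in $s\delta b$ the inserted unit is absorbed by the $q=1$ merge (giving $-b$), after which the merges for $q=2,\dots,r+1$ match, after the index shift, the merges $q=1,\dots,r$ of $\delta s b$, while the action term of $s\delta b$ pairs with the action term of $\delta s b$; it is not the case that the action term pairs with the top merge, and the inserted $1$ is not absorbed in either of those two terms. (That description would be closer to what happens if one instead inserted the unit in the $g_{r+1}$ slot, a choice which does \emph{not} yield a contracting homotopy, so it is worth keeping the slots straight when writing out the computation.) With that bookkeeping fixed, your proof is complete and is the expected one.
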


%This would be standard for modules over unital algebras. $\C[G]^*$ is only approximately unital, but %inspection of the usual argument shows that the result is still true in that case \cite{?}.

Similarly, the group cochain complex with coefficients in $V$ is
\begin{equation} \label{eq:group-cocycle-equation}
\begin{aligned}
& C^r(G,V) = \C[G]^{\otimes r} \otimes V, \\
& (\delta_{C^*(G,V)}c)(g_r,\dots,g_1) = \sum_q (-1)^q c(g_r,\dots,g_{q+1}g_q,\dots,g_1) \\ & \qquad \qquad \qquad \qquad + (-1)^r g_r c(g_{r-1},\dots,g_1) + c(g_r,\dots,g_2).
\end{aligned}
\end{equation}

\begin{lemma} \label{th:cohomology}
The inclusion $V^G \hookrightarrow V = C^0(G,V)$ induces an isomorphism $V^G \iso H^*(C(G,V))$. \qed
\end{lemma}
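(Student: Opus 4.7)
The plan is to identify $C^*(G,V)$ with the $G$-invariants of the bar resolution $B^*(G,V)$ from Lemma \ref{th:bar}, and then appeal to the linear reductivity of $G = \C^*$ to conclude that taking invariants preserves cohomology of that resolution.

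First I would observe that under the $G$-action of Lemma \ref{th:bar} --- namely $(g\cdot b)(g_{r+1},\dots,g_1) = b(g_{r+1},\dots,g_1 g)$, which is right translation on the leftmost $\C[G]$ factor --- an element $b \in B^r(G,V)$ is invariant precisely when it is independent of its last argument $g_1$, since the invariants of $\C[G]$ under translation are just the constants. Restriction to $g_1 = e$ then yields an isomorphism
\begin{equation}
(B^r(G,V))^G \iso \C[G]^{\otimes r} \otimes V = C^r(G,V).
\end{equation}
A direct inspection of \eqref{eq:delta-bar} shows that the differential on the left matches \eqref{eq:group-cocycle-equation}, perhaps up to a global sign which is immaterial for cohomology: the $q=1$ summand in \eqref{eq:delta-bar} (where $g_2, g_1$ get merged) degenerates to the ``drop $g_1$'' term $c(g_r,\dots,g_2)$ of \eqref{eq:group-cocycle-equation}; the higher summands reindex to the remaining merge terms; and the boundary piece $(-1)^{r+1} g_{r+1} b(g_r,\dots,g_1)$ descends to $\pm g_r c(g_{r-1},\dots,g_1)$.

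Next I would invoke that $G = \C^*$ is linearly reductive: every rational $G$-representation decomposes as a direct sum of one-dimensional weight spaces $\chi_\lambda$ ($\lambda \in \Z$), and consequently the functor $(-)^G$ is exact on the category of rational $G$-modules. Each $B^r(G,V) = \C[G]^{\otimes r+1} \otimes V$ is rational, since $\C[G]$ under translation decomposes as $\bigoplus_\lambda \chi_\lambda$ and $V$ is rational by hypothesis. Applying the exact functor $(-)^G$ to the quasi-isomorphism $V \to B^*(G,V)$ of Lemma \ref{th:bar} therefore gives a quasi-isomorphism
\begin{equation}
V^G \longrightarrow (B^*(G,V))^G = C^*(G,V),
\end{equation}
which, tracing through the comultiplication $V \to B^0(G,V)$, unravels precisely to the inclusion $V^G \hookrightarrow V = C^0(G,V)$ of the statement.

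The only real obstacle is the combinatorial verification in the first step --- matching the differentials on the nose --- which is a mechanical exercise in signs and reindexing. The structural input, namely that $(-)^G$ is exact on rational $\C^*$-modules, is classical and amounts to the Fourier/weight decomposition.
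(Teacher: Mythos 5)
Your argument is correct, and it is essentially the classical proof the paper has in mind: the paper gives no written proof beyond citing Hochschild and remarking that the statement relies on the semisimplicity of $G$, and your route --- identifying $C^*(G,V)$ with the invariants $B^*(G,V)^G$ (the bar differential restricting, on $g_1$-independent functions, to the group-cochain differential up to an overall sign) and then applying the exact functor $(-)^G$ on rational $\C^*$-modules to the resolution of Lemma \ref{th:bar} --- is precisely that standard argument. The only work left implicit, the sign-and-reindexing check matching \eqref{eq:delta-bar} with \eqref{eq:group-cocycle-equation}, is indeed routine and comes out as you say.
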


While Lemma \ref{th:bar} is a general fact about linear algebraic groups, Lemma \ref{th:cohomology} relies on the semisimplicity of $G$. Of course, both results are classical, see \cite{hochschild61}.

\subsection{Families of modules}
We return to the discussion of $A_\infty$-structures from Section \ref{subsec:ainfty}, in order to mention a less familiar aspect, namely the theory of {\em families of $A_\infty$-modules} \cite[Section 1]{seidel11}. Let $A$ be a strictly unital $A_\infty$-algebra. Take a smooth affine variety $X$, with coordinate ring $\C[X]$. A family of $A$-modules over $X$ is given by a graded $\C[X]$-module $N$ such that each $N^k$ is projective, together with structure maps
\begin{equation} \label{eq:family-mu}
\mu_N^{d+1}: N \otimes_\C A^{\otimes d} \longrightarrow N[1-d]
\end{equation}
which are $\C[X]$-linear and satisfy the usual conditions. It may be useful to recall that over rings with finite global dimension, such as $\C[X]$, unbounded complexes of projective modules are well-behaved \cite{dold60}. 
% Instead of the usual cohomological unitality requirement, one imposes the following condition: if $e \in A^0$ is a cochain representative of the identity, then $\mu^2_N(\cdot,e)$ is chain homotopic to the identity as an endomorphism of the chain complex of $\C[X]$-modules associated to $N$. 
Families of modules over $X$ form a dg category linear over $\C[X]$, which we denote by $A^{\mathit{mod}}/X$. 

The simplest example is the trivial (or constant) family associated to an $A$-module $M$, which is $\C[X] \otimes M$ with the $A_\infty$-module structure of $M$ extended $\C[X]$-linearly.  Take an $A_\infty$-module $M$ and an arbitrary family of modules $N$. We then have
\begin{equation} \label{eq:adjoint-family}
\mathit{hom}_{A^{\mathit{mod}}/X}(\C[X] \otimes M,N) \iso \mathit{hom}_{A^{\mathit{mod}}}(M,N).
\end{equation}
On the right hand side, we forget the $\C[X]$-module structure of $N$ (so this forgetting is adjoint to the operation of constructing a constant family). In the special case of two constant families, we have a natural map
\begin{equation} \label{eq:constant-hom}
\C[X] \otimes \mathit{hom}_{A^{\mathit{mod}}}(M_0,M_1) \longrightarrow \mathit{hom}_{A^{\mathit{mod}}/X}(\C[X] \otimes M_0,\C[X] \otimes M_1). 
\end{equation}
If $M_0$ is perfect, one can use \eqref{eq:adjoint-family} and \eqref{eq:compact} to conclude that this map is a quasi-isomorphism. Returning to the general situation, a family of $A_\infty$-modules is called {\em perfect} if, up to quasi-isomorphism, it can be constructed from the trivial family $\C[X] \otimes A$ by a finite sequence of the usual operations (shifts, mapping cones, and finally passing to a direct summand). In particular, if $M$ is a perfect module then $\C[X] \otimes M$ is a perfect family. 
% Perfect families have a compactness property analogous to \eqref{eq:compact}. We won't

\begin{lemma} \label{th:finite-generation}
Suppose that $N_0$ is a perfect family, and that $N_1$ is a family such that $H^*(N_1)$ is a finitely generated $\C[X]$-module in each degree. Then $H^*(\mathit{hom}_{A^{\mathit{mod}}/X}(N_0,N_1))$ is again finitely generated over $\C[X]$ in each degree.
\end{lemma}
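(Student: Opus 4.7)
The plan is to proceed by devissage on the construction of $N_0$: by definition of a perfect family, $N_0$ is (up to quasi-isomorphism) a direct summand of something built from the trivial family $\C[X] \otimes A$ by finitely many shifts and mapping cones, so it suffices to check that (i) the conclusion holds for $N_0 = \C[X] \otimes A$, and (ii) the class of families $N_0$ satisfying the conclusion is closed under shifts, mapping cones, and direct summands. Throughout, the crucial external input is that $X$ being a smooth affine variety makes $\C[X]$ a finitely generated commutative $\C$-algebra, hence Noetherian.

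For the base case, combine the adjunction \eqref{eq:adjoint-family} with the Yoneda quasi-isomorphism $\mathit{hom}_{A^{\mathit{mod}}}(A,N_1) \htp N_1$ to obtain
\begin{equation*}
H^*\bigl(\mathit{hom}_{A^{\mathit{mod}}/X}(\C[X] \otimes A,N_1)\bigr) \iso H^*(N_1)
\end{equation*}
and observe that this is an isomorphism of $\C[X]$-modules, since the $\C[X]$-action on both sides is induced from the action on $N_1$. The right-hand side is finitely generated in each degree by hypothesis. For the inductive step, shifts trivially preserve the property. For a mapping cone fitting into an exact triangle $N_0' \to N_0'' \to N_0$ with the first two terms already satisfying the conclusion, apply $\mathit{hom}_{A^{\mathit{mod}}/X}(-,N_1)$ and take cohomology; each $H^k(\mathit{hom}_{A^{\mathit{mod}}/X}(N_0,N_1))$ then sits in an exact three-term sequence whose outer entries are finitely generated $\C[X]$-modules, and hence is itself finitely generated by the Noetherian property. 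Direct summands of finitely generated modules over Noetherian rings are finitely generated, closing the induction.

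The only genuinely delicate point is the base case: one must be confident that the adjunction \eqref{eq:adjoint-family} and the Yoneda equivalence are $\C[X]$-linear, so that finite generation transports across them as an assertion about $\C[X]$-modules rather than just $\C$-vector spaces. This is automatic from the construction but deserves a sentence of justification in the write-up. Everything else is routine Noetherian bookkeeping with long exact sequences.
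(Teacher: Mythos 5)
Your proposal is correct and is essentially the paper's own argument: the paper disposes of the base case $N_0 = \C[X] \otimes A$ as a tautology (your adjunction-plus-Yoneda step spelled out) and handles the general case by the same devissage through shifts, mapping cones (long exact sequences plus Noetherianity of $\C[X]$), and direct summands. No gaps; you have simply made explicit the $\C[X]$-linearity and Noetherian bookkeeping that the paper leaves implicit.
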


\begin{proof}
This is a tautology if $N_0 = \C[X] \otimes A$, and the general case follows from that by going through suitable long exact sequences.
\end{proof}

We will also need to recall the definition of a {\em connection} on a family $N$ \cite[Section 1h]{seidel11}. We first introduce the more general notion of pre-connection. Such a pre-connection is a sequence of maps
\begin{equation} \label{eq:nabla}
\pabla_N^{d+1}: N \otimes_\C A^{\otimes d} \longrightarrow \Omega_X \otimes_{\C[X]} N[-d],
\end{equation}
where $\Omega_X$ is the module of K{\"a}hler differentials. The first term should be of the form $\pabla^1_N(n) = (-1)^{|n|} D_N n$, where $D_N$ is an ordinary connection on the graded $\C[X]$-module $N$. The higher order terms $\pabla_N^{d+1}$, $d>0$, are $\C[X]$-linear. The failure of this to be compatible with the $A_\infty$-module structure is expressed by the {\em deformation cocycle}
\begin{equation} \label{eq:atiyah}
%\begin{aligned}
\mathit{def}_N = \mu^1_{A^{\mathit{mod}}/X}(\pabla_N) \in \mathit{hom}_{A^{\mathit{mod}}/X}^1(N,\Omega_X \otimes_{\C[X]} N).
%& \mathit{def}_N^{d+1}(n,a_d,\dots,a_1) = \sum_i (-1)^{|n|+|a_d|+ \cdots + |a_{i+1}|+d-i} \nabla_N^{i+1}(\mu_N^{d+1-i}(n,a_d,%\dots,a_{i+1}),a_i,\dots,a_1) \\
%& \qquad \qquad \qquad \qquad + \sum_{i,j}  (-1)^{|n|+|a_d|+ \cdots + |a_{i+1}|+d-i} \nabla_N^{d+2-j}(n,\dots,\mu_A^j(a_{i+j},%\dots,a_{i+1}),a_i,\dots,a_1) \\
%& \qquad \qquad \qquad \qquad + \sum_i  (-1)^{|n|+|a_d|+ \cdots + |a_{i+1}|+d-i} \mu_N^{i+1}(\nabla_N^{d+1-i}(n,%\dots,a_{i+1}),a_i,\dots,a_1).
%\end{aligned}
\end{equation}
Here, we apply the usual formula for $\mu^1_{A^{\mathit{mod}}/X}$, which is the differential on morphisms in the category of (families of) $A_\infty$-modules, to $\pabla_N$, irrespectively of the fact that $\pabla_N^1$ is not $\C[X]$-linear. Explicitly, we follow the sign conventions in \cite[Section 1j]{seidel04}, so 
\begin{equation}
\begin{aligned}
%& \mathit{def}_N^1(n) = (-1)^{|n|} \mu_N^1(\pabla_N^1(n)) + (-1)^{|n|} (\mathit{id}_{\Omega_X} \otimes \mu^1_N)%(\pabla_N^1(n)), \\
%& \mathit{def}_N^2(n,a) = (-1)^{|n|+|a|-1} (\mathit{id}_{\Omega_X} \otimes \mu_N^1)(\pabla_N^2(n,a)) 
%+ (-1)^{|n|} (\mathit{id}_{\Omega_X} \otimes \mu_N^2)(\pabla_N^1(n),a) \\ & \qquad \qquad + (-1)^{|n|+|a|-1} %\pabla_N^1(\mu_N^2(n,a)) + (-1)^{|n|} \pabla_N^2(\mu_N^1(n),a) + (-1)^{|n|+|a|-1} \pabla_N^2(n,\mu_A^1(a)), \\
%& \dots
& \mathit{def}_N^1(n) = (\mathit{id}_{\Omega_X} \otimes \mu^1_N)(D_Nn) - D_N(\mu^1_N(n)), \\
& \mathit{def}_N^2(n,a) = (-1)^{|n|+|a|-1} (\mathit{id}_{\Omega_X} \otimes \mu_N^1)(\pabla_N^2(n,a)) 
+ (\mathit{id}_{\Omega_X} \otimes \mu_N^2)(D_Nn,a) \\ 
& \qquad \qquad - D_N \mu_N^2(n,a) + (-1)^{|n|} \pabla_N^2(\mu_N^1(n),a) + (-1)^{|n|+|a|-1} \pabla_N^2(n,\mu_A^1(a)), \\
& \dots
\end{aligned}
\end{equation}
Because of cancellation between the terms involving $\pabla_N^1$, $\mathit{def}_N^d$ is indeed $\C[X]$-linear in $n$ (recall that $a$ is an element of $A$, hence is constant over $X$). By construction, $\mu^1_{A^{\mathit{mod}}/X}(\mathit{def}_N) = 0$, and the class
\begin{equation}
\mathit{Def}_N = [\mathit{def}_N] \in H^1(\mathit{hom}_{A^{\mathit{mod}}/X}(N,\Omega_X \otimes_{\C[X]} N))
\end{equation}
is independent of the choice of $\pabla_N$. In particular, to compute that class one may choose a pre-connection whose higher order terms vanish, in which case \eqref{eq:atiyah} simplifies to
\begin{equation}
\mathit{def}_N^{d+1}(n,a_d,\dots,a_1) = 
(\mathit{id}_{\Omega_X} \otimes \mu_N^{d+1})(D_N n,a_d,\dots,a_1) - D_N \mu_N^{d+1}(n,a_d,\dots,a_1),
\end{equation}
measuring the compatibility of $D_N$ with the $A_\infty$-module structure. A connection $\nabla_N$ is a pre-connection such that $\mathit{def}_N = 0$. Connections exist if and only if $\mathit{Def}_N = 0$; and in that case, homotopy classes of connections are parametrized by $H^0(\mathit{hom}_{A^{\mathit{mod}}/X}(N,\Omega_X \otimes_{\C[X]} N))$. If $N_0$ and $N_1$ are families equipped with connections, then the graded $\C[X]$-module $H^*(\mathit{hom}_{A^{\mathit{mod}}/X}(N_0,N_1))$ carries a connection in the ordinary sense of the word, denoted by $D_{N_0,N_1}$ (see again \cite[Section 1h]{seidel11} for the definition). In particular, if this module is finitely generated (in some degree), it is necessarily projective (in that degree) \cite{bernstein-dmodules}. Moreover, these connections are compatible with the categorical structure in the following sense: if the families $N_k$ all carry connections, then
\begin{equation} \label{eq:product-of-connections}
\begin{aligned}
& D_{N_0,N_2}([n_2] \, [n_1]) = D_{N_1,N_2}([n_2]) [n_1] + (\mathit{id}_{\Omega_X} \otimes [n_2]) D_{N_0,N_1}([n_1]), \\
& \text{for } [n_k] \in H^*(\mathit{hom}_{A^{\mathit{mod}}/X}(N_{k-1},N_k)).
\end{aligned}
\end{equation}

\section{Weakly equivariant modules}

\subsection{Setup}
From now on, suppose that our $A_\infty$-algebra $A$ carries a rational action of $G = \C^*$. This should be understood in a naive sense: we have a rational action of $G$ on the underlying graded vector space, such that all the $\mu^d_A$ are equivariant. 

Given an $A_\infty$-module $M$ and some $h \in G$, one can define the pullback module $h^*M$, which has the same underlying graded vector space as $M$, but with the twisted $A_\infty$-module structure
\begin{equation}
\mu_{h^*M}^{d+1}(m,a_d,\dots,a_1) = \mu^{d+1}_M(m,h(a_d),\dots,h(a_1)).
\end{equation}
Pullback by $h$ is an automorphism of the category of $A$-modules, whose action on morphisms is
\begin{equation} \label{eq:pullback}
\begin{aligned}
& h^*: \mathit{hom}_{A^{\mathit{mod}}}(M_0,M_1) \longrightarrow \mathit{hom}_{A^{\mathit{mod}}}(h^*M_0,h^*M_1), \\
& (h^*\phi)^{d+1}(m,a_d,\dots,a_1) = \phi^{d+1}(m,h(a_d),\dots,h(a_1)).
\end{aligned}
\end{equation}
There is also an infinitesimal version of pullback. Namely, define the {\em Killing cocycle} 
\begin{equation} \label{eq:ob}
\begin{aligned}
& \mathit{ki}_M \in \mathit{hom}_{A^{\mathit{mod}}}^1(M,\g^* \otimes M), \\
& \mathit{ki}_M^{d+1}(m,a_d,\dots,a_1) = -\sum_k z^* \otimes \mu_M^{d+1}(m,a_d,\dots, z(a_k),\dots,a_1).
\end{aligned}
\end{equation}
Here, $\g = \C$ is the Lie algebra of $G = \C^*$. We picked a nonzero element $z \in \g$ along with its dual $z^* \in \g^*$. The notation $z(a_k)$ stands for the infinitesimal action of $\g$ on $A$. The cohomology class $\mathit{Ki}_M = [\mathit{ki}_M] \in H^1(\mathit{hom}_{A^{\mathit{mod}}}(M,\g^* \otimes M))$ is a quasi-isomorphism invariant. As will be explained below, this is the infinitesimal obstruction to equivariance.

Because of the structure of $G$ as an algebraic group, it is often better to think in terms of families parametrized by $G$. The category $A^{\mathit{mod}}/G$ has a $\C[G]$-linear automorphism $\gamma^*$, which acts by $g^*$ on the fibre over $g \in G$. We start with the trivial family $\C[G] \otimes M$, and define the {\em orbit family} to be $N = \gamma^*(\C[G] \otimes M)$. The graded $\C[G]$-module underlying $N$ is still $\C[G] \otimes M$, but the $A_\infty$-module structure has been modified in such a way that the fibre of $N$ over any point $g$ is $g^*M$. In the special case where $M = A$ is the free module, one can use the $G$-action to trivialize the associated orbit family, meaning that $\gamma^*(\C[G] \otimes A) \iso \C[G] \otimes A$. From this it follows that, for any perfect module $M$, the associated orbit family $N$ is a perfect family.

We will now relate this construction to the previously introduced infinitesimal obstruction. By combining \eqref{eq:adjoint-family} and the pullback $\gamma^*$, we get an isomorphism of chain complexes
\begin{equation} \label{eq:gamma-endo}
\begin{aligned}
\mathit{hom}_{A^{\mathit{mod}}}(M,\C[G] 
\otimes \g^* \otimes M) & = \mathit{hom}_{A^{\mathit{mod}}/G}(\C[G] \otimes M, \Omega_G \otimes M) \\ & \xrightarrow{\gamma^*} \mathit{hom}_{A^{\mathit{mod}}/G}(N, \Omega_G \otimes_{\C[G]} N).
\end{aligned}
\end{equation}
A simple computation shows that the image of $\mathit{ki}_M \in \mathit{hom}_{A^{\mathit{mod}}}(M,\g^* \otimes M)$ under \eqref{eq:gamma-endo} is the deformation class associated to the trivial pre-connection $\pabla_N$ (the one obtained by identifying the underlying $\C[G]$-module with $\C[G] \otimes M$). Since the inclusion of constants, $\mathit{hom}_{A^{\mathit{mod}}}(M,\g^* \otimes M) \hookrightarrow \mathit{hom}_{A^{\mathit{mod}}}(M,\C[G] \otimes \g^* \otimes M)$, is injective on cohomology, $N$ admits a connection if and only if $\mathit{Ki}_M = 0$. More concretely, a choice of coboundary $\mu^1_{A^{\mathit{mod}}}(\alpha) = \mathit{ki}_M$ yields a connection $\nabla_N$ on $N$, given by
\begin{equation} \label{eq:symmetric-connection}
\begin{aligned}
& \nabla_N^1(n)(g) = (-1)^{|n|} dn(g) - \alpha^1(n(g)), \\
& \nabla_N^{d+1} (n,a_d,\dots,a_1)(g) = -\alpha^{d+1}(n(g),g(a_d),\dots,g(a_1)) \quad \text{for $d>0$.}
\end{aligned}
\end{equation}
Here, we think of $n \in N \iso \C[G] \otimes M$ as a function on $G$ with values in $M$; and of $\nabla n \in \Omega_G \otimes_{\C[G]} N \iso \Omega_G \otimes M \iso \C[G] \otimes \g^* \otimes M$ as a function on $G$ with values in $\g^* \otimes M$. 

\begin{remark} \label{th:2-pullbacks}
The orbit family has a symmetry property, which is intuitively obvious but whose formal statement requires a bit of effort. Let's temporarily suppose that $N$ is an arbitrary family of $A$-modules over $G$. Given some fixed $h \in G$, we can form $h^*N$, which is the pullback \eqref{eq:pullback} applied equally to all the fibres. On the other hand, we can use the right multiplication map $r_{h^{-1}}: G \rightarrow G$, $r_{h^{-1}}(g) = gh^{-1}$, and push forward the family $N$ in the geometric sense, forming $r_{h^{-1},*} N$. The reader will have noticed the unhappy proximity in notation between these two quite different operations ($h^*$ changes the $A_\infty$-module structure, whereas $r_{h^{-1},*}$ changes the $\C[G]$-module structure). To clarify the situation, we write things down explicitly: the graded vector space underlying $h^*N$ and $r_{h^{-1},*}N$ is equal to that of $N$, and the $\C[G]$-module structure and $A_\infty$-module structure are given by
\begin{equation} \label{eq:two-pullbacks}
\begin{aligned}
& (f \cdot_{h^*N} n)(g) = f(g) n(g), && \mu_{h^*N}^{d+1}(n,a_d,\dots,a_1) = \mu^{d+1}_N(n,h(a_d),\dots,h(a_1)), \\
& (f \cdot_{r_{h^{-1},*} N} n)(g) = f(gh^{-1})n(g), && \mu_{r_{h^{-1},*}N}^{d+1}(n,a_d,\dots,a_1) = \mu^{d+1}_N(n,a_d,\dots,a_1).
\end{aligned}
\end{equation}
In the case of the orbit family, it follows that $n(g) \mapsto n(gh)$ is an isomorphism (of $\C[G]$-modules, and strictly compatible with the $A_\infty$-module structure) $r_{h^{-1},*}N \rightarrow h^*N$. Note also that any connection on $N$ induces ones on $h^*N$ and $r_{h^{-1},*}N$, which are different in general. However, for \eqref{eq:symmetric-connection} those two connections are related by the isomorphism which we have just introduced.
\end{remark}

\subsection{Naive actions}
The notion of action on a module directly corresponding to the one we've adopted for $A_\infty$-algebras is:

\begin{definition}
A {\em naive $G$-action} on an $A$-module $M$ is a rational action of $G$ on the underlying graded vector space, such that all the $\mu^{d+1}_M$ are equivariant.
\end{definition}

\begin{example} \label{th:homogeneous}
Suppose that $A$ comes from a graded algebra (only $\mu^2_A$ is nonzero). Then, it carries a $G$-action which has weight $i$ precisely on the degree $i$ part. Suppose that $M$ carries a naive $G$-action, and write $M^{i,j}$ for the part of $M^{i+j}$ on which the action has weight $j$. Then, the bigraded space $M$ with its operations $\mu_M^{d+1}$ is precisely an object of the category $A^{\Bmod}$ considered in Section \ref{subsec:ainfty}.
\end{example}

If $M$ carries a naive $G$-action, then $\mathit{Ki}_M$ is trivial, since the cocycle $\mathit{ki}_M$ is the coboundary of the linear endomorphism $m \mapsto (-1)^{|m|} z^* \otimes z(m)$ (in the contrapositive, if $\mathit{Ki}_M$ is nonzero, not only does $M$ not carry a naive $G$-action, but neither can any other quasi-isomorphic module). From a more geometric viewpoint, a naive action gives rise to an isomorphism $\C[G] \otimes M \iso N$, which takes $m(g)$ to $n = g m(g)$. The existence of such an isomorphism implies that the deformation class of $N$ must vanish, which as explained before is equivalent to the vanishing of $\mathit{Ki}_M$. 

In spite of the apparently obvious nature of the definition, there are some points of caution. For instance, if $M_0$ and $M_1$ carry naive $G$-actions, the space $\mathit{hom}_{A^{\mathit{mod}}}(M_0,M_1)$ carries a $G$-action, but that may not be rational in general (because the definition involves a direct product). %This can be partially cured by introducing a suitable perfectness assumption.

\begin{lemma} \label{th:rational-hom}
Suppose that $M_0$ and $M_1$ carry naive $G$-actions, and that $M_0$ is equivariantly perfect. This means that in the category of modules with naive $G$-actions (and equivariant maps), $M_0$ is quasi-isomorphic to a direct summand of an object produced from $A$ by the following operations: changing the group actions by tensoring with a character; shifting the grading; and mapping cones. Then $\mathit{hom}_{A^{\mathit{mod}}}(M_0,M_1)$ is $G$-equivariantly quasi-isomorphic to a chain complex of rational $G$-modules.
\end{lemma}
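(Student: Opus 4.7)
The plan is to induct on the iterated construction of $M_0$ as an equivariantly perfect module. The base case is $M_0 = A$ itself, together with its character twists and shifts. For this, the Yoneda evaluation map
\begin{equation*}
\mathit{hom}_{A^{\mathit{mod}}}(A,M_1) \longrightarrow M_1, \qquad \phi \longmapsto \phi^1(e_A),
\end{equation*}
is an equivariant quasi-isomorphism (this is the usual Yoneda-type statement for strictly unital modules, and it is $G$-equivariant because the unit $e_A$ is fixed by $G$, being the unique unit of $A$); since $M_1$ is rational by hypothesis, this gives the desired rational model. Character twists $A \otimes \chi$ and shifts $A[k]$ only modify the target side by a corresponding twist or shift of $M_1$, which remains rational.

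For the inductive step I have to show closure of the class under mapping cones of equivariant closed degree-zero morphisms, and under equivariant direct summands. The decisive algebraic input is that $G = \C^*$ is reductive, so every rational $G$-module decomposes canonically into its weight spaces, and a chain complex of rational $G$-modules is the same datum as a $\Z$-indexed family of ordinary chain complexes of $\C$-vector spaces. Consequently the full subcategory of the equivariant derived category spanned by objects with a rational model is closed under mapping cones: if $f: M_0'' \to M_0'$ is a closed equivariant morphism of degree zero with $M_0'', M_0'$ in our class, the short exact sequence of equivariant chain complexes
\begin{equation*}
0 \longrightarrow \mathit{hom}(M_0',M_1) \longrightarrow \mathit{hom}(\mathrm{Cone}(f),M_1) \longrightarrow \mathit{hom}(M_0'',M_1)[-1] \longrightarrow 0
\end{equation*}
yields an exact triangle whose outer terms admit rational models $C', C''$. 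Working weight by weight in the derived category of $\C$-vector spaces, the induced morphism in the equivariant derived category is realized by an actual equivariant chain map $C' \to C''[-1]$; the mapping cone of this chain map is then a rational model for the middle term. The case of equivariant direct summands is handled similarly: an equivariant idempotent acting on a rational complex can again be represented by a chain-level idempotent weight by weight, and its image is a rational model for the summand.

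The main subtlety (rather than a genuine obstacle) will be the transfer of morphisms and idempotents between the given equivariant chain complexes and their rational replacements. Reductivity of $\C^*$ is exactly what makes this tractable, since it reduces all such transfers to lifting maps and idempotents in the derived category of $\C$-vector spaces weight by weight, where every chain complex is both K-projective and K-injective and standard homological algebra applies. Once the bookkeeping of weights has been set up, the induction is essentially formal, and the rationality of $M_1$ feeds in only at the base case.
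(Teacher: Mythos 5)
Your base case is fine (the evaluation $\phi\mapsto\phi^1(e_A)$ is indeed equivariant, since the strict unit is $G$-fixed), but the closure steps contain a genuine gap: the weight-by-weight reasoning applies only to complexes of \emph{rational} $G$-modules, whereas the whole point of the lemma is that $\mathit{hom}_{A^{\mathit{mod}}}(M_0',M_1)$ is \emph{not} rational (it involves a direct product), so it has no weight decomposition. In your cone step, the morphism you want to realize as an equivariant chain map $C'\to C''$ is only defined after inverting the previously constructed equivariant quasi-isomorphisms, i.e.\ it lives in the derived category of all (not necessarily rational) $G$-representations, and morphisms there between rational complexes are not computed weight by weight: the inclusion of rational complexes into that ambient category is not full. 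For instance $z\mapsto \log|z|$ is a nonzero element of $\mathrm{Hom}_{\mathrm{groups}}(\C^*,\C)\subset \mathrm{Ext}^1(\C,\C)$ in the category of all representations, and the cone of the corresponding morphism between two copies of the trivial module is not quasi-isomorphic to any complex of rational modules, even though its cohomology is rational. So your assertion that ``the full subcategory spanned by objects with a rational model is closed under mapping cones'' is false for arbitrary morphisms of the ambient equivariant derived category; what actually has to be proved is that the \emph{particular} transferred morphisms are realized by equivariant chain maps between the rational models, and the weight-by-weight argument cannot deliver that. The same problem undercuts your summand step: the equivariant idempotent acts on the non-rational complex $\mathit{hom}_{A^{\mathit{mod}}}(C,M_1)$, not on its rational model, and moving it across a mere equivariant quasi-isomorphism is precisely the transfer you have not justified.

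The gap is fixable, and the fixes are essentially what the paper does. For everything built by twists, shifts and cones, avoid transfers altogether: for an equivariant twisted complex $C$ over character-twisted, shifted copies of $A$, evaluation at the units gives in one step an equivariant quasi-isomorphism from $\mathit{hom}_{A^{\mathit{mod}}}(C,M_1)$ onto a complex whose underlying space is a finite direct sum of twists and shifts of $M_1$, hence rational; this is the paper's ``elementary'' case. (Alternatively, you could repair your induction by carrying equivariant \emph{homotopy equivalences} rather than quasi-isomorphisms -- the Yoneda evaluation has an explicit equivariant homotopy inverse thanks to strict unitality -- after which transfers of maps and idempotents are legitimate.) For the direct-summand step the paper does not split the idempotent weight by weight; it invokes the fact that the derived category of rational $G$-modules admits countable direct sums and hence is closed under homotopy retracts (the L{\"u}ck--Ranicki telescope argument), which is exactly the device needed once the retraction data have been moved to the rational side. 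Your semisimplicity observation about rational $\C^*$-modules is true and simplifies the very last splitting, but it does not address the transfer across non-rational complexes, which is where the actual content of the lemma lies.
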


\begin{proof}
This is elementary if $M_0$ can be constructed as an equivariant twisted complex, which means without taking a direct summand. The general case follows from the fact that the derived category of rational $G$-modules admits countable direct sums, hence is closed under homotopy retracts \cite[Proposition 2.2]{lueck-ranicki87}.
\end{proof}

%Unfortunately, due to similar technical difficulties as before, it is not clear that a perfect module carrying a strict $G$-action is always %equivariantly perfect.
%
%
% coboundary is $(m,a_d,\dots,a_1) -> \mu^{d+1}(z(m),a_d,\dots,a_1) - z(\mu^{d+1}(m,a_d,\dots,a_1))

\subsection{Strict actions}
Instead of asking for $G$ to act on $M$ linearly, one can allow higher order terms. What one then wants is, for each $g \in G$, a homomorphism
\begin{equation}
\rho^1(g) \in \mathit{hom}^0_{A^{\mathit{mod}}}(M,g^*M).
\end{equation}
There is a {\em unitality} condition, which says that this should be the identity if $g = e$. We also need a {\em cocycle} condition which is formulated in terms of pullbacks, namely
\begin{equation} \label{eq:1-cocycle-condition}
\mu^2_{A^{\mathit{mod}}}(g_1^*\rho^1(g_2),\rho^1(g_1)) = \rho^1(g_2g_1) \in \mathit{hom}^0_{A^{\mathit{mod}}}(M,g_1^*g_2^*M).
\end{equation}

It may be worth while to spell this out a little. The condition that $\rho^1(g)$ should be a module homomorphism, meaning that $\mu^1_{A^{\mathit{mod}}}(\rho^1(g)) = 0$, yields an infinite sequence of equations
\begin{equation} \label{eq:cocycle1}
\begin{aligned}
 & \sum_i (-1)^{|a_{i+1}| + \cdots + |a_d| + |m| + d-i} \mu_M^{i+1}(\rho^{1,d+1-i}(g,m,a_d,\dots,a_{i+1}),g(a_i),\dots,g(a_1)) \\
 + & \sum_i (-1)^{|a_{i+1}| + \cdots + |a_d| + |m| + d-i} \rho^{1,i+1}(g,\mu_M^{d-i+1}(m,a_d,\dots,a_{i+1}),a_i,\dots,a_1) \\
 + & \sum_{i,j} (-1)^{|a_{i+1}| + \cdots + |a_d| + |m| + d-i} \rho^{1,d+2-j}(g,m,a_d,\dots,a_{i+j+1}, \\[-1em]
& \qquad \qquad \qquad \qquad \qquad \qquad \qquad \qquad \qquad \qquad \mu_A^j(a_{i+j},\dots,a_{i+1}),a_i,\dots,a_1) =  0,
\end{aligned}
\end{equation}
of which the two simplest ones are
\begin{align} 
\label{eq:l2} & \mu_M^1(\rho^{1,1}(g,m)) + \rho^{1,1}(g,\mu_M^1(m)) = 0, \\
\label{eq:l4}
 & \mu_M^1(\rho^{1,2}(g,m,a)) + (-1)^{|a|-1} \rho^{1,2}(g,\mu_M^1(m),a) + \rho^{1,2}(g,m,\mu^1_A(a)) \\ \notag &
 \qquad \qquad \qquad \qquad +
 (-1)^{|a|-1} \mu_M^2(\rho^{1,1}(g,m),g(a)) + \rho^{1,1}(g,\mu_M^2(m,a)) = 0.
\end{align}
\eqref{eq:l2} implies that for each $g$, the map
\begin{equation} \label{eq:strict-map}
m \longmapsto (-1)^{|m|} \rho^{1,1}(g,m)
\end{equation}
is an endomorphism of $M$ as a chain complex. \eqref{eq:l4} says that the map on cohomology induced by \eqref{eq:strict-map} is a homomorphism from the $H(A)$-module $H(M)$ to the pullback module $g^*H(M)$. The other requirement is \eqref{eq:1-cocycle-condition}, which yields
\begin{equation} \label{eq:cocycle2}
\begin{aligned}
& \sum_i (-1)^{|a_{i+1}| + \cdots + |a_d| + |m| + d-i}\rho^{1,i+1}(g_2,\rho^{1,d+1-i}(g_1,m,a_d,\dots,a_{i+1}),g_1(a_i),\dots,g_1(a_1))
 \\[-1em]
& \qquad \qquad \qquad \qquad \qquad \qquad \qquad \qquad \qquad \qquad \qquad - \; \rho^{1,d+1}(g_2g_1,m,a_d,\dots,a_1) = 0.
\end{aligned}
\end{equation}
The simplest of these equations is
\begin{equation} \label{eq:strict-composition}
(-1)^{|m|}\rho^{1,1}(g_2,\rho^{1,1}(g_1,m)) - \rho^{1,1}(g_2g_1,m) = 0,
\end{equation}
which says that the maps \eqref{eq:strict-map} are strictly compatible with the group structure of $G$. On the cohomology level, the outcome is that $H(M)$ is an equivariant $H(A)$-module in the classical sense.

So far, we have treated $G$ as a discrete group. To take its algebraic nature into account, we further impose a {\em rationality} condition, namely that the maps $\rho^{1,d+1}(g)$ for varying $g$ should be specializations of a single linear map
\begin{equation} \label{eq:phi-maps-1}
\rho^{1,d+1}: M \otimes A^{\otimes d} \longrightarrow \C[G] \otimes M[-d].
\end{equation}
Rationality becomes more natural from a geometric viewpoint. Spelling out \eqref{eq:adjoint-family} yields
\begin{equation} \label{eq:r-equals-1}
\mathit{hom}_{A^{\mathit{mod}}/G}(\C[G] \otimes M,N) \iso \prod_d \mathit{Hom}_\C(M \otimes A[1]^{\otimes d}, \C[G] \otimes M),
\end{equation}
hence can think of \eqref{eq:phi-maps-1} as a single element
\begin{equation} \label{eq:rho-1}
\rho^1 \in \mathit{hom}_{A^{\mathit{mod}}/G}(\C[G] \otimes M,N),
\end{equation}
which then satisfies $\mu^1_{A^{\mathit{mod}}/G}(\rho^1) = 0$.
%This is an actual homomorphism of families, meaning that $\mu^1_{A^{\mathit{mod}}/G}(\rho^1) = 0$. This gives an equivalent but %more uniform way of expressing the fact that each $\rho^1(g)$ is a module homomorphism.

To formulate the cocycle condition in a similar way, it is convenient to introduce some higher-dimensional families related to $N$ (even if that makes the notation somewhat more cumbersome). Let $G^r$ be the product of $r$ copies of $G$, so $\C[G^r] = \C[G]^{\otimes r}$. These products come with projection and multiplication maps
\begin{equation}
\begin{aligned}
& p_{j,i}: G^r \longrightarrow G^{r-j+i}, \quad
p_{j,i}(g_r,\dots,g_1) = (g_r,\dots,g_{j+1},g_{i-1},\dots,g_1), \\
& m_{j,i}: G^r \longrightarrow G^{r-j+i+1}, \quad
m_{j,i}(g_r,\dots,g_1) = (g_r,\dots,g_jg_{j-1} \cdots g_{i+1}g_i,g_{i-1},\dots,g_1).
\end{aligned}
\end{equation}
Let $N^r$ be the pullback of the orbit family by the total multiplication map $m_{r,1}: G^r \rightarrow G$ (this includes the case $r = 1$, where $N^1 = N$; and the degenerate case $r = 0$, where $N^0 = M$ considered as a family over a point). Equivalently, let $\gamma_k^*$ be the automorphism of $A^{\mathit{mod}}/G^r$ which acts by $g_k^*$ on the fibre over $(g_r,\dots,g_1)$. Then $N^r$ is the image of the trivial family $p_{r,1}^*N^0 = \C[G^r] \otimes M$ under $\gamma_1^*\cdots \gamma_r^*$. Generalizing the $r = 1$ case from \eqref{eq:r-equals-1}, one has an isomorphism of graded $\C[G^r]$-modules
\begin{equation} \label{eq:family-hom}
%\begin{aligned}
\mathit{hom}_{A^{\mathit{mod}}/G^r}(p_{r,1}^* N^0,N^r) 
%& = \prod_d \mathit{Hom}_{\C[G^r]}(\C[G^r] \otimes M \otimes A^{\otimes d}, \C[G^r] \otimes M[-d]) \\
\iso \prod_d \mathit{Hom}_\C(M \otimes A[1]^{\otimes d}, \C[G]^{\otimes r} \otimes M),
%\end{aligned}
\end{equation}
and can then rewrite \eqref{eq:1-cocycle-condition} as an equation lying in the $r = 2$ case of that space,
\begin{equation} \label{eq:uniform-1-cocycle}
m_{2,1}^*\rho^1 = \mu^2_{A^{\mathit{mod}}/G^2}(\gamma_1^* p_{1,1}^* \rho^1, p_{2,2}^* \rho^1) \in
\mathit{hom}_{A^{\mathit{mod}}/G^2}(p_{2,1}^*N^0,N^2).
\end{equation}
The $m^*$ and $p^*$ are geometric pullbacks, which change the parameter space of a family (hence the underlying graded vector space), while $\gamma_1^*$ affects only the $A_\infty$-module structure. Let's summarize the discussion so far:

\begin{definition}
A {\em strict $G$-action} on an $A$-module $M$ is given by a family of maps $\rho^1(g)$ satisfying the appropriate unitality, cocycle and rationality conditions. Equivalently, it is given by a homomorphism of families of modules $\rho^1$ as in \eqref{eq:rho-1}, whose restriction to the fibre at $e$ is the identity map, and such that \eqref{eq:uniform-1-cocycle} holds.
\end{definition}

Given a naive $G$-action on $M$, one can define a strict $G$-action by setting $\rho^{1,1}(g,m) = (-1)^{|m|} gm$ and $\rho^{1,d+1}(g,m,a_d,\dots,a_1) = 0$ for $d>0$. In converse direction one has:

\begin{lemma}
A strict group action on $M$ induces a naive group action on the quasi-isomorphic module $M \otimes_A A$. \qed
\end{lemma}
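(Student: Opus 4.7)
The plan is to construct the naive $G$-action on $M \otimes_A A$ by an explicit formula in which the higher components $\rho^{1,d+1}$ of the strict action absorb an initial segment of the bar string, while the $G$-action on $A$ acts on what remains. Concretely, on the summand $M \otimes A[1]^{\otimes l} \otimes A$ I would set
\[
g \cdot (m \otimes \bar{a}_l \otimes \cdots \otimes \bar{a}_1 \otimes a) = \sum_{i=0}^l (-1)^{\star_i}\, \rho^{1,l-i+1}(g; m, \bar{a}_l, \ldots, \bar{a}_{i+1}) \otimes g(\bar{a}_i) \otimes \cdots \otimes g(\bar{a}_1) \otimes g(a),
\]
where $\rho^{1,l-i+1}(g; \cdot)$ denotes the fibre over $g \in G$ of the family homomorphism $\rho^{1,l-i+1}$, and the $\star_i$ are Koszul signs to be determined. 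The sum is finite for each $l$ and is degree-preserving. Morally, $M \otimes_A A$ is a ``free'' resolution of $M$ in which the higher homotopies of a strict action can be placed into genuine linear positions.

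Having written down the formula, I would verify four things in sequence. First, rationality: the formula lifts to a $\C[G]$-linear coaction $M \otimes_A A \to \C[G] \otimes (M \otimes_A A)$ because each $\rho^{1,d+1}$ has target $\C[G] \otimes M$ and the $G$-action on $A$ is rational; each graded piece of $M \otimes_A A$ is thereby exhausted by finite-dimensional subrepresentations. Second, unitality: the condition that $\rho^1$ restricts to the identity at $e$ ensures that only the $i=l$ term survives at $g = e$, reproducing the identity map up to the sign from \eqref{eq:cohomology}. Third, compatibility with the $A_\infty$-module operations: comparing $g \cdot \mu^{d+1}_{M \otimes_A A}(-, a_d, \ldots, a_1)$ with $\mu^{d+1}_{M \otimes_A A}(g \cdot {-}, g(a_d), \ldots, g(a_1))$ and expanding both by \eqref{eq:tensor-product} produces a signed sum in which every term has a partner, and the pairwise cancellation is precisely the cocycle identity $\mu^1_{A^{\mathit{mod}}/G}(\rho^1) = 0$ combined with the $G$-equivariance of the $\mu_A^j$. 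Fourth, the group law $g_2 \cdot (g_1 \cdot x) = (g_2 g_1) \cdot x$: iterating the formula and collecting the resulting doubly-indexed sum gives the expansion of the two sides of \eqref{eq:uniform-1-cocycle} evaluated at the point $(g_2, g_1) \in G^2$.

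The main obstacle is not conceptual but combinatorial, namely pinning down the signs $\star_i$ so that all four verifications hold simultaneously with the sign conventions of the paper. This is a Koszul exercise of the same nature as the sign calculations already carried out in \eqref{eq:tensor-product} and in \eqref{eq:cocycle1}--\eqref{eq:cocycle2}: once the signs are aligned, each verification is a direct rearrangement of the defining identities of a strict $G$-action, with no further input required.
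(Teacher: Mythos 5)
Your proposal is correct and is essentially the paper's own argument: the paper's (one-sentence) proof likewise uses the maps $g^*(M \otimes_A A) \rightarrow M \otimes_A A$ induced by $\rho^1(g)$ together with the $G$-action on the tensor factors from $A$, which is exactly your explicit formula, with unitality, the strict cocycle condition, closedness of $\rho^1$, rationality of the family, and equivariance of the $\mu_A^j$ supplying the verifications just as you outline.
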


This is quite straightforward to prove: the $\rho^1(g)$ and the $G$-action on $A$ induce linear maps $g^*(M \otimes_A A) \rightarrow M \otimes_A A$, and these satisfy all the conditions for a naive group action. Unfortunately, the outcome of this discussion is that strict actions are not fundamentally more general than naive actions. Indeed, we have mentioned them mainly because they represent a natural stepping-stone towards another notion, which we will introduce next.

\subsection{Weak actions}
The previous discussion suggests another, more substantial, generalization of the notion of group action on an $A_\infty$-module, which is to require all the relevant conditions to hold only on the level of the cohomological category $H^0(A^{\mathit{mod}})$. More precisely:

\begin{definition}
A {\em weak $G$-action} on $M$ is given by a homomorphism \eqref{eq:rho-1} of families of $A_\infty$-modules (assumed to be closed with respect to $\mu^1_{A^{\mathit{mod}}/G}$ as before), whose restriction to the fibre at $e \in G$ represents the identity in $H^0(\mathit{hom}_{A^{\mathit{mod}}}(M,M))$, and such that the equality \eqref{eq:uniform-1-cocycle} holds in $H^0(\mathit{hom}_{A^{\mathit{mod}}/G^2}(p_{2,1}^*N^0,N^2))$.
\end{definition}

This means that there is another map $\rho^2 \in \mathit{hom}_{A^{\mathit{mod}}/G^2}^{-1}(p_{2,1}^*N^0,N^2)$ such that
\begin{equation} \label{eq:2-cocycle-short}
\mu^1_{A^{\mathit{mod}}/G^2}(\rho^2) \\
+ \mu^2_{A^{\mathit{mod}}/G^2}(\gamma_1^* p_{1,1}^* \rho^1, p_{2,2}^* \rho^1) \\
- m_{2,1}^*\rho^1 = 0.
\end{equation}
When written out explicitly in terms of \eqref{eq:family-hom} for $r = 2$, the components of $\rho^2$ are maps
\begin{equation}
\rho^{2,d+1}: M \otimes A^{\otimes d} \longrightarrow \C[G]^{\otimes 2} \otimes M[-d-1],
\end{equation}
and \eqref{eq:2-cocycle-short} says that
\begin{equation} \label{eq:2-cocycle}
\begin{aligned}
&  \sum_i (-1)^{|a_{i+1}| + \cdots + |a_d| + |m| + d-i} \mu_M^{i+1}(\rho^{2,d+1-i}(g_2,g_1,m,a_d,\dots,a_{i+1}),g_2g_1(a_i),\dots,g_2g_1(a_1)) \\
 + & \sum_i (-1)^{|a_{i+1}| + \cdots + |a_d| + |m| + d-i} \rho^{2,i+1}(g_2,g_1,\mu_M^{d-i+1}(m,a_d,\dots,a_{i+1}),a_i,\dots,a_1) \\
 + & \sum_{i,j} (-1)^{|a_{i+1}| + \cdots + |a_d| + |m| + d-i} \rho^{2,d+2-j}(g_2,g_1,m,a_d,\dots,a_{i+j+1}, \mu_A^j(a_{i+j},\dots,a_{i+1}),a_i,\dots,a_1) \\ 
+ & \sum_i (-1)^{|a_{i+1}| + \cdots + |a_d| + |m| + d-i} \rho^{1,i+1}(g_2,\rho^{1,d+1-i}(g_1,m,a_d,\dots,a_{i+1}),g_1(a_i),\dots,g_1(a_1))
 \\
- & \rho^{1,d+1}(g_2g_1,m,a_d,\dots,a_1) = 0. \\
\end{aligned}
\end{equation}
The simplest of these equations is
\begin{equation}
\begin{aligned}
\label{eq:l3}
 & \mu_M^1(\rho^{2,1}(g_2,g_1,m)) + \rho^{2,1}(g_2,g_1,\mu_M^1(m)) \\ & \notag \qquad \qquad \qquad \qquad +
 \rho^{1,1}(g_2,\rho^{1,1}(g_1,m)) - \rho^{1,1}(g_2g_1,m) = 0.
\end{aligned}
\end{equation}
This means that the chain maps \eqref{eq:strict-map} are compatible with the group structure of $G$ only up to chain homotopies, which are given by $\rho^{2,1}$. Of course, on the level of cohomology the outcome is still that $H(M)$ is an equivariant $H(A)$-module in the standard sense.

We need to further discuss the implications of having a weak $G$-action, since these are maybe not as obvious as for the previously discussed notions of group action. Specializing to the fibre over $g \in G$ yields $\rho^1(g) \in \mathit{hom}_{A^{\mathit{mod}}}^0(M,g^*M)$, and these satisfy the analogue of \eqref{eq:1-cocycle-condition} on the cohomology level. In particular, each $\rho^1(g)$ is an isomorphism in the category $H^0(A^{\mathit{mod}})$. As a consequence, we get an induced $G$-action (in the ordinary sense of linear action) on the space $H^*(\mathit{hom}_{A^{\mathit{mod}}}(M,M))$. This is defined by filling in the diagram (up to chain homotopy)
\begin{equation} \label{eq:endo-representation}
\xymatrix{
\mathit{hom}_{A^{\mathit{mod}}}(M,M) \ar[rrr]^{\mu^2_{A^{\mathit{mod}}}(\rho^1(g),\cdot)} \ar@{-->}[d] 
&&&
\mathit{hom}_{A^{\mathit{mod}}}(M,g^*M) 
\\
\mathit{hom}_{A^{\mathit{mod}}}(M,M) \ar[rrr]^-{g^*}
&&&
\mathit{hom}_{A^{\mathit{mod}}}(g^*M,g^*M).
\ar[u]_-{\mu^2_{A^{\mathit{mod}}}(\cdot,\rho^1(g))}
}
\end{equation}
We can get slightly stronger versions of these statements by working uniformly over $G$, which means in terms of families.

\begin{lemma} \label{th:iso-family}
A weak $G$-action on $M$ yields an isomorphism $\C[G] \otimes M \rightarrow N$ in the category $H^0(A^{\mathit{mod}}/G)$.
\end{lemma}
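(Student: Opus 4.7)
The plan is to construct an explicit candidate inverse to $\rho^1$ by exploiting the inversion map on $G$, and then verify both composition relations by pulling back the chain-level cocycle identity along suitable diagonals. Let $\iota: G \to G$ denote inversion, $\iota(g) = g^{-1}$. Pulling $\rho^1$ back along $\iota$ yields a morphism whose source is still the trivial family $\C[G]\otimes M$ and whose target $\iota^*N$ has fibre $(g^{-1})^*M$ over $g$. Applying the autoequivalence $\gamma^*$ of $A^{\mathit{mod}}/G$ (which acts on the fibre over $g$ by $g^*$) to source and target, and using the tautological identifications $\gamma^*(\C[G]\otimes M)=N$ together with $g^*(g^{-1})^*M=M$ (so that $\gamma^*\iota^*N=\C[G]\otimes M$), I obtain $\eta := \gamma^*\iota^*\rho^1: N \to \C[G]\otimes M$, whose fibre at $g$ is the morphism $g^*\rho^1(g^{-1}): g^*M \to M$.

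To verify $\mu^2(\eta,\rho^1)\htp\mathit{id}_{\C[G]\otimes M}$ in cohomology, I pull the chain-level identity \eqref{eq:2-cocycle-short} back along $\Delta: G \to G^2$, $\Delta(g)=(g^{-1},g)$. Since $m_{2,1}\circ\Delta$ is constant equal to $e$, the pulled-back right-hand side $\Delta^*m_{2,1}^*\rho^1$ reduces to $\rho^1|_e$, which represents $\mathit{id}_{\C[G]\otimes M}$ by unitality. The relation $p_{2,2}\circ\Delta=\mathit{id}_G$ gives $\Delta^*p_{2,2}^*\rho^1=\rho^1$; and $p_{1,1}\circ\Delta=\iota$, combined with the residual $\gamma_1^*$ (whose specialisation to the image of $\Delta$ acts on the fibre at $g$ by $g^*$), identifies $\Delta^*\gamma_1^*p_{1,1}^*\rho^1$ with $\eta$. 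The pulled-back cocycle therefore reads $\mu^2(\eta,\rho^1)-\mathit{id}_{\C[G]\otimes M} = \mu^1(\Delta^*\rho^2 + \text{correction})$ in $\mathit{hom}_{A^{\mathit{mod}}/G}(\C[G]\otimes M,\C[G]\otimes M)$, which is the desired relation in $H^0$.

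For the opposite relation $\mu^2(\rho^1,\eta)\htp\mathit{id}_N$, I repeat the pullback with the swapped diagonal $\Delta':G\to G^2$, $\Delta'(g)=(g,g^{-1})$, obtaining a relation $\mu^2(\alpha,\beta)\htp\mathit{id}_{\C[G]\otimes M}$, where $\alpha: \iota^*N\to\C[G]\otimes M$ has fibre $(g^{-1})^*\rho^1(g)$ and $\beta: \C[G]\otimes M\to\iota^*N$ has fibre $\rho^1(g^{-1})$. I then apply the dg autoequivalence $\gamma^*$ to this whole identity. Since $\gamma^*$ commutes with $\mu^1$ and $\mu^2$ and carries identities to identities, and since the cancellation $g^*(g^{-1})^*=\mathit{id}$ yields $\gamma^*\alpha=\rho^1$, $\gamma^*\beta=\eta$, and $\gamma^*\mathit{id}_{\C[G]\otimes M}=\mathit{id}_N$, the transported identity is exactly $\mu^2(\rho^1,\eta)\htp\mathit{id}_N$. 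Combined with the previous paragraph, this exhibits $\eta$ as a two-sided inverse of $\rho^1$ in $H^0(A^{\mathit{mod}}/G)$.

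I expect the main obstacle to be bookkeeping: one must carefully distinguish the base-change operations $p_{j,i}^*$, $m_{j,i}^*$, $\iota^*$, $\Delta^*$ (which modify the underlying $\C[G]$-module) from the autoequivalence $\gamma^*$ (which modifies only the $A_\infty$-module structure), and verify that the cancellations $g^*(g^{-1})^*=\mathit{id}$ do identify the resulting families as claimed. Once these identifications are pinned down, the argument is a formal consequence of the weak-action cocycle \eqref{eq:2-cocycle-short}, the unitality condition on $\rho^1|_e$, and the strict functoriality of $\gamma^*$ on the dg category $A^{\mathit{mod}}/G$.
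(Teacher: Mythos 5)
Your proposal is correct and follows essentially the same route as the paper: pull the weak cocycle identity \eqref{eq:2-cocycle-short} back along the antidiagonal $g \mapsto (g^{-1},g)$, use unitality to replace the constant term $\rho^1(e)$ by the identity, and identify the resulting left inverse with $\gamma^*\iota^*\rho^1$. The only (cosmetic) difference is in the second half: where the paper obtains the right inverse by simply noting that $i^*$ and $\gamma^*$ are automorphisms (so the left-inverse relation can be transported), you perform a second explicit pullback along the swapped antidiagonal and then apply $\gamma^*$, which is the same mechanism spelled out in more detail.
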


\begin{proof}
Take \eqref{eq:2-cocycle} and pull the equality back via $(\mathit{id},i): G \rightarrow G^2$, where $i$ is the inverse map $i(g) = g^{-1}$. The result is
\begin{equation}
\mu^1_{A^{\mathit{mod}}/G}((\mathit{id},i)^*\rho^2) \\
+ \mu^2_{A^{\mathit{mod}}/G}(\gamma^* i^* \rho^1, \rho^1) \\
- \mathit{id} = 0 \in \mathit{hom}_{A^{\mathit{mod}}/G}(\C[G] \otimes M,\C[G] \otimes M).
\end{equation}
It follows that $\rho^1$ has a left inverse in $H^0(A^{\mathit{mod}}/G)$. Since $i^*$ and $\gamma^*$ are isomorphisms, it has a right inverse as well.
\end{proof}

\begin{lemma} \label{th:rational-endo}
If $M$ is a perfect module and carries a weak $G$-action, the induced $G$-action on the graded space $H^*(\mathit{hom}_{A^{\mathit{mod}}}(M,M))$ is rational.
\end{lemma}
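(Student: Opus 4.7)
The strategy is to repackage the fibrewise action diagram \eqref{eq:endo-representation} as a single $\C[G]$-linear map, produced from the weak action together with compactness of the perfect module $M$, and then observe that this map is automatically a comodule structure.

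\emph{First step.} Invoke Lemma \ref{th:iso-family} to choose a chain-level homomorphism $\sigma^1 \in \mathit{hom}_{A^{\mathit{mod}}/G}^0(N,\C[G]\otimes M)$ which represents the two-sided inverse of $\rho^1$ in $H^0(A^{\mathit{mod}}/G)$. Since $M$ is perfect, the map \eqref{eq:constant-hom} is a quasi-isomorphism, so we have a canonical identification
\begin{equation*}
\C[G] \otimes H^*(\mathit{hom}_{A^{\mathit{mod}}}(M,M)) \;\iso\; H^*(\mathit{hom}_{A^{\mathit{mod}}/G}(\C[G]\otimes M,\C[G]\otimes M)).
\end{equation*}
Composition with $[\rho^1]$ and $[\sigma^1]$ then gives an isomorphism of the right-hand side with $H^*(\mathit{hom}_{A^{\mathit{mod}}/G}(N,N))$.

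\emph{Second step.} Given an endomorphism $\phi\in \mathit{hom}_{A^{\mathit{mod}}}^k(M,M)$, form its constant-family version $1\otimes\phi \in \mathit{hom}_{A^{\mathit{mod}}/G}^k(\C[G]\otimes M,\C[G]\otimes M)$ and push it through the automorphism $\gamma^*$ to obtain $\gamma^*(1\otimes\phi) \in \mathit{hom}_{A^{\mathit{mod}}/G}^k(N,N)$. Define
\begin{equation*}
\Phi([\phi]) \;=\; \big[\mu^2_{A^{\mathit{mod}}/G}\!\big(\sigma^1,\, \mu^2_{A^{\mathit{mod}}/G}(\gamma^*(1\otimes\phi),\rho^1)\big)\big],
\end{equation*}
viewed, via the isomorphism of the first step, as an element of $\C[G]\otimes H^*(\mathit{hom}_{A^{\mathit{mod}}}(M,M))$. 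A brief check using the fact that $[\sigma^1]\cdot[\rho^1]=\mathrm{id}$ and $[\rho^1]\cdot[\sigma^1]=\mathrm{id}$ in $H^0(A^{\mathit{mod}}/G)$ shows that $\Phi$ is well defined and independent of the choice of representative $\sigma^1$.

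\emph{Third step.} Specialize $\Phi$ at a point $g\in G$, i.e.\ tensor with the evaluation $\C[G]\to\C,\ f\mapsto f(g)$. The fibre of $N$ at $g$ is $g^*M$, the fibre of $\rho^1$ at $g$ is the homomorphism $\rho^1(g):M\to g^*M$ used in \eqref{eq:endo-representation}, and the fibre of $\gamma^*(1\otimes\phi)$ at $g$ is $g^*\phi$. Consequently the specialization of $\Phi([\phi])$ at $g$ is precisely the composite appearing in \eqref{eq:endo-representation}, i.e.\ $g\cdot[\phi]$. Writing $\Phi([\phi]) = \sum_{i=1}^{k} f_i \otimes [\phi_i]$ in the uncompleted tensor product, we therefore obtain the finite expansion
\begin{equation*}
g\cdot[\phi] \;=\; \sum_{i=1}^{k} f_i(g)\,[\phi_i],
\end{equation*}
which exhibits the $G$-orbit of $[\phi]$ as lying inside the finite-dimensional subspace spanned by $[\phi_1],\dots,[\phi_k]$, with polynomial matrix entries. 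Hence the action is rational. In fact $\Phi$ is the required $\C[G]$-comodule structure, which one can verify by pulling back the cocycle identity \eqref{eq:uniform-1-cocycle} to $G^2$ and reading off the coassociativity relation.

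\emph{Main obstacle.} The substance of the argument is forcing the conjugation-type formula $g\cdot[\phi] = \rho^1(g)^{-1}\,g^*[\phi]\,\rho^1(g)$ to live at the level of a single morphism of families rather than a family of morphisms. The perfectness of $M$ is used precisely here, through the quasi-isomorphism \eqref{eq:constant-hom}: without it, the target would only be the (completed) direct product, and the coaction would land in $\C[[G]]\otimes H^*$ rather than $\C[G]\otimes H^*$, losing rationality. Keeping track of the distinction between $\gamma^*$ (an automorphism of $A^{\mathit{mod}}/G$) and geometric pullback along right multiplication, as emphasized in Remark \ref{th:2-pullbacks}, is where one must be careful.
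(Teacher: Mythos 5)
Your argument is correct and is essentially the paper's own proof: the paper likewise passes to the version of \eqref{eq:endo-representation} with variable $g$, uses Lemma \ref{th:iso-family} (composition with $\rho^1$ is a homotopy equivalence) and the isomorphism $\gamma^*$ together with the quasi-isomorphism \eqref{eq:constant-hom} for perfect $M$ to identify the relevant cohomology with $H^*(\mathit{hom}_{A^{\mathit{mod}}}(M,M)) \otimes \C[G]$, and then reads off the $\C[G]$-comodule structure, exactly as in your third step. The only differences are cosmetic: you choose an explicit chain-level inverse $\sigma^1$ and write the coaction as an explicit conjugation (your fibrewise formula is the induced action up to the harmless interchange $g \leftrightarrow g^{-1}$ relative to the dashed arrow in \eqref{eq:endo-representation}, which does not affect rationality for $G = \C^*$), whereas the paper fills in the square up to chain homotopy and restricts to constants.
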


\begin{proof}
We start with a version of \eqref{eq:endo-representation} with variable $g$, namely
\begin{equation} \label{eq:endo-representation-2}
\xymatrix{
\mathit{hom}_{A^{\mathit{mod}}/G}(\C[G] \otimes M, \C[G] \otimes M) \ar[rrr]^-{\mu^2_{A^{\mathit{mod}}/G}(\rho^1,\cdot)} \ar@{-->}[d] &&&
\mathit{hom}_{A^{\mathit{mod}}/G}(\C[G] \otimes M, N) 
\\
\mathit{hom}_{A^{\mathit{mod}}/G}(\C[G] \otimes M, \C[G] \otimes M) \ar[rrr]^-{\gamma^*}
&&&
\mathit{hom}_{A^{\mathit{mod}}/G}(N,N). \ar[u]_-{\mu^2_{A^{\mathit{mod}}/G}(\cdot,\rho^1)}
}
\end{equation}
Lemma \ref{th:iso-family} shows that the composition with $\rho^1$ on on either side is a chain homotopy equivalence, and of course $\gamma^*$ is an isomorphism. If $M$ is perfect, the cohomology groups in the left column of \eqref{eq:endo-representation-2} are isomorphic to $H(\mathit{hom}_{A^{\mathit{mod}}}(M,M)) \otimes \C[G]$. By restricting to constants, we get a $\C$-linear map
\begin{equation}
H(\mathit{hom}_{A^{\mathit{mod}}}(M,M)) \longrightarrow H(\mathit{hom}_{A^{\mathit{mod}}}(M,M)) \otimes \C[G],
\end{equation}
which is the $\C[G]$-comodule structure corresponding to the desired rational representation.
\end{proof}

Lemma \ref{th:iso-family} again implies that if a module admits a weak action, then its Killing class must vanish. More interestingly, one can obtain a partial converse. Take an $A$-module $M$ such that $\mathit{Ki}_M = 0$. Choose a bounding cochain $\alpha$ for the underlying cocycle $\mathit{ki}_M$, and associate to it a connection $\nabla_N$ as in \eqref{eq:symmetric-connection}. Equip $N^2 = m_{2,1}^*N$ with the pullback connection $m_{2,1}^*\nabla_N$, and all the constant families of modules with their trivial connections.

\begin{lemma}
Suppose that 
\begin{equation}
[\rho^1] \in H^0(\mathit{hom}_{A^{\mathit{mod}}/G}(\C[G] \otimes M,N)) = H^0(\mathit{hom}_{A^{\mathit{mod}}/G}(p_{1,1}^*N^0,N^1))
\end{equation}
is covariantly constant for the induced connection. Then, the class
\begin{equation} \label{eq:m2-obstruction-class}
[m_{2,1}^*\rho^1 - \mu^2_{A^{\mathit{mod}}/G^2}(\gamma_1^* p_{1,1}^* \rho^1, p_{2,2}^* \rho^1)] \in H^0(\mathit{hom}_{A^{\mathit{mod}}/G^2}(p_{2,1}^*N^0,N^2))
\end{equation}
has the property that its restriction to each slice $G \times \{g_1\} \subset G^2$ is covariantly constant.
\end{lemma}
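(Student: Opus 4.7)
The plan is to pull $\Theta$ back to a slice $G\times\{g_1\}\subset G^2$ via $\ell_{g_1}(g_2)=(g_2,g_1)$ and verify that each of the two terms in the resulting difference gives a covariantly constant class. Using $m_{2,1}\circ\ell_{g_1}=r_{g_1}$, $p_{1,1}\circ\ell_{g_1}=\mathrm{id}_G$, and $p_{2,2}\circ\ell_{g_1}=\mathrm{const}_{g_1}$, one finds
\begin{equation*}
\ell_{g_1}^*\Theta \;=\; r_{g_1}^*\rho^1 \;-\; \mu^2_{A^{\mathit{mod}}/G}\bigl(g_1^*\rho^1,\,\rho^1(g_1)\otimes 1\bigr),
\end{equation*}
understood as a difference of morphisms from $\C[G]\otimes M$ to $r_{g_1}^*N$, where the second term is the composition through the intermediate family $\C[G]\otimes g_1^*M=(p_{2,2}^*N)|_{\text{slice}}$. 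The target connection $(m_{2,1}^*\nabla_N)|_{\text{slice}}$ is $r_{g_1}^*\nabla_N$, which by Remark~\ref{th:2-pullbacks} is identified with $g_1^*\nabla_N$ on $g_1^*N$ through the natural isomorphism; the source carries the trivial connection; and the intermediate family inherits the trivial connection as well, since pullback of $\nabla_N$ along a constant map is trivial.

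Now I will treat the two summands separately. For the first summand $r_{g_1}^*\rho^1$, the general principle that pulling back a morphism of families-with-connection along a map of bases commutes with the induced connection on the hom complex gives $D[r_{g_1}^*\rho^1]=r_{g_1}^*D[\rho^1]=0$ by hypothesis. For the composition, I apply the Leibniz rule \eqref{eq:product-of-connections}. The inner factor $\rho^1(g_1)\otimes 1$ is a constant family of morphisms between two trivially-connected families, so its covariant derivative vanishes by inspection. The outer factor $g_1^*\rho^1$ comes from applying the endofunctor $g_1^*$ of $A^{\mathit{mod}}/G$, which acts on a connection by the rule $\pabla_{g_1^*K}^{d+1}(k,a_d,\dots,a_1)=\pabla_K^{d+1}(k,g_1(a_d),\dots,g_1(a_1))$; this manifestly preserves vanishing of the deformation cocycle (hence sends valid connections to valid connections) and sends $\nabla_N$ to $g_1^*\nabla_N$. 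Consequently $D[g_1^*\rho^1]=g_1^*D[\rho^1]=0$, and the composition is therefore covariantly constant. Combining, both summands of $\ell_{g_1}^*\Theta$ are covariantly constant, and so is their difference.

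The anticipated obstacle is precisely to reconcile these identifications of connections on the slice, since the apparently natural ``composite'' connection $\gamma_1^*p_{1,1}^*\nabla_N$ on $N^2$ differs from the prescribed $m_{2,1}^*\nabla_N$ by a term proportional to $dz_1/z_1$; in particular $\gamma_1^*p_{1,1}^*\nabla_N$ is not a valid connection globally on $G^2$, as its deformation cocycle is essentially $\mathit{ki}_M\otimes dz_1/z_1$. That discrepancy vanishes on each slice $G\times\{g_1\}$ because $dz_1=0$ there, which is what makes the slice-by-slice formulation of the lemma correct, and also explains why one cannot expect $[\Theta]$ itself to be covariantly constant on the full parameter space $G^2$.
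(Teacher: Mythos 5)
Your argument is correct and is essentially the paper's own proof: restrict to the slice, observe that the $m_{2,1}^*\rho^1$ term is covariantly constant because it is a pullback, factor the second term as the composition of $g_1^*\rho^1$ (covariantly constant for $g_1^*\nabla_N$, reconciled with the slice connection $r_{g_1}^*\nabla_N$ via Remark \ref{th:2-pullbacks}) with the constant morphism $\mathit{id}_{\C[G]}\otimes\rho^1(g_1)$, and conclude by the Leibniz rule \eqref{eq:product-of-connections}. Your closing remark on why covariant constancy can only be expected slice-by-slice is accurate side commentary not needed for the proof.
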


\begin{proof}
By definition of the connections involved, $m_{2,1}^*\rho^1$ is covariantly constant on the whole of $G^2$. Consider the other summand and restrict it to $G \times \{g_1\}$. The restriction can be written as the composition of the following two terms:
\begin{equation} \label{eq:parts-of-covariantly-constant}
\begin{aligned}
& [g_1^*\rho^1] \in H^0(\mathit{hom}_{A^{\mathit{mod}}/G}(\C[G] \otimes g_1^*M, g_1^*N)), \\
& [\mathit{id}_{\C[G]} \otimes \rho^1(g_1)] \in H^0(\mathit{hom}_{A^{\mathit{mod}}/G}(\C[G] \otimes M, \C[G] \otimes g_1^*M)).
\end{aligned}
\end{equation}
The second of these terms is clearly covariantly constant (it is constant in the remaining variable $g = g_2$, and all the connections involved are trivial). The first is covariantly constant if we equip $N^2 | G \times \{g_1\} \iso g_1^*N$ with the connection $g_1^*\nabla_N$. That's not exactly how we described the process -- we used the geometric pullback $r_{g_1}^*\nabla_N = r_{g_1^{-1},*}\nabla_N$ instead -- but the result coincides with $g_1^*\nabla_N$, as discussed in Remark \ref{th:2-pullbacks}.
We now know that both expressions \eqref{eq:parts-of-covariantly-constant} are covariantly constant, hence so is their product by \eqref{eq:product-of-connections}.
\end{proof}

\begin{lemma} \label{th:cov-const-2}
Suppose that $A$ is weakly proper, and that $M$ is a perfect $A$-module. Take a $[\rho^1]$ as in the previous Lemma, with the additional assumption that its value at $g = e$ should be the identity. Then $[\rho^1]$ is automatically a weak action.
\end{lemma}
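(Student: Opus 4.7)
The plan is to verify the cocycle condition up to coboundary, i.e.\ to show that the class
$[\mathit{obs}] := [m_{2,1}^*\rho^1 - \mu^2_{A^{\mathit{mod}}/G^2}(\gamma_1^* p_{1,1}^* \rho^1, p_{2,2}^* \rho^1)]$
in $H^0(\mathit{hom}_{A^{\mathit{mod}}/G^2}(p_{2,1}^*N^0, N^2))$ vanishes. The previous lemma already does the bulk of the work: on each slice $G \times \{g_1\}$, this class is covariantly constant for the induced connection. So it suffices to (i) locate one point on each slice where $[\mathit{obs}]$ vanishes, and (ii) propagate this pointwise vanishing along the flat connection to deduce vanishing of the class over all of $G^2$.

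For step (i), I would evaluate at $(g_2, g_1) = (e, g_1)$. Since $m_{2,1}(e, g_1) = g_1$, the first summand of $\mathit{obs}$ restricts there to $\rho^1(g_1)$. For the second summand, $\gamma_1^* p_{1,1}^* \rho^1$ at $(e, g_1)$ equals $g_1^* \rho^1(e)$, which represents $\mathit{id}_{g_1^* M}$ in cohomology by the hypothesis that $\rho^1(e)$ is the identity, while $p_{2,2}^*\rho^1$ at $(e, g_1)$ is $\rho^1(g_1)$; composing via $\mu^2$ therefore gives a representative of $\rho^1(g_1)$. The two summands cancel in cohomology, so $[\mathit{obs}]$ vanishes along the slice $\{e\} \times G$.

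For step (ii), I would exploit the finiteness hypotheses. Since $M$ is perfect, $p_{2,1}^*N^0 = \C[G^2] \otimes M$ is a perfect family. On the other side, $N$ is a perfect family over $G$ (as the orbit family of a perfect module), and weak properness of $A$ forces $H^*(N)$, and therefore $H^*(N^2) \iso \C[G^2] \otimes_{\C[G]} H^*(N)$, to be finitely generated over $\C[G^2]$ in each degree. Lemma~\ref{th:finite-generation} then makes the relevant hom-cohomology module finitely generated over $\C[G^2]$. Restricted to a slice $G \times \{g_1\}$, a covariantly constant element in a finitely generated $\C[G]$-module with connection satisfies an algebraic first-order linear ODE in the slice coordinate; any algebraic solution on $\mathbb{G}_m$ that vanishes at a single point vanishes identically, by analytic continuation along the connected variety $\mathbb{G}_m$. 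Combining this with step (i) shows that $[\mathit{obs}]$ vanishes on every slice $G \times \{g_1\}$, hence pointwise at every closed point of $G^2$; since the hom-cohomology is a finitely generated module over the Jacobson ring $\C[G^2]$, pointwise vanishing forces $[\mathit{obs}] = 0$.

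The main technical obstacle is ensuring the finite generation of the hom-cohomology module, which requires correctly combining perfectness of $M$, perfectness of the orbit family $N$, and weak properness of $A$; once that is secured, the argument is a clean amalgam of the previous lemma (covariant constancy), the normalization $\rho^1(e) = \mathit{id}$, and ODE uniqueness on $\mathbb{G}_m$.
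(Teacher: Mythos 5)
Your proposal follows the paper's own proof essentially step for step: finite generation of the hom-cohomology in each degree via weak properness, perfectness of $M$ and Lemma \ref{th:finite-generation}; covariant constancy of the obstruction class on each slice $G \times \{g_1\}$ from the preceding lemma; cancellation of the two terms at $(e,g_1)$ using $[\rho^1(e)] = \mathrm{id}$; and propagation of this vanishing over the connected group. The only place where your justification as written is too weak is the final globalization: for a finitely generated module over a Jacobson ring, vanishing of an element in every fibre $P/\mathfrak{m}P$ does \emph{not} force the element to be zero (torsion modules give counterexamples), and similarly your ``algebraic ODE'' argument on a slice tacitly assumes local freeness so that a flat section is determined by its value at one point. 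Both points are repaired exactly as in the paper: the relevant hom-cohomology modules carry connections (the trivial connection on the constant family, and $m_{2,1}^*\nabla_N$ on $N^2$), and a finitely generated module with connection over a smooth affine variety is automatically projective, after which covariant constancy plus vanishing at a single point of a connected base, and then fibrewise vanishing for a projective module, do give $[\mathit{obs}] = 0$ as you intend.
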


\begin{proof}
Weak properness implies that $H(N^r)$ is a finitely generated $\C[G^r]$-module in each degree. The constant family $p_{r,1}^*N^0$ is perfect, hence (by Lemma \ref{th:finite-generation}) $H^*(\mathit{hom}_{A^{\mathit{mod}}/G^r}(p_{r,1}^*N^0,N^r))$ is again finitely generated in each degree. Moreover, it admits a connection, hence is projective. We want to show that \eqref{eq:m2-obstruction-class} vanishes. It suffices to show that its restriction to each slice $G \times \{g_1\}$ is zero. We know that each such restriction is covariantly constant, and that its value at $(e,g_1)$ is
\begin{equation}
[\rho^1(g_1) - \mu^2_{A^{\mathit{mod}}}(g_1^*\rho^1(e),\rho^1(g_1))] = 0. 
\end{equation}
These two facts together imply the necessary vanishing result.
\end{proof}

Here is one concrete consequence of the discussion so far:

\begin{lemma} \label{th:exceptional}
Suppose that $A$ is weakly proper, and that $M$ is a perfect $A$-module which is rigid and simple, meaning that it satisfies the analogue of \eqref{eq:exceptional} in $A^{\mathit{mod}}$. Then $M$ can be equipped with a weak $G$-action.
\end{lemma}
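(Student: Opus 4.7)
The plan is to produce a class $[\rho^1] \in H^0(\mathit{hom}_{A^{\mathit{mod}}/G}(\C[G] \otimes M, N))$ to which Lemma \ref{th:cov-const-2} applies.

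First, the Killing class $\mathit{Ki}_M$ lives in $H^1(\mathit{hom}_{A^{\mathit{mod}}}(M, \g^* \otimes M)) \iso H^1(\mathit{hom}_{A^{\mathit{mod}}}(M,M))$ (via the canonical identification $\g \iso \C$), which vanishes by rigidity. Choose a cochain $\alpha$ with $\mu^1_{A^{\mathit{mod}}}(\alpha) = \mathit{ki}_M$, and let $\nabla_N$ be the associated connection on $N$ from \eqref{eq:symmetric-connection}. Together with the trivial connection on $\C[G] \otimes M$, this induces a connection $D$ on $\mathbf{P} := H^0(\mathit{hom}_{A^{\mathit{mod}}/G}(\C[G] \otimes M,N))$. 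Since $M$, and hence $N$, is perfect, Lemma \ref{th:finite-generation} gives that $\mathbf{P}$ is finitely generated over $\C[G]$, and since $G$ is one-dimensional, $D$ is automatically flat. Hence $\mathbf{P}$ is projective and (as $\C[G]$ is a PID) free. Its rank equals the fibre dimension, which at $g = e$ is $\dim H^0(\mathit{hom}_{A^{\mathit{mod}}}(M,M)) = 1$ by simplicity---using base change for the hom functor out of the perfect module $M$---so $\mathbf{P}$ is free of rank one.

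The crux is Remark \ref{th:2-pullbacks}: $\nabla_N$ is equivariant for right translation on $G$ combined with the $h^*$ operation on modules. This equivariance transports to $(\mathbf{P}, D)$ and forces $D$ to be left-invariant on $G = \C^*$, so $D = d + c \cdot (dz/z)$ for a single scalar $c \in \C$ (where $z$ is the coordinate). Such a connection admits an algebraic flat section iff $c \in \Z$, and the flat sections then span a one-dimensional $\C$-vector space. We achieve $c = 0$ by modifying $\alpha$: adding the cocycle $\lambda \cdot (dz/z) \otimes \mathrm{id}_M \in \mathit{hom}_{A^{\mathit{mod}}}^0(M,\g^* \otimes M)$ for an appropriate $\lambda \in \C$ preserves $\mu^1_{A^{\mathit{mod}}}(\alpha) = \mathit{ki}_M$ and shifts $c$ by $-\lambda$. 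With $c = 0$, normalize the flat section of $\mathbf{P}$ so that its value at $e$ is $[\mathrm{id}_M]$; the resulting class $[\rho^1]$ meets the hypothesis of Lemma \ref{th:cov-const-2}, which delivers the weak $G$-action.

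The main obstacle is the left-invariance of $D$: one must carefully unpack the symmetry of $\nabla_N$ from Remark \ref{th:2-pullbacks} to confirm that it descends to the hom-space in the required way. Without that restriction on the shape of $D$, the one-parameter freedom in modifying $\alpha$ would not suffice, since $D$ could a priori have contributions away from $dz/z$ that cannot be absorbed by shifts in $\alpha$.
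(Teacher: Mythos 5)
Your skeleton is the same as the paper's: vanishing of the Killing class by rigidity, the induced connection on $\mathbf{P} = H^0(\mathit{hom}_{A^{\mathit{mod}}/G}(\C[G] \otimes M,N))$, finite generation plus the connection giving projectivity, rank one by simplicity, freeness over $\C[G]$, the $\C$-worth of freedom from changing $\alpha$ by multiples of $\mathit{id}_M$, and finally feeding a covariantly constant section normalized at $e$ into Lemma \ref{th:cov-const-2}. Up to that point the proposal matches the paper step for step (the paper invokes weak properness where you invoke perfectness of $N$, but that is cosmetic).

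The genuine gap is the step you yourself flag as the crux: the claim that Remark \ref{th:2-pullbacks} ``transports to $(\mathbf{P},D)$'' and forces $D$ to be left-invariant, i.e.\ of the form $d + c\,dz/z$ with constant $c$. The symmetry in Remark \ref{th:2-pullbacks} is an isomorphism $r_{h^{-1},*}N \iso h^*N$ intertwining the two induced connections; but $\mathbf{P}$ is a hom-module whose \emph{source} is the constant family $\C[G] \otimes M$, and under the combined operation (geometric right translation plus module pullback $h^*$) that source is carried to the constant family $\C[G] \otimes h^*M$, not back to $\C[G] \otimes M$. So what one gets is an identification of $r_h^*(\mathbf{P},D)$ with $H^0(\mathit{hom}_{A^{\mathit{mod}}/G}(\C[G]\otimes h^*M, h^*N))$ with its induced connection, and to compare this with $(\mathbf{P},D)$ one would need isomorphisms $M \iso h^*M$ in $H^0(A^{\mathit{mod}})$, coherently in $h$ -- which is essentially the weak action one is trying to construct, so the argument as proposed is circular (a nonzero element of the one-dimensional fibre $H^0(\mathit{hom}_{A^{\mathit{mod}}}(M,h^*M))$ is not known to be invertible at this stage). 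Without the invariance, your final step collapses: the $\alpha$-freedom only shifts the connection form by constant multiples of $dz/z$ (and gauge changes shift it by $\Z\cdot dz/z$), so it cannot remove non-invariant terms, and the conclusion ``$c=0$, hence a flat algebraic section'' does not follow. The paper handles this point differently: it does not assert invariance of $D$, but adjusts $\alpha$ so that the \emph{monodromy} of the induced connection on the free rank-one module becomes trivial (this is where surjectivity of $\exp$ over $\C$ enters) and then takes the resulting covariantly constant section; your normalization $c=0$ is in any case stronger than needed, since $c \in \Z$ already yields an algebraic flat section. So the proposal is the right strategy, but its pivotal invariance claim is unproven and the route offered for it (Remark \ref{th:2-pullbacks} alone) does not suffice.
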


\begin{proof}
$\mathit{Ki}_M$ necessarily vanishes, which means that $H^0(\mathit{hom}_{A^{\mathit{mod}}/G}(\C[G] \otimes M,N))$ carries a connection, hence is locally free of rank $1$. It is therefore necessarily free as a $\C[G]$-module. When choosing a coboundary $\alpha$ for $\mathit{ki}_M$, we have the freedom of adding any scalar multiple of $\mathit{id}_M$ to it, and that changes the induced connection on $H^0(\mathit{hom}_{A^{\mathit{mod}}/G}(\C[G] \otimes M,N))$ by the corresponding scalar factor. By adjusting that factor, one can achieve that the monodromy is trivial (here we use the surjectivity of the exponential map, hence the fact that our ground field is $\C$), and then there is a covariantly constant section as required by Lemma \ref{th:cov-const-2}.
\end{proof}

\section{Homotopy actions}

\subsection{Definition}
Starting from the notion of weak action, one is naturally led to introduce higher homotopies following the familiar simplicial pattern. We will now explain the resulting notion. As before, $A$ is an $A_\infty$-algebra carrying a rational action of $G = \C^*$.

Let $M$ be an $A$-module. A {\em homotopy action} of $G$ on $M$ is given by maps
\begin{equation} \label{eq:h-action-1}
\rho^{r,d+1}(g_r,\dots,g_1): M \otimes A^{\otimes d} \longrightarrow M[1-d-r]
\end{equation}
for $r \geq 1$, $g_r,\dots,g_1 \in G$, and $d \geq 0$, which satisfy conditions which we will now gradually introduce. Most importantly, the {\em cocycle condition} says that
\begin{equation} \label{eq:homotopy-cocycle}
\begin{aligned}
&  \sum_i (-1)^{|a_{i+1}| + \cdots + |a_d| + |m| + d-i} \mu_M^{i+1}(\rho^{r,d+1-i}(g_r,\dots,g_1,m,a_d,\dots,a_{i+1}),
\\[-1em] & \qquad \qquad \qquad \qquad \qquad \qquad \qquad \qquad \qquad \qquad \qquad
g_r \cdots g_1(a_i),\dots,g_r \cdots g_1(a_1)) \\
 + & \sum_i (-1)^{|a_{i+1}| + \cdots + |a_d| + |m| + d-i} \rho^{r,i+1}(g_r,\dots,g_1,\mu_M^{d-i+1}(m,a_d,\dots,a_{i+1}),a_i,\dots,a_1) \\
 + & \sum_{i,j} (-1)^{|a_{i+1}| + \cdots + |a_d| + |m| + d-i} \rho^{r,d+2-j}(g_r,\dots,g_1,m,a_d,\dots,a_{i+j+1}, 
\\[-1em] & \qquad \qquad \qquad \qquad \qquad \qquad \qquad \qquad \qquad \qquad \qquad 
\mu_A^j(a_{i+j},\dots,a_{i+1}),a_i,\dots,a_1) \\ 
+ & \sum_{q,i} (-1)^{|a_{i+1}| + \cdots + |a_d| + |m| + d-i} \rho^{r-q,i+1}(g_r,\dots,g_{q+1}, 
\\[-1em] & \qquad \qquad \qquad \qquad \rho^{q,d+1-i}(g_q,\dots,g_1,m,a_d,\dots,a_{i+1}),g_q \cdots g_1(a_i),\dots,g_q \cdots g_1(a_1))
 \\
+ & \sum_q (-1)^q \rho^{r-1,d+1}(g_r,\dots,g_{q+1}g_q,\dots,g_i,m,a_d,\dots,a_1) = 0. \\
\end{aligned}
\end{equation}
The $r = 1$ and $r = 2$ cases simply reproduce \eqref{eq:cocycle1} and \eqref{eq:2-cocycle}. For each $(g_r,\dots,g_1)$, one can consider the collection $\{\rho^{r,d+1}(g_r,\dots,g_1,\dots)\}_{d \geq 0}$ as an element
\begin{equation}
\rho^r(g_r,\dots,g_1) \in \mathit{hom}_{A^{\mathit{mod}}}^{1-r}(M, g_1^* \dots g_r^* M),
\end{equation}
and then \eqref{eq:homotopy-cocycle} can be rewritten in the same way as in \eqref{eq:1-cocycle-condition}:
\begin{equation} \label{eq:entire-cocycle-short}
\begin{aligned}
& \mu^1_{A^{\mathit{mod}}}(\rho^r(g_r,\dots,g_1)) \\ + & \sum_q \mu^2_{A^{\mathit{mod}}}(g_1^* \dots g_q^* \rho^{r-q}(g_r,\dots,g_{q+1}), \rho^q(g_q,\dots,g_1) ) \\
+ & \sum_q (-1)^q \rho^{r-1}(g_r,\dots,g_{q+1} g_q,\dots,g_1) = 0.
\end{aligned}
\end{equation}
The {\em unitality} condition is the same as for weak actions, saying that $[\rho^1(e)]$ is the identity in $H^0(A^{\mathit{mod}})$. The {\em rationality} condition says that if we fix $d$, the $\rho^{r,d+1}(g_r,\dots,g_1,\dots)$ for different $(g_r,\dots,g_1)$ should be specializations of a single map
\begin{equation} \label{eq:rho-maps}
\rho^{r,d+1}: M \otimes A^{\otimes d} \longrightarrow \C[G^r] \otimes M[1-d-r].
\end{equation}
Using \eqref{eq:family-hom} this can be thought of as a morphism in the $A_\infty$-category $A^{\mathit{mod}}/G^r$, 
\begin{equation}
\rho^r \in \mathit{hom}_{A^{\mathit{mod}}/G^r}^{1-r}(p_{r,1}^*N^0,N^r),
\end{equation}
and then \eqref{eq:entire-cocycle-short} can be written in analogy to \eqref{eq:2-cocycle-short} as
\begin{equation} \label{eq:entire-cocycle-2}
%\begin{aligned}
%& 
\mu^1_{A^{\mathit{mod}}/G^r}(\rho^r) 
+ \sum_q \mu^2_{A^{\mathit{mod}}/G^r}(\gamma_1^* \dots \gamma_q^* p_{q,1}^*\rho^{r-q},  p_{r,q+1}^{*} \rho^q)
%\\
% & 
+ \sum_q (-1)^q m_{q+1,q}^*\rho^{r-1} = 0.
%\end{aligned}
\end{equation}
We summarize the discussion:

\begin{definition}
A {\em homotopy action} of $G$ on $M$ is a collection of maps \eqref{eq:rho-maps} satisfying the unitality and cocycle conditions.
\end{definition}

\subsection{Turning homotopy actions into naive ones}
Given a homotopy $G$-action on $M$, we can construct another $A_\infty$-module $M^{\mathit{naive}}$ whose underlying graded vector space is
\begin{equation} \label{eq:strictification}
M^{\mathit{naive}} = \prod_{r \geq 0} \C[G]^{\otimes r+1} \otimes M[-r].
\end{equation}
If we think of elements as collections of maps $\beta^{r+1}: G^{r+1} \rightarrow M$, then the differential is defined to be
\begin{equation}
\begin{aligned}
\mu^1_{M^{\mathit{naive}}}(\beta)^{r+1}(g_{r+1},\dots,g_1) & =
 \mu^1_M(\beta^{r+1}(g_{r+1},\dots,g_1)) \\ & +
\sum_q \rho^{r+1-q,1}(g_{r+1},\dots,g_{q+1},\beta^q(g_q,\dots,g_1)) \\ & +
\sum_q (-1)^{q+|\beta|} \beta^r(g_{r+1},\dots,g_{q+1}g_q,\dots,g_1),
\end{aligned}
\end{equation}
and the other $A_\infty$-module structures as
\begin{equation}
\begin{aligned}
& \mu^{d+1}_{M^{\mathit{naive}}}(\beta,a_d,\dots,a_1)^{r+1}(g_{r+1},\dots,g_1) =
\\ & \qquad
= \mu_M^{d+1}(\beta^{r+1}(g_{r+1},\dots,g_1),g_{r+1}\dots g_1(a_d),\dots,g_{r+1}\dots g_1(a_1))
\\ & \qquad
+ \sum_q \rho^{r+1-q,d}(g_{r+1},\dots,g_{q+1},\beta^q(g_q,\dots,g_1),g_q \cdots g_1(a_d),\dots, g_q\cdots g_1(a_1)).
%& \mu^{d+1}_{M^{\mathit{strict}}}(f_1 \otimes \cdots \otimes f_r \otimes m, a_d, \dots, a_1) = \\
%& \quad \sum_s f_1 \otimes \cdots \otimes f_r \otimes \gamma^{s,d+1}(m,a_d,\dots,a_1) \\
%& \quad + f_1 \otimes \cdots \otimes f_r \otimes \mu_M^{d+1}(m,a_d,\dots,a_1).
\end{aligned}
\end{equation}
Take the (complete decreasing) filtration of $M^{\mathit{naive}}$ by $r$. The associated spectral sequence has $E_1^{r*} = \C[G]^{\otimes r+1} \otimes H(M)$. The next differential takes a given representative $[\beta^r]$ to
\begin{equation}
\begin{aligned}
(g_{r+1},\dots,g_1) \longmapsto&  (-1)^{|\beta(g_r,\dots,g_1)|+r-1} \rho^{1,1}(g_{r+1},\beta^r(g_r,\dots,g_1)) \\ & \quad + \sum_q (-1)^q \beta^r(g_r,\dots,g_{q+1}g_q,\dots,g_1),
\end{aligned}
\end{equation}
hence is precisely the differential \eqref{eq:delta-bar} for the $G$-action on $H(M)$ induced by $\rho^{1,1}$. It follows that the only nonvanishing term on the next page is $E_2^{0*} \iso H(M)$. In fact, by applying Lemma \ref{th:bar} and a spectral sequence comparison theorem, one sees that the module homomorphism $\phi \in \mathit{hom}_{A^{\mathit{mod}}}(M,M^{\mathit{naive}})$ given by
\begin{equation}
\phi^{d+1}(m,a_d,\dots,a_1)(g_{r+1},\dots,g_1) = \rho^{r+1,d+1}(g_{r+1},\dots,g_1,m,a_d,\dots,a_1)
\end{equation}
is a quasi-isomorphism. One also has a quasi-isomorphism in inverse direction, which is linear, and takes $\beta$ to $m = \beta^1(e)$.

$M^{\mathit{naive}}$ carries a $G$-action on the underlying graded vector space, mapping $\beta(g_{r+1},\dots,g_1)$ to $\beta(g_{r+1},\dots,g_1g)$, which is compatible with all the $A_\infty$-operations. Because of the direct product in \eqref{eq:strictification}, this is not in general a rational representation. However, if $M$ is bounded below as a graded vector space, then each graded piece of $M^{\mathit{naive}}$ is a finite product, hence the problem does not arise:

\begin{lemma} \label{th:strictification}
Suppose that $M$ is bounded below. Then, if it carries a homotopy $G$-action, there is a quasi-isomorphic module which carries a naive $G$-action. \qed
\end{lemma}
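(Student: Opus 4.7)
The plan is to take as the quasi-isomorphic module the explicit construction $M^{\mathit{naive}}$ built in the preceding paragraphs; the bulk of the work is already done, and only a rationality check remains. Recall that the module map $\phi : M \to M^{\mathit{naive}}$ with components
\[
\phi^{d+1}(m, a_d, \dots, a_1)(g_{r+1}, \dots, g_1) = \rho^{r+1, d+1}(g_{r+1}, \dots, g_1, m, a_d, \dots, a_1)
\]
has already been shown, via Lemma \ref{th:bar} and a spectral-sequence comparison, to be a quasi-isomorphism. Thus it suffices to promote the $G$-action $(g \cdot \beta)(g_{r+1}, \dots, g_1) = \beta(g_{r+1}, \dots, g_1 g)$ on $M^{\mathit{naive}}$ to a genuine naive $G$-action.

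Next I would verify that this action commutes with all $\mu^{d+1}_{M^{\mathit{naive}}}$. In the defining formulas, the variables $g_i$ enter only through the total products $g_{r+1} \cdots g_1$ and $g_q \cdots g_1$ acting on the $a_i \in A$; right-translating $g_1 \mapsto g_1 g$ replaces these by $g_{r+1} \cdots g_1 g$ and $g_q \cdots g_1 g$, and because $G \to \mathrm{Aut}(A)$ is a group homomorphism, this is exactly the action of $g$ on the output of the $A_\infty$-operation. A similar transparent check handles $\mu^1_{M^{\mathit{naive}}}$, using that the last summand in its definition shifts indices among the $g_i$ but leaves the rightmost variable (the only one touched by right translation) in place.

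The remaining — and really the only nontrivial — step is rationality. With the convention $(M[-r])^i = M^{i-r}$, the degree-$k$ component of the $r$-th factor of \eqref{eq:strictification} is $\C[G]^{\otimes r+1} \otimes M^{k-r}$. Under the hypothesis that $M^j = 0$ for $j \ll 0$, only finitely many values of $r$ contribute to each fixed $k$, so the a priori direct product collapses to a finite direct sum. On each summand, the $G$-action affects only the rightmost $\C[G]$ tensor factor by right translation, and is trivial on the other factors and on $M^{k-r}$; since $\C[G]$ under right translation is the regular representation, each $(M^{\mathit{naive}})^k$ is a rational $G$-module.

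The main (and essentially only) point to watch is the boundedness step in the last paragraph: without it one would merely have an action on a product of rational modules, which is typically not rational, and this is precisely the failure mode that the hypothesis is designed to rule out. Everything else — the construction of $M^{\mathit{naive}}$, its $A_\infty$-module structure, the quasi-isomorphism $\phi$, and the compatibility of the candidate $G$-action with the $A_\infty$-operations — is either routine or already spelled out in the surrounding text.
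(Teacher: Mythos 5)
Your proposal is correct and is essentially the paper's own argument (the \qed{} on the lemma refers to the preceding construction): you take the same module $M^{\mathit{naive}}$, use the already established quasi-isomorphism $\phi$, check that right translation in the last group variable commutes with all $\mu^{d+1}_{M^{\mathit{naive}}}$, and use bounded-belowness exactly as the paper does, to make each graded piece a finite product and hence a rational representation. (A purely cosmetic point: with the paper's identification of $\C[G]^{\otimes r+1}\otimes M$ with functions of $(g_{r+1},\dots,g_1)$, translation in $g_1$ acts on the \emph{leftmost} $\C[G]$ factor rather than the rightmost, which does not affect the rationality argument.)
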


%The quasi-isomorphism $M^{\mathit{naive}} \rightarrow M$ induces a map $H(M^{\mathit{naive}}) \rightarrow H(M)$, which is %equivariant if we equip $H(M)$ with the $G$-action induced by $\rho^{1,1}$. More generally, given any two modules $M_0$ and $M_1$, %where the first one carries a naive $G$-action and the second one a homotopy $G$-action, the isomorphism
%\begin{equation} \label{eq:homs-fixed}
%H(\mathit{hom}_{A^{\mathit{mod}}}(M_0,M_1^{\mathit{naive}})) \iso H(\mathit{hom}_{A^{\mathit{mod}}}(M_0,M_1))
%\end{equation}
%is $G$-equivariant, where the action on the right hand side is induced by the weak action $\rho^1$. Now assume that $M_1$ satisfies %the boundedness assumption from Lemma \ref{th:strictification}, and that $M_0$ is equivariantly perfect (compare the statement of %Lemma \ref{th:rational-hom}). In that case, the fixed part of the cohomology on the left hand side of \eqref{eq:homs-fixed} equals the %cohomology of the fixed subcomplex, so we would have
%\begin{equation}
%H(\mathit{hom}_{A^{\mathit{mod}}}(M_0,M_1^{\mathit{naive}})^G) \iso H(\mathit{hom}_{A^{\mathit{mod}}}(M_0,M_1))^G.
%\end{equation}
%
%\begin{proof} generalization!!!
%... apply Perturbation Lemma to find a rational subcomplex equivariantly ...
%\end{proof}

\subsection{Obstruction theory}
We will now consider the issue of extending a weak action to a homotopy action. Fix some $M$ and some $s \geq 3$. Suppose that we are given maps $\rho^r$ for all $r < s$, satisfying the unitality condition as well as those equations \eqref{eq:entire-cocycle-2} in which no higher order maps appear. Temporarily set $\rho^s = 0$. Then, the failure of the order $s$ equation \eqref{eq:entire-cocycle-2} to hold is measured by a collection of maps
\begin{equation} \label{eq:error}
\begin{aligned}
\epsilon^s(g_s,\dots,g_1) = & \sum_q \mu^2_{A^{\mathit{mod}}}(g_1^* \dots g_q^* \rho^{r-q}(g_r,\dots,g_{q+1}), \rho^q(g_q,\dots,g_1) ) ) \\
+ & \sum_q (-1)^q \rho^{r-1}(g_r,\dots,g_{q+1} g_q,\dots,g_1) 
\in \mathit{hom}_{A^{\mathit{mod}}}^{2-s}(M, g_1^* \dots g_s^* M).
\end{aligned}
\end{equation}
A straightforward computation starting with \eqref{eq:entire-cocycle-2} shows that these are $\mu^1_{A^{mod}}$-closed, hence actual module homomorphisms. Moreover,
\begin{equation} \label{eq:obstruction-cocycle}
\begin{aligned}
& -\mu^2_{A^{\mathit{mod}}}(g_1^*\dots g_s^* \rho^1(g_{s+1}),\epsilon^s(g_s,\dots,g_1))
%\\ & \qquad \qquad
+ \mu^2_{A^{\mathit{mod}}}(g_1^*\epsilon^s(g_{s+1},\dots,g_2),\rho^1(g_1)) \\
& \qquad \qquad + \sum_q (-1)^q \epsilon^s(g_{s+1},\dots,g_{q+1}g_q,\dots,g_1) = \mu^1_{A^{\mathit{mod}}}(\text{\it something}).
\end{aligned}
\end{equation}
Equivalently, one can consider the class
\begin{equation} \label{eq:error2}
E^s(g_s,\dots,g_1) = (g_1^{-1} \cdots g_s^{-1})^* ([\epsilon^s(g_s,\dots,g_1))] \cdot [\rho^1(g_s \dots g_1)]^{-1} \in 
H^{2-s}(\mathit{hom}_{A^{\mathit{mod}}}(M,M)).
\end{equation}
Taking into account the sign changes \eqref{eq:cohomology} when passing to the cohomological category, composition with $[\rho^1(g_{s+1}\dots g_1)]^{-1}$ and pullback turns \eqref{eq:obstruction-cocycle} into
\begin{equation} \label{eq:error3}
\begin{aligned}
& (-1)^{s+1} g_{s+1}^* ( [\rho^1(g_{s+1})] E^s(g_s,\dots,g_1) [\rho^1(g_{s+1})]^{-1} ) + E^s(g_{s+1},\dots,g_2) \\ & \qquad \qquad + \sum_q (-1)^q E^s(g_{s+1},\dots,g_{q+1}g_q,\dots,g_1) = 0.
\end{aligned}
\end{equation}
The first term (modulo sign) is the action of $g_{s+1}$ on $E^s(g_s,\dots,g_1) \in H^{2-s}(\mathit{hom}_{A^{\mathit{mod}}}(M,M))$ as defined in \eqref{eq:endo-representation}. Considering that cohomology group as a $G$-module, we may write \eqref{eq:error3} as a group cocycle equation
\begin{equation}
\delta_{C^*(G,H^{2-s}(\mathit{hom}_{A^{\mathit{mod}}}(M,M)))}(E^s) = 0.
\end{equation}
To make this argument precise, we need to work with varying elements of $G$, which means that we consider the error term and its associated cohomology class as elements
\begin{equation}
\begin{aligned}
& \epsilon^s \in \mathit{hom}_{A^{\mathit{mod}}/G^s}^{2-s}(p_{s,1}^* N^0, N^s), \\
& E^s \in H^{2-s}(\mathit{hom}_{A^\mathit{mod}}(M, M \otimes \C[G^s])).
\end{aligned}
\end{equation}
Assume at this point that $M$ is perfect. Using \eqref{eq:compact} one can rewrite the second term above as
\begin{equation}
E^s \in H^{2-s}(\mathit{hom}_{A^{\mathit{mod}}}(M,M)) \otimes \C[G]^{\otimes s},
\end{equation}
which by \eqref{eq:error3} is a cocycle of degree $s$ in the complex $C^*(G,H^{2-s}(\mathit{hom}_{A^{\mathit{mod}}}(M,M)))$. A computation similar to the previous ones (and whose details we therefore omit) shows that, by adding a cocycle in $\mathit{hom}_{A^{\mathit{mod}}/G^{s-1}}(p_{s-1,1}^*N^0,N^{s-1})$ to $\rho^{s-1}$, one can change $E^s$ by an arbitrary coboundary, without affecting the validity of the equations \eqref{eq:entire-cocycle-2} for $r<s$.

We know from Lemma \ref{th:cohomology} that the group cochain complex is acyclic in the relevant degree. This means that by adjusting $\rho^{s-1}$, one can get $E^s$ to become zero, which means that $\epsilon^s$ is a coboundary. After that, one can choose a $\rho^s$ so that the order $s$ equation \eqref{eq:entire-cocycle-2} is satisfied. Induction then shows the following:

\begin{lemma} \label{th:extend}
Every weak $G$-action on a perfect $M$ can be extended to a homotopy $G$-action. \qed
\end{lemma}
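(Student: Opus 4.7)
The plan is to prove Lemma \ref{th:extend} by induction on $s \geq 3$, extending a weak $G$-action (which already provides $\rho^1$ and $\rho^2$ satisfying \eqref{eq:entire-cocycle-2} for $r \leq 2$) by successively constructing $\rho^s$ so that the order $s$ instance of \eqref{eq:entire-cocycle-2} holds, while preserving the equations in lower orders. The entire obstruction machinery is already set up in the paragraphs preceding the lemma, so the main task is to assemble it.

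For the inductive step, assume $\rho^1,\dots,\rho^{s-1}$ satisfy \eqref{eq:entire-cocycle-2} for $r<s$. Set $\rho^s = 0$ and form the error $\epsilon^s$ of \eqref{eq:error} together with the class $E^s$ of \eqref{eq:error2}. First I would verify, using the $A_\infty$-module equations and the inductive hypothesis, that $\mu^1_{A^{\mathit{mod}}/G^s}(\epsilon^s) = 0$ and that \eqref{eq:obstruction-cocycle} holds; this is the standard cancellation argument, parallel to the verification that the obstruction in any $A_\infty$-obstruction theory is a cocycle and descends to a closed class. Since $M$ is perfect, \eqref{eq:compact} lets us identify $H^{2-s}(\mathit{hom}_{A^{\mathit{mod}}/G^s}(p_{s,1}^*N^0, N^s))$ with $H^{2-s}(\mathit{hom}_{A^{\mathit{mod}}}(M,M)) \otimes \C[G]^{\otimes s}$, so $E^s$ lives in $C^s(G, H^{2-s}(\mathit{hom}_{A^{\mathit{mod}}}(M,M)))$; moreover \eqref{eq:error3} translates exactly into the assertion that $E^s$ is a group cocycle with respect to the $G$-action described in \eqref{eq:endo-representation}.

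The key input is then Lemma \ref{th:rational-endo}: since $M$ is perfect and carries a weak $G$-action, the induced $G$-action on $H^{2-s}(\mathit{hom}_{A^{\mathit{mod}}}(M,M))$ is rational. By Lemma \ref{th:cohomology}, group cohomology of $G = \C^*$ with rational coefficients vanishes in positive degree, so $E^s$ is a group coboundary. Next I would carry out the computation (omitted in the excerpt, but formally analogous to \eqref{eq:obstruction-cocycle}) showing that adding a $\mu^1$-cocycle $\xi \in \mathit{hom}^{3-s}_{A^{\mathit{mod}}/G^{s-1}}(p_{s-1,1}^*N^0, N^{s-1})$ to $\rho^{s-1}$ changes $E^s$ by $\delta_{C^*(G,\cdot)}$ applied to the class of $\xi$, while leaving the equations of order $<s$ untouched. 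Choosing $\xi$ so that this coboundary equals $E^s$ reduces us to the case $E^s = 0$, i.e.\ $\epsilon^s$ is a $\mu^1_{A^{\mathit{mod}}/G^s}$-coboundary; any primitive $\rho^s$ then satisfies the order $s$ equation \eqref{eq:entire-cocycle-2}.

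The main obstacle, and the reason the argument works over $G = \C^*$ rather than a general group, is the rationality of the endomorphism representation: without Lemma \ref{th:rational-endo} one would only know $E^s$ to be a cocycle in the non-rational group complex, where Lemma \ref{th:cohomology} fails. Everything else is formal cocycle manipulation in the bicomplex combining the $A_\infty$-module differential on $\mathit{hom}_{A^{\mathit{mod}}/G^\bullet}$ and the simplicial structure maps $m_{q+1,q}^*$ and $p_{q,q}^*$. The induction terminates in the sense of producing maps of all orders, though there is no finiteness on $s$; since the definition of homotopy action only requires existence of $\rho^r$ for each $r$ separately, this suffices.
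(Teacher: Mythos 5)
Your proposal is correct and follows essentially the same route as the paper, whose proof of Lemma \ref{th:extend} is precisely the obstruction-theoretic induction set up in the preceding paragraphs: form $\epsilon^s$ and $E^s$, use perfectness and \eqref{eq:compact} to place $E^s$ in $C^s(G,H^{2-s}(\mathit{hom}_{A^{\mathit{mod}}}(M,M)))$, kill it via Lemma \ref{th:cohomology} by adjusting $\rho^{s-1}$, and then choose $\rho^s$. Your explicit appeal to Lemma \ref{th:rational-endo} for rationality of the coefficient module is exactly the input the paper uses (implicitly) to make Lemma \ref{th:cohomology} applicable.
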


We can now state and prove a slightly more general version of Theorem \ref{th:main4}.

\begin{corollary} \label{th:proper}
Let $A$ be an $A_\infty$-algebra with a rational $G$-action, such that $H(A)$ is finite-dimensional in each degree (weak properness) and bounded below. Let $M$ be a perfect $A_\infty$-module over $A$ satisfying the analogue of \eqref{eq:exceptional} in $A^{\mathit{mod}}$. Then there is a quasi-isomorphic $A_\infty$-module which carries a naive $G$-action.
\end{corollary}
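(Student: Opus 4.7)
The plan is to chain together the three main results already established in the paper, with one preliminary reduction to handle the boundedness hypothesis needed by the strictification construction.

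First, since $A$ is weakly proper and $M$ is rigid and simple, Lemma \ref{th:exceptional} applies directly and produces a weak $G$-action $\rho^1$ on $M$. (The weak properness of $A$ was only used to know that the relevant endomorphism spaces of families are finitely generated and thus locally free once equipped with a connection; the hypothesis that $H(A)$ is bounded below plays no role at this stage.) Next, because $M$ is perfect, Lemma \ref{th:extend} extends $\rho^1$ to a full homotopy $G$-action $\{\rho^r\}_{r \geq 1}$: the obstruction argument of Section 6 only requires perfectness of $M$ in order to identify $H^*(\mathit{hom}_{A^{\mathit{mod}}/G^s}(p_{s,1}^* N^0, N^s))$ with $H^*(\mathit{hom}_{A^{\mathit{mod}}}(M,M)) \otimes \C[G^s]$, and the acyclicity of the group cochain complex from Lemma \ref{th:cohomology} (rationality of $G = \C^*$) then kills the successive obstructions.

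The last step is to apply Lemma \ref{th:strictification}, which converts a homotopy action into a naive action on a quasi-isomorphic module, provided the underlying graded vector space is bounded below. This is the point where the bounded-below hypothesis on $H(A)$ enters. Since $M$ is perfect, it is a summand (up to quasi-isomorphism) of an iterated mapping cone of finitely many shifted copies of the free module $A$. Because $H(A)$ is finite-dimensional in each degree and bounded below, the same holds for such an iterated cone, and hence also for $H(M)$. One may therefore replace $M$ by a quasi-isomorphic bounded-below $A_\infty$-module $M'$ (for instance its minimal model, whose underlying graded vector space is $H(M)$, or more concretely the iterated cone together with a sufficiently shifted resolution of the idempotent cutting out the summand). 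The homotopy $G$-action transports along the quasi-isomorphism $M \simeq M'$ by standard $A_\infty$-homological algebra (explicitly, one can compose with the quasi-inverse or, equivalently, rerun the weak-to-homotopy construction starting from the transported weak action).

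The main obstacle, insofar as there is one, is the passage to a bounded-below replacement while preserving the homotopy action — conceptually straightforward but fiddly to write out in coordinates. Once $M'$ is in hand with its homotopy $G$-action, Lemma \ref{th:strictification} produces $M'^{\mathit{naive}}$ quasi-isomorphic to $M'$ and hence to $M$, carrying a rational naive $G$-action compatible with the $A_\infty$-module structure, which is precisely the conclusion claimed.
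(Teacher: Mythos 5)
Your proposal is correct in substance and uses exactly the paper's chain of results: Lemma \ref{th:exceptional} (weak action), Lemma \ref{th:extend} (homotopy action), Lemma \ref{th:strictification} (naive action on a quasi-isomorphic module), with a minimal-model argument securing the bounded-below hypothesis, and your derivation that $H(M)$ is finite-dimensional in each degree and bounded below from perfectness of $M$ and the hypotheses on $H(A)$ is the right justification for that step. The one structural difference is the order of operations: the paper first replaces $A$ by a minimal model (equivariantly, via the Perturbation Lemma) and $M$ by a minimal model, and only then runs the three lemmas, so every construction takes place on a module that is already finite-dimensional in each degree and bounded below; you instead build the weak and homotopy action on the original $M$ and then propose to transport it to a bounded-below replacement $M'$. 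That transport of a full homotopy action is precisely the step you leave vague (and it is the fiddliest point in your write-up); your fallback of transporting only the weak action and rerunning Lemma \ref{th:extend} is workable, but note that it can be skipped altogether: rigidity, simplicity and perfectness are quasi-isomorphism invariant, so one may apply Lemma \ref{th:exceptional} directly to $M'$ from the start, which is exactly what the paper's ordering achieves. So your route buys nothing over the paper's and costs an extra (avoidable) transport argument, but it does correctly identify where each hypothesis --- weak properness for Lemma \ref{th:exceptional}, perfectness for Lemma \ref{th:extend}, boundedness below for Lemma \ref{th:strictification} --- enters.
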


\begin{proof}
Using standard Perturbation Lemma arguments, we can replace $A$ by a quasi-isomorphic $A_\infty$-algebra which is minimal (has vanishing differential). This works equivariantly with respect to $G$, so we will assume from now on that $A$ itself is minimal. Similarly, we can replace $M$ by a quasi-isomorphic module which is minimal, and we will assume that this has been done as well. Since $M$ is perfect, it then follows that it must be finite-dimensional in each degree and bounded below. One uses Lemma \ref{th:exceptional} to equip $M$ with a weak $G$-action; Lemma \ref{th:extend} to extend that to a homotopy $G$-action; and finally Lemma \ref{th:strictification} to convert that into a naive $G$-action on a quasi-isomorphic module.
\end{proof}

%The conditions imposed are not quite the most general ones under which our argument works (for instance, it would be enough if %$H(A)$ is bounded below and finite-dimensional in each degree), but the result as stated is sufficient for our applications.
%
%\bibliographystyle{amsplain}
%\bibliography{../../../bib/all,../../../bib/new}
\providecommand{\bysame}{\leavevmode\hbox to3em{\hrulefill}\thinspace}
\providecommand{\MR}{\relax\ifhmode\unskip\space\fi MR }
% \MRhref is called by the amsart/book/proc definition of \MR.
\providecommand{\MRhref}[2]{%
  \href{http://www.ams.org/mathscinet-getitem?mr=#1}{#2}
}
\providecommand{\href}[2]{#2}

\end{document}